\documentclass[11pt]{article}  
\usepackage{amsmath}
\usepackage{amssymb}
\usepackage{theorem}
\usepackage{euscript}
\usepackage{pstricks}
	
\usepackage{xcolor}
\usepackage[linesnumbered,ruled,vlined]{algorithm2e}

\usepackage{authblk}
\title{Another algorithm template}
\author{names}

\usepackage{algpseudocode}

\algnewcommand{\algorithmicand}{\rm{\bf \ and\ }}
\algnewcommand{\algorithmicor}{\rm{\bf \ or    \ }}
\algnewcommand{\OR}{\algorithmicor}
\algnewcommand{\AND}{\algorithmicand}
\algnewcommand{\var}{\texttt}

\SetCommentSty{mycommfont}

\SetKwInput{KwInput}{Input}                
\SetKwInput{KwOutput}{Output}              

\topmargin -0.0cm
\oddsidemargin -0.1cm
\textwidth  16.4cm 
\headheight 0.0cm
\textheight 21.3cm
\parindent  6mm
\parskip    9pt
\tolerance  1000
\newtheorem{theorem}{Theorem}[section]
\newtheorem{lemma}{Lemma}[section]
\newtheorem{corollary}{Corollary}[section]
\newtheorem{remark}{Remark}[section]
\numberwithin{equation}{section}


\newcommand{\be}{{\boldsymbol{e}}}

\newcommand{\bm}{{\boldsymbol{m}}}

\newcommand{\bs}{{\boldsymbol{s}}}

\newcommand{\bx}{{\boldsymbol{x}}}

\newcommand{\by}{{\boldsymbol{y}}}

\newcommand{\brho}{{\boldsymbol{\rho}}}
\newcommand{\bsigma}{{\boldsymbol{\sigma}}}
\newcommand{\rd}{{\rm d}} 
\newcommand{\rev}{{\rm ev}} 
 
\newcommand{\bzero}{\boldsymbol{0}}
\def\II{\mathbb I}
\def\EE{\mathbb E}

\def\RR{{\mathbb R}}

\def\NN{{\mathbb N}}

\def\II{{\mathbb I}}

\def\NN{{\mathbb N}}
\def\RR{{\mathbb R}}

\def\Vv{{\mathbb P}}
\def\UU{{\mathbb U}}
\def\Vv{{\mathcal P}}

\def\IIi{{\mathbb I}^\infty}

\def\RRi{{\mathbb R}^\infty}

\def\RRi{{\mathbb R}^\infty}

\def\UUi{{\mathbb U}^{\infty}}


\def\Ee{{\mathcal E}}

\def\Hh{{\mathcal H}}
\def\Ii{{\mathcal I}}

\def\Ll{{\mathcal L}}
\def\Oo{{\mathcal O}}

\def\Qq{{\mathcal Q}}
\def\Ss{{\mathcal S}}
\def\Tt{{\mathcal T}}

\def\Vv{{\mathcal V}}

\def\II{{\mathbb I}}

\def\NN{{\mathbb N}}
\def\RR{{\mathbb R}}

\def\FF{{\mathbb F}}

\def\supp{\operatorname{supp}}


\def\Wi{W^1_\infty(D)}

\title{\sffamily Sparse-grid polynomial interpolation approximation and integration for parametric and stochastic elliptic PDEs with lognormal inputs} 
\author{ 
	Dinh D\~ung \\
	Information Technology Institute, Vietnam National University \\
	144 Xuan Thuy, Cau Giay, Hanoi, Vietnam\\
	{\ttfamily dinhzung@gmail.com} \\
}

\date{\today}
 \tolerance 2500
\begin{document}
\maketitle

\begin{abstract}
	By combining a certain  approximation property in the spatial domain, and weighted $\ell_2$-summability  of the Hermite polynomial expansion coefficients  in the parametric domain  obtained in [M. Bachmayr, A. Cohen, R. DeVore and G. Migliorati, ESAIM Math. Model. Numer. Anal. {\bf 51}(2017), 341-363] and [M. Bachmayr, A. Cohen, D. D\~ung and C. Schwab,  SIAM J. Numer. Anal. {\bf 55}(2017), 2151-2186], we investigate  linear non-adaptive methods of fully discrete polynomial interpolation approximation as well as fully discrete weighted quadrature methods of integration for parametric and stochastic elliptic PDEs with lognormal inputs.  We  construct  such methods and prove convergence rates of the approximations by them.  The linear non-adaptive methods of fully discrete polynomial interpolation approximation  are sparse-grid collocation methods which are  certain sums taken over finite nested Smolyak-type indices sets of mixed tensor products of dyadic scale successive differences of spatial approximations of particular solvers, and of  successive differences of  their parametric Lagrange interpolating polynomials. The  Smolyak-type sparse interpolation grids in the parametric domain are constructed from the roots of Hermite polynomials or their improved modifications. Moreover, they generate in a natural way fully discrete weighted quadrature formulas for integration of the solution to parametric and stochastic elliptic PDEs and its linear functionals, and the error of the  corresponding integration can be estimated via the error in the Bochner space $L_1(\RRi,V,\gamma)$  norm of the generating methods where $\gamma$ is  the Gaussian probability measure on $\RRi$ and $V$ is the energy space. 
We also briefly  consider similar problems for parametric and stochastic elliptic PDEs with affine inputs, and  problems of non-fully discrete polynomial interpolation approximation and integration. In particular, the convergence rates of non-fully discrete polynomial interpolation approximation and integration obtained in this paper significantly improve the known ones.

\medskip
\noindent
{\bf Keywords and Phrases}: High-dimensional approximation; Parametric and stochastic elliptic PDEs; Lognormal inputs;
Collocation approximation; Fully discrete non-adaptive  polynomial interpolation approximation;  Fully discrete  non-adaptive integration.

\medskip
\noindent
{\bf Mathematics Subject Classifications (2010)}: 65C30, 65D05, 65D32, 65N15, 65N30, 65N35. 
  
\end{abstract}

\section{Introduction}

One of basic problems in Uncertainty Quantification are approximation and numerical integration for parametric and stochastic PDEs.  Since the number of parametric variables may be very large or even infinite,  they are  treated as high-dimensional or infinite-dimensional approximation problems.  
Let $D \subset {\mathbb R}^d$ be a bounded  Lipschitz domain.  Consider the diffusion elliptic equation 
\begin{equation} \label{ellip}
- {\rm div} (a\nabla u)
\ = \
f \quad \text{in} \quad D,
\quad u|_{\partial D} \ = \ 0, 
\end{equation}
for  a given fixed right-hand side $f$ and spatially variable scalar diffusion coefficient $a$.
Denote by $V:= H^1_0(D)$ the energy space and let $V' = H^{-1}(D)$ be the conjugate space of $V$. If $a$ satisfies the ellipticity assumption
\begin{equation} \nonumber
0<a_{\min} \leq a \leq a_{\max}<\infty,
\end{equation}
by the well-known Lax-Milgram lemma, for any $f \in V'$, there exists a unique solution $u \in V$ in weak form which satisfies the variational equation
\begin{equation} \nonumber
\int_{D} a\nabla u \cdot \nabla v \, \rd \bx
\ = \
\langle f , v \rangle,  \quad \forall v \in V.
\end{equation}
We consider diffusion coefficients having a parameterized form $a=a(\by)$, where $\by=(y_j)_{j \in \NN}$
is a sequence of real-valued parameters ranging in the set $\UUi$ which is either  $\RRi$ or $\IIi :=[-1,1]^\infty$. 
In this case, the  solution $u(\by)$ to the parameterized  diffusion elliptic equation 
\begin{equation} \label{p-ellip}
- {\rm div} (a(\by)\nabla u(\by))
\ = \
f \quad \text{in} \quad D,
\quad u(\by)|_{\partial D} \ = \ 0, 
\end{equation}	
	can be considered as a  map
$\by \mapsto u(\by)$ from $\UUi$ to the space $V$. The objective is to achieve numerical 
approximation of this complex map by a small number of parameters with some
guaranteed error in a given norm. Depending on the nature of the modeled object, the parameter $\by$ may be either deterministic or random.
In the present paper, we consider the  so-called lognormal case when $\UUi= \RRi$ and the diffusion coefficient $a$  is of the form
\begin{equation} \label{lognormal}
a(\by)=\exp(b(\by)), \quad b(\by)=\sum_{j = 1}^\infty y_j\psi_j,
\end{equation}
where the $y_j$ are i.i.d. standard Gaussian random variables  and $\psi_j \in L_\infty(D)$. We also briefly consider the affine case 
when $\UUi= \IIi$ and the diffusion coefficient $a$  is of the form
\begin{equation} \label{affine}
a(\by)= \bar a + \sum_{j = 1}^\infty y_j\psi_j.
\end{equation}

In order to study fully discrete approximations of the solution $u(\by)$ to the parameterized elliptic PDEs \eqref{ellip}, we assume that $f \in L_2(D)$ and $a(\by) \in W^1_\infty(D)$, and hence we obtain
 that  $u(\by)$  has the second higher regularity, i.e.,  $u(\by) \in W$ where $W$ is the space
\begin{equation} \nonumber
W:=\{v\in V\; : \; \Delta v\in L^2(D)\}
\end{equation}
equipped with the norm
\begin{equation} \nonumber
\|v\|_{W}:=\|\Delta v\|_{L^2(D)},
\end{equation}
which coincides with the Sobolev space $V\cap H^2(D)$ with equivalent norms if the domain $D$ has $C^{1,1}$ smoothness, see \cite[Theorem 2.5.1.1]{Gr85}.
Moreover, we assume that there holds the following approximation property for the spaces $V$ and $W$.

\noindent
{\bf Assumption I}  \,  
There are a sequence $(V_n)_{n \in \NN_0}$ of subspaces $V_n \subset V $ of dimension $\le m$, and a sequence $(P_n)_{n \in \NN_0}$ of linear operators from $V$ into $V_n$, 
and a number $\alpha>0$ such that
\begin{equation} \label{spatialappn}
\|P_n(w)\|_V \leq 
C\|w\|_V , \quad
\|w-P_n(w)\|_V \leq 
Cn^{-\alpha} \|w\|_W, \quad \forall n \in \NN_0, \quad \forall w \in W.
\end{equation}

 A basic role in the approximation and numerical integration for parametric and stochastic PDEs are generalized polynomial chaos (GPC) expansions for the dependence on the parametric variables.  We refer the reader to \cite{CD15,DKS13,GWZ14,SG11,NTW08} and references there for different aspects in approximation for parametric and stochastic PDEs. In \cite{CCS15}--\cite{CDS11}, based on the conditions
$\big(\|\psi_j\|_{\Wi}\big)_{j \in \NN} \in \ell_p(\NN)$ for some $0 < p <1$ on the affine expansion \eqref{affine}, the authors have proven the $\ell_p$-summability of the coefficients  in a Taylor or Legendre polynomials expansion and hence proposed best adaptive  $n$-term methods of Galerkin and collocation approximations in energy norm  by choosing  the set of the $n$ most useful terms in these expansions. To derive a fully discrete approximation  the best  $n$-term approximants are then  approximated by finite element methods. Similar results have been received in \cite{HoS14} for Galerkin approximation in the lognormal case based  on the conditions $\big(j\|\psi_j\|_{\Wi}\big)_{j \in \NN} \in \ell_p(\NN)$ for some $0 < p <1$. In these papers, they did not take into account support properties of the functions $\psi_j$.

A different approach to studying summability   that takes into account the 
support properties has been recently proposed in \cite{BCM17} for the affine case
and \cite{BCDM17} for the lognormal case. This approach leads to significant improvements
on the results on $\ell_p$-summability when the functions $\psi_j$ have limited overlap, such as splines, finite elements or wavelet bases. These results by themselves do not imply practical applications, because they do not cover the approximation of the expansion coefficients which are functions of the spatial variable. 

In the recent paper \cite{BCDS17},  
the rates of fully discrete adaptive best $n$-term Taylor, Jacobi and Hermite polynomial approximations for elliptic PDEs with  affine or lognornal  parametrizations of the diffusion coefficients have been obtained  based on  combining a certain approximation property on the spatial domain, and extensions of the results on $\ell_p$-summability of \cite{BCM17,BCDM17} to higher-order Sobolev norms of corresponding Taylor, Jacobi and Hermite expansion coefficients. These results  providing a benchmark for convergence rates, are not constructive. In the case when $\ell_p$-summable sequences of Sobolev norms of expansion coefficients have  an $\ell_p$-summable majorant sequence, these convergence rates can be achieved by  linear methods of GPC expansion and collocation approximations  in the affine case \cite{CD15,Di15,Di18a,Di18b,Ze18,ZDS18}.  However,  this non-adaptive approach is not applicable for the improvement  of  $\ell_p$-summability in \cite{BCM17,BCDM17,BCDS17} since the weakened  $\ell_p$-assumption leads only to the $\ell_p$-summability of  expansion coefficients, but not to  an $\ell_p$-summable majorant sequence. Non-adaptive non-fully discrete methods have been considered in \cite{EST18} for polynomial collocation approximation, and in \cite{Ch18} for weighted integration (see also \cite[Remark 3.2]{BCM17} and \cite[Remark 5.1]{BCDM17} for  briefly considering non-adaptive non-fully discrete approximations by truncated GPC expansions).

Let us briefly describe the main contribution of  the present paper.  By combining spatial and parametric aproximability, namely, the   approximation property in Assumptions I  in the spatial domain and weighted $\ell_2$-summability of the $V$ and $W$ norms  of Hermite polynomial expansion coefficients  obtained in \cite{BCDM17,BCDS17}, we investigate  linear non-adaptive methods of fully discrete approximation by truncated Hermite GPC expansion and polynomial interpolation approximation as well as fully discrete weighted quadrature methods of integration for parametric and stochastic elliptic PDEs with lognormal inputs \eqref{lognormal}.  We  construct  such methods and prove convergence rates of the approximations by them.  We show that the convergence  rate in terms of the dimension of the approximation space of adaptive  fully discrete  approximation by truncated Hermite GPC expansion  obtained in \cite{BCDS17}, is achieved by linear non-adaptive methods of fully discrete approximation by truncated Hermite GPC expansion approximation. The linear non-adaptive methods of fully discrete polynomial interpolation approximation  are sparse-grid collocation methods which are  certain sums taken over  finite nested Smolyak-type indices sets  of tensor products of  dyadic scale successive differences of spatial approximations of particular solvers, and of   successive differences of their parametric Lagrange interpolating polynomials. The Smolyak-type sparse interpolation grids in the parametric domain are constructed from
 the roots of Hermite polynomials or their improved modifications. 
 Moreover, these methods generate in a natural way fully discrete weighted quadrature formulas  for integration of the solution $u(\by)$ and its linear functionals, and the error of the  corresponding integration can be estimated via the error in the space $L_1(\RRi,V,\gamma)$ norm of the generating methods where $\gamma$ is  the Gaussian probability measure on $\RRi$. 
  The convergence rate of  fully discrete integration is better than the convergence rate of  the generating fully discrete polynomial interpolation approximation due to  the simple but useful observation  that the integral $\int_{\RR} v(y) \, \rd \gamma (y)$ is zero if $v(y)$ is an odd function and $\gamma$ is the Gaussian probability measure on $\RR$. (This property has been used in \cite{Ze18,ZDS18,ZS17} for improving convergence rate of integration in the affine case.)
 We also briefly  consider similar problems for parametric and stochastic elliptic PDEs with affine inputs \eqref{affine} by using  counterparts-results in \cite{BCM17,BCDS17}, and  problems of non-fully discrete polynomial interpolation approximation and integration similar to those treated in \cite{EST18} and \cite{Ch18}. In particular, the convergence rates of non-fully discrete interpolation approximation and integration in terms of number of the evaluation points obtained in this paper, are significantly better than those which have been proven in \cite{EST18} and in \cite{Ch18}. 

 Finally, let us notice that the aim of this paper is to establish approximation results which should  show posibilities of non-adaptive approximation methods and  convergence rates of approximation by such methods for the parameterized diffusion elliptic equation \eqref{p-ellip} with lognormal inputs. The two most popular numerical methods are Galerkin projection and collocation. Since  in the lognormal case, the diffusion coefficient $a(\by)(\bx)$ is not uniformly bounded in $\by \in \RRi$, there is no a well-posed linear variational problem on the space $L_2(\RRi,V,\gamma)$ for Galerkin approximation. Some best $n$-term Galerkin approximations with respect to a "stronger" Gaussian measure $\gamma_{\varrho}$  were considered in \cite{HoS14}. Collocation methods will be discussed in a forthcoming paper which extends the results in the affine case \cite{ZDS18,ZS17}.

The paper is organized as follows. 
In Sections~\ref{Linear Galerkin approximation}--\ref{Integration},  we  construct  general linear fully discrete  and non-fully discrete methods of Hermite GPC expansion  and polynomial interpolation approximations in the Bochner space $L_p(\RRi,X^1, \gamma)$,  and quadrature  of functions taking values in $X^2$ and having a weighted $\ell_2$-summability of Hermite expansion coefficients  for Hilbert spaces $X^1$ and $X^2$ satisfying a certain ``spatial" approximation property (see \eqref{spatialappnX}). In particular,
in Section~\ref{Linear Galerkin approximation}, we prove convergence rates of general linear fully discrete  methods of approximation approximation by truncated Hermite GPC expansion; in Section~\ref{interpolation}, we prove convergence rates of general linear fully discrete and non-fully discrete polynomial interpolation methods of approximation; in Section~\ref{Integration}, we prove convergence rates of general linear fully discrete and non-fully discrete quadrature for integration. In Section~\ref{lognormal inputs}, we apply the results of Sections~\ref{Linear Galerkin approximation}--\ref{Integration} to obtain the main results of this paper on convergence rates of  linear non-adaptive methods of fully discrete approximation by truncated Hermite GPC expansion, and fully discrete  and non-fully discrete polynomial interpolation approximation and weighted  quadrature methods of integration for parametric and stochastic elliptic PDEs with lognormal inputs. In Section~\ref{affine inputs}, by extending the theory in Sections~\ref{Linear Galerkin approximation}--\ref{Integration}, we briefly consider similar problems for  parametric and stochastic elliptic PDEs with affine inputs. 

\section{Linear approximation by truncated Hermite GPC expansion }
\label{Linear Galerkin approximation}

 In this section, we treat a general linear fully discrete approximation by truncated Hermite GPC series of functions from the Bochner space $L_2(\RRi,X^2, \gamma)$. The approximation error is measured in the  Bochner space $L_p(\RRi,X^1, \gamma)$  with $0 <p \le 2$. Here, $X^1$ and $X^2$  are Hilbert spaces, and $\gamma$ is the infinite tensor product Gaussian probability measure. 
We  construct linear (non-adaptive) methods of this approximation and  prove convergence rates for the approximation error.

We first recall a concept of infinite tensor product of probability measures. (For details see, e.g., \cite[pp. 429--435]{HS65}.)
Let $\mu(y)$ be a probability measure on $\UU$, where $\UU$ is either $\RR$ or $\II:= [-1,1]$.
We introduce the probability measure $\mu(\by)$ on $\UUi$ as the infinite tensor product of probability measures $\mu(y_i)$:
\begin{equation} \nonumber
\mu(\by) 
:= \ 
\bigotimes_{j \in \NN} \mu(y_j) , \quad \by = (y_j)_{j \in \NN} \in \UUi.
\end{equation}
The sigma algebra for $\mu(\by)$ is generated by the set of cylinders $A:= \prod_{j \in \NN} A_j$, where $A_j \subset \UU$ are univariate $\mu$-measurable sets and only a finite number of $A_i$ are different from $\UU$. For such a set $A$, we have $\mu(A) = \prod_{j \in \NN} \mu(A_j)$.
If $\varrho (y)$ is the density of $\mu(y)$,  i.e., $\rd \mu(y) = \varrho(y) \rd y $, then we  write
\begin{equation} \nonumber
\mbox{d} \mu(\by) 
:= \ 
\bigotimes_{j \in \NN} \varrho(y_j) \rd (y_j), \quad \by = (y_j)_{j \in \NN} \in \UUi.
\end{equation}

Let $X$ be a Hilbert space and  $0 < p <\infty$. 
The probability measure $\mu(\by)$ induces  the Bochner space $L_p(\UUi,X,\mu)$ of strongly $\mu$-measurable mappings $v$ from $\UUi$ to $X$ which are  $p$-summable. The (quasi-)norm in $L_p( \UUi,X,\mu)$ is defined by
\begin{equation} \nonumber
\|v\|_{L_p( \UUi,X,\mu)}
:= \
\left(\int_{ \UUi} \|v(\cdot,\by)\|_X^p \, \mbox{d} \mu(\by) \right)^{1/p}.
\end{equation} 
  Notice that if $X$ is separable, $L_2( \UUi,X,\mu)$ is the tensor product of the Hilbert spaces  $L_2( \UUi,\RR,\mu)$ and $X$.
 
In the present paper, we focus our attention mainly to the lognormal case with $\UUi = \RRi$ and $\mu(\by) = \gamma(\by)$, the infinite tensor product  Gaussian probability measure.  
Let $\gamma(y)$ be the probability measure on $\RR$ with the standard Gaussian density: 
\begin{equation} \label{g}
\rd\gamma(y):=g(y)\,\rd y, \quad g(y):=\frac 1 {\sqrt{2\pi}} e^{-y^2/2}.
\end{equation}
Then the infinite tensor product Gaussian probability measure $\gamma(\by)$ on $\RRi$ can be defined by
\begin{equation} \nonumber
\rd \gamma(\by) 
:= \ 
\bigotimes_{j \in \NN} g(y_j) \rd (y_j), \quad \by = (y_j)_{j \in \NN} \in \RRi.
\end{equation}

A powerful strategy for the approximation of functions $v$  in $L_2(\RRi,X,\gamma)$ is based on the truncation of the Hermite GPC expansion 
\begin{equation} \label{series}
v(\by)=\sum_{\bs\in\FF} v_\bs \,H_\bs(\by), \quad v_\bs \in X.
\end{equation}
Here $\FF$ is the set of all sequences of non-negative integers $\bs=(s_j)_{j \in \NN}$ such that their support $\supp (\bs):= \{j \in \NN: s_j >0\}$ is a finite set,
and 
\begin{equation} \nonumber
H_\bs(\by)=\bigotimes_{j \in \NN}H_{s_j}(y_j),\quad v_\bs:=\int_{\RRi} v(\by)\,H_\bs(\by)\, \rd\gamma (\by), \quad \bs \in \FF,
\label{hermite}
\end{equation}
with $(H_k)_{k\geq 0}$ being the  Hermite polynomials normalized according to
$\int_{\RR} | H_k(y)|^2\, g(y)\, \rd y= 1.$
It is well-known that $(H_\bs)_{\bs \in \FF}$ is an orthonormal basis of $L_2(\RRi,\RR, \gamma)$. Moreover, for every $v \in L_2( \RRi,X,\gamma)$  represented by the series \eqref{series}  it holds Parseval's identity
\begin{equation} \nonumber
\|v\|_{L_2( \RRi,X,\gamma)}^2
\ = \ \sum_{\bs\in\FF} \|v_\bs\|_X^2.
\end{equation}

We make use  of the abbreviations: $L_p(\RRi,\mu):=  L_p(\RRi,\RR,\mu)$;   $\Ll_p(X):=  L_p(\RRi,X,\gamma)$ for $0< p < \infty$.  We use letter $C$ to denote a general positive  constant which may take different values, and $C_{p,q,\alpha,D,...}$  a constant depending on $p,q,\alpha,D,...$.

 To construct general linear fully discrete  methods of approximation   in the Bochner space $\Ll_p(X^1)$  and of integration of functions taking values in $X^2$, we need the following  assumption on approximation property for $X^1$ and $X^2$, which is a generalization of Assumption I.

\noindent
{\bf Assumption II}  \,  The Hilbert space $X^2$ is  a linear subspace of the Hilbert space $X^1$ and that $\|\cdot\|_{X^1} \le C\, \|\cdot\|_{X^2}$.
There are a sequence $(V_n)_{n \in \NN_0}$ of subspaces $V_n \subset X^1 $ of dimension $\le n$, and a sequence $(P_n)_{n \in \NN_0}$ of linear operators from $X^1$ into $V_n$, 
and a number $\alpha>0$ such that
\begin{equation} \label{spatialappnX}
\|P_n(w)\|_{X^1} \leq 
C\|w\|_{X^1} , \quad
\|w-P_n(w)\|_{X^1} \leq 
Cn^{-\alpha} \|w\|_{X^2}, \quad \forall n \in \NN_0, \quad \forall w \in X^2.
\end{equation}

 For $k \in \NN_0$,  we define
\begin{equation} \nonumber
\delta_k (w)
:= \
P_{2^k} (w)  - P_{2^{k-1}} (w), \ k \in \NN, \quad \delta_0 (w) = P_0 (w).
\end{equation}
Under Assumption II, we can represent every $w \in X^2$  by the series
\begin{equation} \nonumber
w
\ = \
\sum_{k =0}^\infty \delta_k (w)
\end{equation}
converging in $X^1$ and satisfying the estimates
\begin{equation} \label{delta-approx-propertyX}
\|\delta_k (w)\|_{X^1}
\ \le \
C\,\, \|w\|_{X^1}, \quad	
\|\delta_k (w)\|_{X^1}
\ \le \
 C\, 2^{-\alpha k} \, \|w\|_{X^2}, \quad k \in \NN_0.
\end{equation}

For a  finite subset $G$ in $ \NN_0 \times \FF$, 
denote by $\Vv(G)$ the subspace in $\Ll_2(X^1)$ of  all functions $v$
of the form
\begin{equation} \nonumber
v
\ = \
\sum_{(k,\bs) \in G} v_k \, H_\bs, \quad v_k \in V_{2^k}.
\end{equation}
Let Assumption II hold for Hilbert spaces $X^1$ and $X^2$. We define the linear operator $\Ss_G: \, \Ll_2(X^2) \to \Vv(G)$ by
\[
\Ss_G v
:= \
\sum_{(k,\bs) \in G} \delta_k (v_\bs) \, H_\bs
\]
for $v \in \Ll_2(X^2)$ represented by the series
\begin{equation} \label{HermiteSeriesV}
v = \sum_{\bs\in\FF} v_\bs H_\bs, \quad v_\bs \in X^2.
\end{equation}

\begin{lemma}\label{lemma[G_K-Conv]}
	Let Assumption II hold for Hilbert spaces $X^1$ and $X^2$.
	Then for every $v \in \Ll_2(X^2)$, 
	\begin{equation} \label{lim-S_Gk}
	\lim_{K \to \infty}
	\|v-  \Ss_{G_K}v \|_{\Ll_2(X^1)}
	\ = \
	0,
	\end{equation}
	where $G_K := \{(k,\bs) \in  \NN_0 \times \FF: \, 0 \le k \le K\}$.
\end{lemma}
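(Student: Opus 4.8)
The plan is to exploit the telescoping structure of the operator $\Ss_{G_K}$ and then reduce the convergence claim to a dominated-convergence argument for a series in $X^1$, using Parseval's identity in $\Ll_2(X^1)$.

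First I would observe that, because $G_K$ contains every index $(k,\bs)$ with $0 \le k \le K$, for each fixed $\bs \in \FF$ the sum over $k$ telescopes:
\[
\sum_{k=0}^{K} \delta_k(v_\bs) = P_0(v_\bs) + \sum_{k=1}^{K}\bigl(P_{2^k}(v_\bs) - P_{2^{k-1}}(v_\bs)\bigr) = P_{2^K}(v_\bs).
\]
Hence $\Ss_{G_K} v = \sum_{\bs \in \FF} P_{2^K}(v_\bs)\,H_\bs$, and the stability bound in Assumption II guarantees $\sum_{\bs}\|P_{2^K}(v_\bs)\|_{X^1}^2 \le C^2\sum_{\bs}\|v_\bs\|_{X^1}^2 < \infty$, so that $\Ss_{G_K} v$ is a genuine element of $\Ll_2(X^1)$ (this settles that the infinite extent of $G_K$ in the $\bs$-variable causes no difficulty). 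Subtracting from the expansion \eqref{HermiteSeriesV} gives $v - \Ss_{G_K} v = \sum_{\bs \in \FF}\bigl(v_\bs - P_{2^K}(v_\bs)\bigr) H_\bs$, and since $(H_\bs)_{\bs \in \FF}$ is an orthonormal basis and $\Ll_2(X^1)$ is the tensor product of $L_2(\RRi,\RR,\gamma)$ with $X^1$, Parseval's identity yields
\[
\|v - \Ss_{G_K} v\|_{\Ll_2(X^1)}^2 = \sum_{\bs \in \FF}\bigl\|v_\bs - P_{2^K}(v_\bs)\bigr\|_{X^1}^2 .
\]

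Next, the aim is to pass the limit $K \to \infty$ inside this infinite sum. For each fixed $\bs$ the approximation estimate in \eqref{spatialappnX} gives $\|v_\bs - P_{2^K}(v_\bs)\|_{X^1} \le C\,2^{-\alpha K}\|v_\bs\|_{X^2} \to 0$, so every summand tends to $0$. To control the whole series I would exhibit a summable majorant independent of $K$: combining the stability bound $\|P_{2^K}(v_\bs)\|_{X^1} \le C\|v_\bs\|_{X^1}$ with the embedding $\|\cdot\|_{X^1} \le C\|\cdot\|_{X^2}$ yields
\[
\bigl\|v_\bs - P_{2^K}(v_\bs)\bigr\|_{X^1}^2 \le (1+C)^2\,\|v_\bs\|_{X^1}^2 \le (1+C)^2 C^2\,\|v_\bs\|_{X^2}^2 ,
\]
whose right-hand side is summable over $\bs$ because $\sum_{\bs}\|v_\bs\|_{X^2}^2 = \|v\|_{\Ll_2(X^2)}^2 < \infty$. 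A discrete dominated-convergence argument then finishes the proof: given $\varepsilon>0$, choose a finite $F \subset \FF$ carrying all but $\varepsilon$ of the majorant, bound the tail $\sum_{\bs \notin F}$ uniformly in $K$, and use that the finite sum $\sum_{\bs \in F}\|v_\bs - P_{2^K}(v_\bs)\|_{X^1}^2$ tends to $0$ termwise. This drives the whole sum to $0$, which is precisely \eqref{lim-S_Gk}.

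I expect the only genuinely non-trivial point to be the justification of interchanging the limit with the infinite sum over $\FF$; the telescoping identity together with Parseval reduces everything else to the two inequalities of Assumption II, whose application is purely mechanical.
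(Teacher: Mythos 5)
Your proof is correct and follows essentially the same route as the paper: the telescoping identity $\sum_{k=0}^K \delta_k(v_\bs) = P_{2^K}(v_\bs)$, the verification that $\Ss_{G_K}v \in \Ll_2(X^1)$, and Parseval's identity reducing everything to $\sum_{\bs}\|v_\bs - P_{2^K}(v_\bs)\|_{X^1}^2$. The only difference is at the final step, where the paper simply sums the bound $\|v_\bs - P_{2^K}(v_\bs)\|_{X^1}^2 \le C^2 2^{-2\alpha K}\|v_\bs\|_{X^2}^2$ over $\bs$ to get $C^2 2^{-2\alpha K}\|v\|_{\Ll_2(X^2)}^2 \to 0$ directly; your dominated-convergence argument is valid but unnecessary, since the uniform estimate you already wrote down yields the conclusion (with an explicit rate) in one line.
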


\begin{proof}
	Obviously, by the definition,  
	\begin{equation} \nonumber
	\Ss_{G_K}v
	\ = \  \sum_{\bs \in \FF} \sum_{k=0}^K \delta_k (v_\bs) \, H_\bs
	\ = \
	\sum_{\bs \in \FF} P_{2^K}(v_\bs) \, H_\bs.
	\end{equation} 
	From Parseval's identity and \eqref{spatialappnX}   it follows that
	\begin{equation} \nonumber
	\begin{split}
	\|\Ss_{G_K}v\|_{\Ll_2(X^1)}^2
	\ &= \
	\sum_{\bs \in \FF} \|P_{2^K}(v_\bs)\|_{X^1}^2
	\ \le \
	2 \sum_{\bs \in \FF} \|v_\bs\|_{X^1}^2 + 2 \sum_{\bs \in \FF}\|v_\bs -P_{2^K}(v_\bs)\|_{X^1}^2 \\
	\ &\le \
	2 \sum_{\bs \in \FF} \|v_\bs\|_{X^1}^2 + 2C^2 2^{-\alpha K} \sum_{\bs \in \FF}\|v_\bs \|_{X^2}^2 < \infty.
	\end{split}
	\end{equation} 
	This means that $\Ss_{G_K}v \in \Ll_2(X^1)$.  Hence, by  
	Parseval's identity and  \eqref{spatialappnX} we deduce that
	\begin{equation} \nonumber
	\|v- \Ss_{G_K}v\|_{\Ll_2(X^1)}^2
	\ = \
	\sum_{\bs \in \FF}\|v_\bs -P_{2^K}(v_\bs)\|_{X^1}^2
	\ \le \
	C^2 \, 2^{-2\alpha K} \sum_{\bs \in \FF} \|v_\bs\|_{X^2}^2
	\ = \
	C^2 \, 2^{-2\alpha K} \|v\|_{\Ll_2(X^2)}^2
	\end{equation} 
	which  proves the lemma.
	\hfill
\end{proof}	

Define for $\xi>1$
\begin{equation} \label{G(xi)G}
	G(\xi)
	:= \ 
	\begin{cases}
		\big\{(k,\bs) \in \NN_0 \times\FF: \, 2^k \sigma_{2;\bs}^{q_2} \leq \xi\big\} \quad &{\rm if }  \ \alpha \le 1/q_2;\\
		\big\{(k,\bs) \in \NN_0 \times\FF: \, \sigma_{1;\bs}^{q_1} \le \xi, \  
		2^{\alpha q_1 k} \sigma_{2;\bs}^{q_1} \leq \xi\big\} \quad  & {\rm if }  \ \alpha > 1/q_2.
	\end{cases}
\end{equation}

\begin{theorem}\label{thm[L_2-approx]}
	Let $0 < p \le 2$.
	 Let Assumption II hold for Hilbert spaces $X^1$ and $X^2$.
	Let $v \in \Ll_2(X^2)$ be represented by the series \eqref{HermiteSeriesV}.
	Assume that for $r=1,2$ there exist sequences $(\sigma_{r;\bs})_{\bs \in \FF}$  of numbers strictly larger than $1$ such that 
	\begin{equation} \label{weighted-summ}
	\sum_{\bs\in\FF} (\sigma_{r;\bs} \|v_\bs\|_{X^r})^2 <\infty
	\end{equation}		
	and 
	$(\sigma_{r;\bs}^{-1})_{\bs \in \FF} \in \ell_{q_r}(\FF)$  for some $0<q_1\leq q_2 <\infty$.	
		Then there exists a constant $C$ such that for each $n \in \NN$, there exists a number $\xi_n$  such that   $\dim(\Vv(G(\xi_n)) \le n$ and
	\begin{equation} 	\label{L_2-rate}
	\|v-\Ss_{G(\xi_n)}v\|_{\Ll_p(X^1)} \leq C \times
	\begin{cases}
		n^{-\alpha} &\text{if } \alpha \leq  1/q_2,
		\\
		n^{-\beta}  &\text{if } \alpha >  1/q_2.
	\end{cases} 
	\end{equation}
	The rate $\alpha$ corresponds to the approximation of a single function in $X^2$ as given by \eqref{spatialappnX}, and
	the rate $\beta$ is given by
\begin{equation} 	\label{[beta]2}
\beta := \frac 1 {q_1}\frac{\alpha}{\alpha + \delta}, \quad 
\delta := \frac 1 {q_1} - \frac 1 {q_2}.
\end{equation}
\end{theorem}

\begin{proof}
Due to the inequality
	$\|\cdot\|_{\Ll_p(X^1)} \le  \|\cdot\|_{\Ll_2(X^1)}$,  it is sufficient to prove the theorem for $p=2$.

{\it We first consider the case $\alpha \le 1/q_2$.} Let $\xi >0 $ be given and  take  arbitrary positive number  $\varepsilon$. Since $G(\xi)$ is finite, from the definition of 
$G_K$ and Lemma~\ref{lemma[G_K-Conv]} it follows that there exists $K = K(\xi,\varepsilon)$ such that 
$G(\xi) \subset G_K$ and 
\begin{equation}\label{ineq1}
\|v- \Ss_{G_K}v\|_{\Ll_2(X^1)} 
\ \le \ 
\varepsilon.
\end{equation}
By the triangle inequality,
\begin{equation} \label{ineq2}
\|v- \Ss_{G(\xi)} v\|_{\Ll_2(X^1)}
\ \le \ 
\|v- \Ss_{G_K}v\|_{\Ll_2(X^1)}
\ + \
\|\Ss_{G_K}v - \Ss_{G(\xi)} v\|_{\Ll_2(X^1)}.
\end{equation} 
We have by Parseval's identity and \eqref{delta-approx-propertyX} that
\begin{equation} \nonumber
\begin{split}
\|\Ss_{G_K}v - \Ss_{G(\xi)} v\|_{\Ll_2(X^1)}^2
\ &= \
\Big\|\sum_{\bs \in \FF}  \sum_{k=0}^K \delta_k (v_\bs) \, H_\bs - 
\sum_{\bs \in \FF}  \sum_{2^k \sigma_{2;\bs}^{q_2} \le \xi} \delta_k (v_\bs) \, H_\bs\Big\|_{\Ll_2(X^1)}^2 
\\[1.5ex]
\ &= \
\Big\|\sum_{\bs \in \FF}  \ \sum_{\xi \sigma_{2;\bs}^{-q_2}<2^k \le 2^K} \delta_k (v_\bs) \, H_\bs\Big\|_{\Ll_2(X^1)}^2 
\ = \
\sum_{\bs \in \FF}  \ \Big\|\sum_{\xi \sigma_{2;\bs}^{-q_2}<2^k \le 2^K} \delta_k (v_\bs)\Big\|_{X^1}^2 
\\[1.5ex]
\ &\le \
\sum_{\bs \in \FF}  \ \Big(\sum_{\xi \sigma_{2;\bs}^{-q_2}<2^k \le 2^K} \|\delta_k (v_\bs)\|_{X^1}\Big)^2
\ \le \
\sum_{\bs \in \FF}  \ \Big(\sum_{\xi \sigma_{2;\bs}^{-q_2}<2^k \le 2^K} C \, 2^{-\alpha k}\|v_\bs\|_{X^2}\Big)^2 
\\[1.5ex]
\ &\le \
C \, \sum_{\bs \in \FF} \|v_\bs\|_{X^2}^2 \ \Big(\sum_{2^k >\xi \sigma_{2;\bs}^{-q_2}}  2^{-\alpha k}\Big)^2
\ \le \
C \, \sum_{\bs \in \FF} \|v_\bs\|_{X^2}^2 \ (\xi \sigma_{2;\bs}^{-q_2})^{-2\alpha}. 
\end{split}
\end{equation}
Hence, by  the inequalities  $q_2\alpha \le 1$ and $\sigma_{2;\bs} >1$, and \eqref{weighted-summ} we derive that
\begin{equation} \nonumber
\begin{split}
\|\Ss_{G_K}v - \Ss_{G(\xi)} v\|_{\Ll_2(X^1)}^2
\ &\le \
C \, \, \sum_{\bs \in \FF} 
(\sigma_{2;\bs} \|v_\bs\|_{X^2})^2 
\ = \
\ C \, \xi^{- 2\alpha}.
\end{split}
\end{equation}
Since $\varepsilon > 0$ is arbitrary, from the last estimates and \eqref{ineq1} and \eqref{ineq2} we derive that
\begin{equation} \label{ineq3}
\|v- \Ss_{G(\xi)} v\|_{\Ll_2(X^1)}
\ \le \
C\, \xi^{- \alpha}.
\end{equation}
For the dimension of the space $\Vv(G(\xi))$ we have by \eqref{weighted-summ} that
 \begin{equation} \label{dimVGxi}
 \begin{split}
 \dim \Vv(G(\xi))
 \ &\le \ \sum_{(k,\bs) \in G(\xi)} \dim V_{2^k} 
 \ \le \ \sum_{(k,\bs) \in G(\xi)} 2^k
 \\[1.5ex]
 \ &\le  \
 \sum_{\bs \in \FF} \quad 
 \sum_{2^k \  \le \xi\sigma_{2;\bs}^{-q_2}} 2^k 
 \  \le \
 2 \sum_{\bs \in \FF} \xi\sigma_{2;\bs}^{-q_2} 
 \ = \
  M\,\xi,
 \end{split}
 \end{equation} 
 where $M:= 2 \big\|\big(\sigma_{2;\bs}^{-1}\big)\big\|_{\ell_{q_2}(\FF)}^{q_2}$.
 For any $n \in \NN$, letting $\xi_n$ be a number satisfying the inequalities 
 \begin{equation} \label{[xi_n]1}
M\, \xi_n
 \ \le \
 n
 \ < \ 2M\, \xi_n,
 \end{equation}
 we derive that
 $ \dim \Vv(G(\xi_n)) \le  n$.
 On the other hand, from \eqref{[xi_n]1} it follows that
 $
 \xi_n^{-\alpha} 
 \ \le \ (2M)^\alpha \, n^{-\alpha}.
$
 This together with \eqref{ineq3} proves that 
\begin{equation} \label{case<1/q_2}
\|v- \Ss_{G(\xi_n)} v\|_{\Ll_2(X^1)}
\ \le \
C\, n^{- \alpha}, \quad \alpha \le 1/q_2 .
\end{equation} 
 
 {\it We now consider the case $\alpha > 1/q_2$.} Putting
 \[
 v_\xi
 :=
 \sum_{\sigma_{1;\bs}^{q_1} \le \xi} v_\bs H_\bs,
 \]
 we get
 \begin{equation} \nonumber
 \|v- \Ss_{G(\xi)} v\|_{\Ll_2(X^1)}
 \ \le \
 \|v- v_\xi\|_{\Ll_2(X^1)} + \|v_\xi - \Ss_{G(\xi)} v\|_{\Ll_2(X^1)}.
 \end{equation}
 The norms in the right hand side can be estimated using Parseval's identity and the hypothesis of the theorem.
 Thus, for the norm $\|v- v_\xi\|_{\Ll_2(X^1)}$ we have by \eqref{weighted-summ} that
 \begin{equation} \label{ineq:|v-v_xi|}
 \begin{split}
 \|v- v_\xi\|_{\Ll_2(X^1)} ^2
 \ &= \
 \sum_{\sigma_{1;\bs}^{q_1} > \xi} \|v_\bs\|_{X^1}^2 
 \ = \
 \sum_{\sigma_ {1;\bs} > \xi^{1/q_1} } (\sigma_{1;\bs}\|v_\bs\|_{X^1})^2 \sigma_{1;\bs}^{-2}
 \\[1.5ex]
 \ &\le \
 \xi^{-2/q_1}
 \sum_{\sigma_{1;\bs}> \xi^{1/q_1} } (\sigma_{1;\bs}\|v_\bs\|_{X^1})^2 
 \le C \xi^{-2/q_1}.
 \end{split}
 \end{equation}
 For the norm $\|v_\xi - \Ss_{G(\xi)} v\|_{\Ll_2(X^1)}$, with 
 $N= N(\xi,\bs):= 2^{\lfloor\log_2 (\xi^{1/q_1\alpha} \sigma_{2;\bs}^{-1/\alpha})\rfloor}$ we obtain 
 \begin{equation} \nonumber
 \begin{split}
 \|v_\xi - \Ss_{G(\xi)} v\|_{\Ll_2(X^1)}^2
 \ &= \
 \sum_{\sigma_{1;\bs}> \xi^{1/q_1} } \Big\| v_\bs - 
 \sum_{2^{\alpha q_1 k} \sigma_{2;\bs}^{q_1} \le  \xi}  \delta_k (v_\bs) \Big\|_{X^1}^2
 \ \le \
 \sum_{\bs \in \FF} \Big\| v_\bs - 
 P_N (v_\bs) \Big\|_{X^1}^2
 \\[1.5ex]
 \ &\le \
 \sum_{\bs \in \FF} C N^{-2\alpha} \| v_\bs \|_{X^2}^2
 \le C \xi^{-2/q_1}
 \sum_{\bs \in \FF} (\sigma_{2;\bs}\| v_\bs \|_{X^2})^2
 \le C \xi^{-2/q_1}.
 \end{split}
 \end{equation}
 These estimates yield that
 \begin{equation} \label{case>1/q_2(1)}
 \begin{split}
 \|v- \Ss_{G(\xi)} v\|_{\Ll_2(X^1)}
 \  \le \ C \xi^{-1/q_1}.
 \end{split}
 \end{equation}
 
For the dimension of the space $\Vv(G(\xi))$, 
with $q:= q_2 \alpha > 1$ and $1/q' + 1/q = 1$ we have that
\begin{equation} \nonumber
\begin{split}
\dim \Vv(G(\xi))
\ &\le \ \sum_{(k,\bs) \in G(\xi)} \dim V_{2^k} 
\ \le \ 
\sum_{\sigma_{1;\bs}^{q_1} \le \xi} \quad 
\sum_{2^{\alpha q_1 k} \sigma_{2;\bs}^{q_1} \leq \xi} 2^k 
\\ \  &\le \
2 \sum_{\sigma_{1;\bs}^{q_1} \le \xi} \xi^{1/(q_1 \alpha)}\sigma_{2;\bs}^{-1/\alpha} 
\ \le \
2 \xi^{1/(q_1 \alpha)}\left(\sum_{\sigma_{1;\bs}^{q_1} \le \xi}\sigma_{2;\bs}^{-q_2} \right)^{1/q}
\left(\sum_{\sigma_{1;\bs}^{q_1} \le \xi} 1 \right)^{1/q'}
\\ \  &\le 
2 \xi^{1/(q_1 \alpha)}\left(\sum_{\bs \in \FF} \sigma_{2;\bs}^{-q_2} \right)^{1/q}
\left(\sum_{\bs \in \FF} \xi \sigma_{1;\bs}^{-q_1}  \right)^{1/q'}
\ =  \
M \xi^{1 + \delta/\alpha},
\end{split}
\end{equation}
where
$
M:=  2 \big\|\big(\sigma_{2;\bs}^{-1}\big)\big\|_{\ell_{q_2}(\FF)}^{q_2/q} \,
 \big\|\big(\sigma_{1;\bs}^{-1}\big)\big\|_{\ell_{q_1}(\FF)}^{q_1/q'}.
$
For any $n \in \NN$, letting $\xi_n$ be a number satisfying the inequalities 
\begin{equation} \label{[xi_n]2}
M\, \xi_n^{1 + \delta/\alpha}
\ \le \
n
\ < \ 2M\, \xi_n^{1 + \delta/\alpha},
\end{equation}
we derive that  $ \dim \Vv(G(\xi_n)) \le  n$.
On the other hand, by \eqref{[xi_n]2},
\begin{equation} \nonumber
\xi_n^{-1/q_1} 
\ \le \  (2M)^{\frac{\alpha}{\alpha + \delta}}\, n^{-\frac{1}{q_1} \frac{\alpha}{\alpha + \delta}}.
\end{equation}
This together with \eqref{case>1/q_2(1)} proves that
\begin{equation} \nonumber
\|v- \Ss_{G(\xi_n)} v\|_{\Ll_2(X^1)}
\  \le \ C n^{-\beta}, \quad \alpha > 1/q_2.
\end{equation}
By combining the last estimate and \eqref{case<1/q_2} we obtain  \eqref{L_2-rate}.
\hfill
\end{proof}

\begin{remark}
{\rm		
	 Let us compare the non-adaptive fully discrete method constructed in Theorem \ref{thm[L_2-approx]}  with adaptive one considered in \cite[Theorem 3.1]{BCDS17}. Both the methods give the same convergence rate $\min(\alpha,\beta)$. However, the ways to form them are different. Let us explain this.
		
	In  \cite[Theorem 3.1]{BCDS17} for the lognormal case, for a given $v \in X^2$,  a preliminary polynomial approximation $v_m := \sum_{\bs \in \Lambda_m}v_\bs H_\bs$ is taken by  truncation of the Hermite GPC expansion \eqref{HermiteSeriesV}, where $\Lambda_m \subset \FF$ is a set corresponding to $m$ largest $\|v_\bs\|_{X^1}$.  The coefficients $v_\bs \in X^2$ then is approximated by 
	$v_{\bs, m_\bs}:= P_{m_\bs} (v_\bs)$, and $v$ is approximated by the resulting approximant $v_\bm := \sum_{\bs \in \Lambda_m} v_{\bs, m_\bs} H_\bs$. An optimal choice of $\big(m_\bs\big)_{\bs \in \Lambda_m}$ give the rate $\min(\alpha,\beta)$ in terms of $n$ where $n = \sum_{\bs \in \Lambda_m} m_\bs$ is the dimension of the space $\Vv_\bm \subset X^2$ of all functions of the form $\sum_{\bs \in \Lambda_m}v_\bs H_\bs$, $v_\bs \in V_{m_\bs}$. This is an adaptive approximation method, since the choice of $m$ largest $\|v_\bs\|_{X^1}$ essentially depends of $v$.
	
	  In Theorem \ref{thm[L_2-approx]}, the approximant  $\Ss_{G(\xi_n)} v$ belongs to the space  $\Vv(G(\xi_n)) \subset \Vv(X^2)$.  The convergence rate $\min(\alpha,\beta)$ of approximation by $\Ss_{G(\xi_n)} v$  is given in terms of  $n$  where  the shareholding parameter $\xi_n$ is chosen such that  $\dim(\Vv(G(\xi_n)) \le n$.  
	 Notice that  $\Ss_{G(\xi_n)} v = \sum_{\bs \in \Lambda} v_{\bs, m_\bs} H_\bs$ and the space $\Vv(G(\xi_n))$ consists of all functions of the form $\sum_{\bs \in \Lambda}v_\bs H_\bs$, $v_\bs \in V_{m_\bs}$, for a certain set $\Lambda$ depending on $n$, i.e., formally they are similar to those in \cite[Theorem 3.1]{BCDS17}. The  difference here is that the set $\Lambda$ is  defined  independently of $v$. Hence, our approximation methods are non-adaptive provided that 
	 there is a sequence $(P_n)_{n \in \NN_0}$ of linear operators from $X^1$ into $n$-dimensional subspaces $V_n \subset X^1 $ satisfying  \eqref{spatialappnX} for all $v \in X^2$ (Assumption II). See also \cite[Remark 3.2]{BCDS17}.
} 
\end{remark}

\section{Polynomial interpolation  approximation}
\label{interpolation}

In this section, we construct general linear fully discrete polynomial interpolation methods of approximation  in the Bochner space $\Ll_p(X^1)$ of functions taking values in $X^2$ and having a weighted $\ell_2$-summability of Hermite expansion coefficients  for Hilbert spaces $X^1$ and $X^2$ satisfying a certain ``spatial" approximation property. In particular,
we prove convergence rates for these methods of approximation. We also briefly consider linear non-fully discrete polynomial interpolation methods of approximation. 

\subsection{Auxiliary results}

Let $w = \exp(-Q)$, where $Q$ is an even function on $\RR$ and $yQ'(y)$ is positive and increasing in $(0,\infty)$, with limits $0$ and $\infty$ at $0$ and $\infty$.  Notice that for the standard Gaussian density $g$ defined in \eqref{g}, $\sqrt{g}$ is such a function.
For $m \in \NN$, the $n$th Mhaskar-Rakhmanov-Saff number $a_m= a_m(w)$ is defined as the positive root of the equation
\begin{equation} \nonumber
m
\ = \
\frac{2}{\pi} \int_0^1 \frac{a_myQ'(a_my)}{\sqrt{1 - y^2}} \, \rd y.
\end{equation}
From \cite[Page 11]{Lu07}  we have for  $w=\sqrt{g}$,
\begin{equation} \label{a_m(g)}
a_m(\sqrt{g})
\ = \
\sqrt{m}.
\end{equation}
For $0 < p,q \le \infty$, we introduce the quantity
\begin{equation} \nonumber
\delta (p,q)
:= \ \frac 1 2 \left|\frac{1}{p} - \frac{1}{q}\right|.
\end{equation}

\begin{lemma} \label{lemma[N-ineq]}
	Let $0 < p,q \le \infty$. Then  there exists a positive constant $C_{p,q}$ such that for every polynomial $\varphi$ of degree $\le m$,  the Nikol'skii-type inequality holds
	\begin{equation} \nonumber
	\|\varphi \sqrt{g} \|_{L_p( \RR)}
	\ \le \
	C_{p,q} m^{\delta(p,q)}\, \|\varphi \sqrt{g} \|_{L_q( \RR)}.
	\end{equation}
\end{lemma}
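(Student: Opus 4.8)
The plan is to prove the inequality by reducing the full-line estimate to the Mhaskar--Rakhmanov--Saff interval and then separating the two regimes determined by the sign of $1/p-1/q$. The two genuinely analytic inputs I would borrow from the theory of Freud weights (both available in \cite{Lu07} for the weight $w=\sqrt{g}$) are: the \emph{infinite--finite range inequality}, stating that for every $0<s\le\infty$ there is a constant $c>1$ with $\|\varphi\sqrt{g}\|_{L_s(\RR)}\le C\,\|\varphi\sqrt{g}\|_{L_s(I_m)}$ for $I_m:=[-c\,a_m,\,c\,a_m]$ whenever $\deg\varphi\le m$; and the \emph{weighted Bernstein--Markov inequality} $\|(\varphi\sqrt{g})'\|_{L_\infty(\RR)}\le C\,(m/a_m)\,\|\varphi\sqrt{g}\|_{L_\infty(\RR)}$. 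Since $a_m(\sqrt{g})=\sqrt{m}$ by \eqref{a_m(g)}, one has $|I_m|\sim\sqrt{m}$ and $m/a_m=\sqrt{m}$, and these are exactly the two scalings that produce the exponent $\delta(p,q)$.

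For the regime $p\le q$ (so $1/p-1/q\ge 0$ and $\delta(p,q)=\tfrac12(1/p-1/q)$) the estimate is essentially Hölder's inequality. First I would use the infinite--finite range inequality to replace $\RR$ by the bounded interval $I_m$, and then apply Hölder on the finite-measure set $I_m$ in the form $\|f\|_{L_p(I_m)}\le|I_m|^{1/p-1/q}\,\|f\|_{L_q(I_m)}$, valid for all $0<p<q\le\infty$ (including $p<1$, by raising to the power $q/p\ge 1$). Bounding $\|f\|_{L_q(I_m)}\le\|f\|_{L_q(\RR)}$ and using $|I_m|^{1/p-1/q}\le C\,m^{\frac12(1/p-1/q)}=C\,m^{\delta(p,q)}$ closes this case; the endpoint $q=\infty$ is covered identically.

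The regime $p>q$ (so $\delta(p,q)=\tfrac12(1/q-1/p)$) is the substantive one, and I would first reduce it to the endpoint inequality $\|\varphi\sqrt{g}\|_{L_\infty(\RR)}\le C\,m^{1/(2q)}\,\|\varphi\sqrt{g}\|_{L_q(\RR)}$ (note $\delta(\infty,q)=1/(2q)$) via the elementary interpolation $\|f\|_{L_p}\le\|f\|_{L_\infty}^{1-q/p}\,\|f\|_{L_q}^{q/p}$, which turns the $L_\infty$ exponent $1/(2q)$ into $\tfrac{1}{2q}(1-q/p)=\tfrac12(1/q-1/p)=\delta(p,q)$; the case $p=\infty$ is then just this endpoint inequality itself. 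To prove the endpoint inequality, set $M:=\|\varphi\sqrt{g}\|_{L_\infty(\RR)}$, attained at some finite $y_0$ since $\varphi\sqrt{g}$ vanishes at infinity. The weighted Bernstein inequality gives $\|(\varphi\sqrt{g})'\|_{L_\infty(\RR)}\le C\sqrt{m}\,M$, so on the interval $J$ of length $\ell=(2C\sqrt{m})^{-1}$ centred at $y_0$ one has $|\varphi\sqrt{g}|\ge M/2$ by the mean value theorem. Integrating over $J$ yields $\|\varphi\sqrt{g}\|_{L_q(\RR)}^q\ge(M/2)^q\,\ell$, that is $M\le C\,m^{1/(2q)}\,\|\varphi\sqrt{g}\|_{L_q(\RR)}$, as required.

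The hard part is not the exponent bookkeeping, which collapses uniformly to $\delta(p,q)$ precisely because $a_m=\sqrt{m}$, but rather the two Freud-weight inputs: the restricted-range (infinite--finite range) inequality used to localise the $p\le q$ case, and the weighted Bernstein--Markov inequality driving the $L_\infty$ endpoint in the $p>q$ case. Both are standard for the Gaussian-type weight $\sqrt{g}$ and I would simply invoke them from \cite{Lu07}; the remainder of the argument is elementary.
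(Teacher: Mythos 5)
Your proof is correct, but it takes a different route from the paper. The paper's proof is a two-line affair: it directly invokes the Nikol'skii-type inequality of \cite[Theorem 9.1, p.~61]{Lu07}, which bounds $\|\varphi\sqrt{g}\|_{L_p(\RR)}$ by $C_{p,q}N_m(p,q)\|\varphi\sqrt{g}\|_{L_q(\RR)}$ with $N_m(p,q)$ expressed through the Mhaskar--Rakhmanov--Saff number $a_m$, and then substitutes $a_m(\sqrt{g})=\sqrt{m}$ from \eqref{a_m(g)} to read off the exponent $\delta(p,q)$ in both regimes. You instead reconstruct that theorem from two more primitive Freud-weight inputs (the infinite--finite range inequality and the weighted Bernstein--Markov inequality), handling $p\le q$ by H\"older on the interval $[-ca_m,ca_m]$ and $p>q$ by reduction to the $L_\infty$ endpoint via the elementary interpolation $\|f\|_{L_p}\le\|f\|_{L_\infty}^{1-q/p}\|f\|_{L_q}^{q/p}$ together with a mean-value argument on a window of length $\sim m^{-1/2}$ around the point where the sup is attained. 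Both arguments are sound and both ultimately lean on \cite{Lu07}; the paper's citation is the economical choice, while your derivation makes transparent \emph{why} the exponent is $\tfrac12|1/p-1/q|$ rather than the unweighted $|1/p-1/q|$ --- namely that both relevant length scales, $|I_m|\sim a_m$ and $a_m/m$, are governed by $a_m=\sqrt{m}$. One cosmetic remark: in your endpoint argument the Bernstein bound gives a deviation of at most $M/4$ on the stated window (since $|y-y_0|\le\ell/2$), not $M/2$, but this only changes the constant and not the conclusion.
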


\begin{proof} This lemma is an immediate consequence of \eqref{a_m(g)}  and the inequality 
	\begin{equation} \nonumber
	\|\varphi \sqrt{g} \|_{L_p( \RR)}
	\ \le \
	C_{p,q}N_m(p,q)\, \|\varphi \sqrt{g} \|_{L_q( \RR)}
	\end{equation}
	which follows from  \cite[Theorem 9.1, p. 61]{Lu07},
	where
	\begin{equation} \nonumber
	N_m(p,q)
	:= \
	\begin{cases}
	a_m^{1/p - 1/q}, \quad & p < q, \\
	\left(\frac{n}{a_m}\right)^{1/q - 1/p}, & p > q. 
	\end{cases}
	\end{equation}		
	\hfill
\end{proof}

\begin{lemma} \label{lemma[H_bs|_infty]}
	We have 
	\begin{equation} \label{eq[H_bs|_infty]}
	\|H_m\sqrt{g}\|_{L_\infty(\RR)}
	\ \le \
	1, \quad m \in \NN_0.	
	\end{equation}
\end{lemma}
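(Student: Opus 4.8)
The plan is to reduce \eqref{eq[H_bs|_infty]} to the classical uniform (Cramér) bound for $L_2(\RR)$-orthonormal Hermite functions. Write $\psi_m := H_m\sqrt{g}$. By the chosen normalization $\int_\RR |H_m(y)|^2 g(y)\,\rd y = 1$, so $\psi_m$ has unit $L_2(\RR,\rd y)$-norm, and \eqref{eq[H_bs|_infty]} is exactly the assertion $\|\psi_m\|_{L_\infty(\RR)} \le 1$. Here the $H_m$ are the $L_2(\gamma)$-normalizations of the probabilists' Hermite polynomials $\mathrm{He}_m$, i.e. $H_m = \mathrm{He}_m/\sqrt{m!}$, and they satisfy $H_m'' - y H_m' + m H_m = 0$.

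The first step is a change of normalization. Let $h_m(x) := (2^m m!\sqrt{\pi})^{-1/2}\,e^{-x^2/2}\,\widetilde H_m(x)$ denote the $L_2(\RR,\rd x)$-orthonormal Hermite functions built from the classical (physicists') Hermite polynomials $\widetilde H_m$. Using $\widetilde H_m(x) = 2^{m/2}\mathrm{He}_m(\sqrt2\,x)$ and $\sqrt{g}(y) = (2\pi)^{-1/4}e^{-y^2/4}$, a direct computation gives the exact rescaling
\[
\psi_m(y) \ = \ 2^{-1/4}\, h_m\!\big(y/\sqrt2\,\big), \qquad y\in\RR .
\]
Since $y\mapsto y/\sqrt2$ is a bijection of $\RR$, this yields $\|\psi_m\|_{L_\infty(\RR)} = 2^{-1/4}\,\|h_m\|_{L_\infty(\RR)}$, so it suffices to bound $\|h_m\|_{L_\infty(\RR)}$ uniformly in $m$.

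Next I would invoke Cramér's inequality, the classical uniform estimate $\sup_{x\in\RR}|h_m(x)| \le K\,\pi^{-1/4}$ with $K = 1.0865\ldots$, valid for all $m\in\NN_0$. Combining with the rescaling gives $\|\psi_m\|_{L_\infty(\RR)} \le K\,(2\pi)^{-1/4} < 0.7 < 1$, which is \eqref{eq[H_bs|_infty]}, with a large margin. As an alternative fully in the spirit of the weighted-approximation setting of Lemma~\ref{lemma[N-ineq]}, one may instead quote directly the $L_\infty$ bound for the orthonormal polynomials associated with the weight $\sqrt{g}$ from \cite{Lu07}, invoking $a_m(\sqrt{g})=\sqrt{m}$ from \eqref{a_m(g)}.

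The only genuinely hard input is the uniform constant, which is why I would cite rather than reprove it. The extremal behaviour of $\psi_m$ occurs near the turning points $y\approx\pm 2\sqrt{m+\tfrac12}$ of the Schrödinger-form equation $\psi_m'' + (m+\tfrac12-\tfrac14 y^2)\psi_m = 0$, where the WKB envelope $\sim (m+\tfrac12-\tfrac14 y^2)^{-1/4}$ degenerates and Airy-type asymptotics are needed to cap the maximum. A Sonin-type monotone auxiliary function, such as $E := \psi_m^2 + (\psi_m')^2/(m+\tfrac12-\tfrac14 y^2)$ (whose derivative has a fixed sign on each half-line inside the oscillatory interval), controls $\psi_m$ comfortably in the interior but blows up at the turning points, so by itself it does not deliver the clean constant $1$. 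This turning-point analysis is precisely the content of the cited sharp bound.
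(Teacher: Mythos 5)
Your proof is correct and follows essentially the same route as the paper, which simply quotes Cram\'er's bound $|H_m(x)\sqrt{g(x)}|\le K(2\pi)^{-1/4}$ with $K=1.086435$ and observes that this constant is less than $1$. Your explicit rescaling $\psi_m(y)=2^{-1/4}h_m(y/\sqrt{2})$ checks out and merely makes the normalization conversion behind that cited inequality transparent.
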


\begin{proof}
	From Cram\'er's  bound (see, e.g., \cite[Page 208, (19)]{EMOT55}) we have  for every  $m \in \NN_0$ and every $x \in \RR$, $|H_m(x) \sqrt{g(x)}| \le K (2\pi)^{-1/4}$, where $K:= 1.086435$. This implies \eqref{eq[H_bs|_infty]}.
	\hfill
\end{proof}

For our application  the estimate \eqref{eq[H_bs|_infty]} is sufficient, see \cite{DILP18} for an improvement.

For $\theta, \lambda \ge 0$, we define the sequence
\begin{equation} \label{[p_s]}
p_\bs(\theta,\lambda) := \prod_{j \in \NN} (1 + \lambda s_j)^\theta, \quad \bs \in \FF.
\end{equation}

\begin{lemma}\label{lemma[AbsConv]}
	Let $0 < p \le 2$ and $X$ be a Hilbert space. 
	Let $v \in L_2(\RRi,X,\gamma)$ be represented by the series \eqref{series}.
	Assume that there exists a sequence  $\bsigma =(\sigma_\bs)_{\bs \in \FF}$ of positive numbers such that
	\begin{equation} \nonumber
	\sum_{\bs\in\FF} (\sigma_\bs \|v_\bs\|_{X})^2 <\infty.
	\end{equation}
	We have the following.
	\begin{itemize}
		\item[{\rm (i)}] If $	\left(p_\bs(\theta,\lambda) \sigma_\bs^{-1}\right)_{\bs \in \FF} \in \ell_{q}(\FF)$
		for some $0< q \le 2$ and $\theta, \lambda \ge 0$,
		then	$\left(p_\bs(\theta,\lambda) \|v_\bs\|_{X}\right)_{\bs \in \FF} \in \ell_{{\bar q}}(\FF)$ for ${\bar q}$ such that  $1/{\bar q} = 1/2 + 1/q$. 
		
		\item[{\rm (ii)}] 	
		If $\left(\sigma_\bs^{-1}\right)_{\bs \in \FF} \in \ell_{q}(\FF)$ 	for some $0< q \le 2$, then the series \eqref{series} converges absolutely in  $\Ll_p(X)$  to $v$.
	\end{itemize}
\end{lemma}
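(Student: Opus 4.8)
The plan is to prove part (i) by a single application of the generalized Hölder inequality for sequence spaces, and then to deduce part (ii) from part (i) together with a uniform bound on the $L_p(\RRi,\gamma)$-norms of the Hermite products $H_\bs$ and the completeness of $\Ll_p(X)$.

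For part (i), I would write the target sequence as a pointwise product
\[
p_\bs(\theta,\lambda)\,\|v_\bs\|_X \;=\; \bigl(p_\bs(\theta,\lambda)\,\sigma_\bs^{-1}\bigr)\,\bigl(\sigma_\bs\,\|v_\bs\|_X\bigr),
\]
where by hypothesis the first factor lies in $\ell_q(\FF)$ and the second in $\ell_2(\FF)$. The generalized Hölder inequality $\|ab\|_{\ell_{\bar q}}\le\|a\|_{\ell_q}\|b\|_{\ell_2}$, valid for all $0<q,\bar q\le\infty$ with $1/\bar q=1/q+1/2$, then gives exactly $\bigl(p_\bs(\theta,\lambda)\,\|v_\bs\|_X\bigr)_{\bs\in\FF}\in\ell_{\bar q}(\FF)$. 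This Hölder inequality for exponents below $1$ is obtained by raising everything to the power $\bar q$ and applying the ordinary Hölder inequality with the conjugate pair $q/\bar q,\,2/\bar q$, whose reciprocals sum to $1$. The only bookkeeping is to record that $\bar q\le q\le 2$ and, since $q\le 2$, that $1/\bar q=1/2+1/q\ge 1$, i.e. $\bar q\le 1$; this last observation is what makes part (i) immediately useful for part (ii).

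For part (ii), I would first apply part (i) with $\theta=0$, so that $p_\bs\equiv 1$ and the hypothesis of (i) reduces precisely to $(\sigma_\bs^{-1})_{\bs\in\FF}\in\ell_q(\FF)$. This yields $(\|v_\bs\|_X)_{\bs\in\FF}\in\ell_{\bar q}(\FF)$ with $\bar q\le 1$, hence $(\|v_\bs\|_X)_{\bs\in\FF}\in\ell_1(\FF)$. Next I would estimate each term of the series in the $\Ll_p(X)$-quasi-norm: since $\gamma$ is a probability measure and $0<p\le 2$, the embedding $\|\cdot\|_{\Ll_p(X)}\le\|\cdot\|_{\Ll_2(X)}$ already used in the proof of Theorem~\ref{thm[L_2-approx]} gives
\[
\|v_\bs H_\bs\|_{\Ll_p(X)}\;=\;\|v_\bs\|_X\,\|H_\bs\|_{L_p(\RRi,\gamma)}\;\le\;\|v_\bs\|_X\,\|H_\bs\|_{L_2(\RRi,\gamma)}\;=\;\|v_\bs\|_X,
\]
the last equality being the normalization of the orthonormal system $(H_\bs)_{\bs\in\FF}$ in $L_2(\RRi,\gamma)$; alternatively one may bound $\|H_\bs\|_{L_p(\RRi,\gamma)}$ factorwise via the product structure of $\gamma$ and Lemma~\ref{lemma[H_bs|_infty]}. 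Summing over $\bs$ and using the $\ell_1$-summability just obtained yields $\sum_{\bs\in\FF}\|v_\bs H_\bs\|_{\Ll_p(X)}\le\sum_{\bs\in\FF}\|v_\bs\|_X<\infty$, which is the asserted absolute convergence.

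It remains to identify the sum with $v$, and I expect this to be the only genuinely delicate point. I would \emph{not} deduce convergence of the series from absolute convergence alone, because for $0<p<1$ the space $\Ll_p(X)$ is only quasi-Banach and $\ell_1$-summability of the norms need not imply $\ell_p$-summability of their $p$-th powers. Instead I would argue that, since $v\in\Ll_2(X)$, Parseval's identity shows the series \eqref{series} converges to $v$ in $\Ll_2(X)$, and then the embedding $\|\cdot\|_{\Ll_p(X)}\le\|\cdot\|_{\Ll_2(X)}$ forces the same partial sums to converge to $v$ in $\Ll_p(X)$ as well. Combined with the finiteness of $\sum_{\bs}\|v_\bs H_\bs\|_{\Ll_p(X)}$, this gives that the series converges absolutely in $\Ll_p(X)$ to $v$. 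The clean way around the quasi-Banach subtlety for $p<1$ is thus to route convergence through the $\Ll_2$-theory rather than through absolute convergence alone.
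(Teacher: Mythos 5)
Your proof is correct and follows essentially the same route as the paper's: part (i) is the identical generalized H\"older argument (raising to the power $\bar q$ and applying ordinary H\"older with the conjugate pair $2/\bar q$, $q/\bar q$), and part (ii) likewise combines (i) with $\theta=\lambda=0$, the bound $\bar q\le 1$, the normalization $\|H_\bs\|_{L_2(\RRi,\gamma)}=1$, identification of the limit through the $\Ll_2$ Parseval convergence, and the embedding $\|\cdot\|_{\Ll_p(X)}\le\|\cdot\|_{\Ll_2(X)}$. Your explicit remark on why the limit must be identified via $\Ll_2$ rather than via absolute convergence alone in the quasi-Banach case $p<1$ is a sound clarification of a point the paper handles implicitly by working at $p=2$ first.
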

\begin{proof}Since $\tau:= 2/{\bar q} \ge  1$, with $1/\tau + 1/\tau' = 1$ and $p_\bs = p_\bs(\theta,\lambda)$ by the H\"older inequality we have that
	\begin{equation} \nonumber
	\begin{split}	 
	\sum_{\bs\in\FF} \left(p_\bs \|v_\bs\|_{X}\right)^{{\bar q}} 
	&\leq 
	\bigg( \sum_{\bs\in \FF} (\sigma_\bs^{{\bar q}} \|v_\bs\|_{X}^{{\bar q}})^\tau\bigg)^{1/\tau} 
	\bigg(\sum_{\bs\in \FF} \left(p_\bs^{{\bar q}} \sigma_\bs^{-{\bar q}}\right)^{\tau'} \bigg)^{1/\tau'} 
	\\[1.5ex]
	&=
	\bigg( \sum_{\bs\in \FF} (\sigma_\bs \|v_\bs\|_{X})^2\bigg)^{{\bar q}/2} 
	\bigg(\sum_{\bs\in \FF} \left(p_\bs\sigma_\bs^{-1}\right)^{q} \bigg)^{1-{\bar q}/2} <\infty\,.
	\end{split}
	\end{equation}	
	This proves the assertion (i).
	
	We have by the inequality ${\bar q} \le 1$ and (i) for $\theta = \lambda = 0$,
	\begin{equation} \nonumber
	\begin{split}
	\sum_{\bs\in\FF} \|v_\bs H_\bs\|_{\Ll_2(X)}
	\ &= \
	\sum_{\bs\in\FF} \|v_\bs\|_{X} \|H_\bs\|_{L_2(\RRi,\gamma)}
	\\[1ex]
	\ &\le \
	\sum_{\bs\in\FF} \|v_\bs\|_{X} 
	\ \le \
	\left(\sum_{\bs\in\FF} (\|v_\bs\|_{X} )^{{\bar q}}\right)^{1/{\bar q}}
	\ < \ \infty.
	\end{split}
	\end{equation}
	This yields that the series \eqref{series} absolutely converges in $\Ll_2(X)$ to $v$, since by the assumption this series converges in $\Ll_2(X)$ to $v$. The assertion (ii) is proven for the case $p=2$.  The case $0 < p < 2$ is derived from the case $p =2$ and the inequality $\|\cdot\|_{\Ll_p(X)} \le \|\cdot\|_{\Ll_2(X)}$.
	\hfill
\end{proof}

\begin{lemma}\label{lemma[AbsConv]2}
	Let $0 < p \le 2$.
	Let Assumption II hold for Hilbert spaces $X^1$ and $X^2$, and let the assumptions of Lemma \ref{lemma[AbsConv]}(ii) hold for the space  $X^1$.
	Then every $v \in \Ll_2(X^2)$ can be represented as the series 
	\begin{equation} \label{[delta-series]}
	v
	\ = \
	\sum_{(k,\bs) \in \NN_0 \times \FF} \delta_k (v_\bs)\, H_\bs
	\end{equation}
	converging absolutely  in $\Ll_p(X^1)$ to $v$. 
\end{lemma}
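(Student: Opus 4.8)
The plan is to prove \emph{absolute} convergence directly, i.e.\ to show that the sum of the $\Ll_p(X^1)$-norms of the individual terms is finite,
\[
\Sigma := \sum_{(k,\bs)\in\NN_0\times\FF}\big\|\delta_k(v_\bs)\,H_\bs\big\|_{\Ll_p(X^1)} < \infty .
\]
Finiteness of $\Sigma$ yields unconditional convergence of the series \eqref{[delta-series]} in $\Ll_p(X^1)$ to some limit, after which it only remains to identify that limit with $v$. Since $\|\cdot\|_{\Ll_p(X^1)}\le\|\cdot\|_{\Ll_2(X^1)}$ for $0<p\le 2$, it suffices to bound $\Sigma$ for $p=2$; and because $(H_\bs)_{\bs\in\FF}$ is orthonormal in $L_2(\RRi,\gamma)$, each term factorizes as $\|\delta_k(v_\bs)H_\bs\|_{\Ll_2(X^1)}=\|\delta_k(v_\bs)\|_{X^1}\,\|H_\bs\|_{L_2(\RRi,\gamma)}=\|\delta_k(v_\bs)\|_{X^1}$. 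Thus the whole problem reduces to estimating the double sum $\sum_{\bs\in\FF}\sum_{k\ge 0}\|\delta_k(v_\bs)\|_{X^1}$.

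For the inner sum over $k$ I would use the two bounds supplied by Assumption~II: the uniform stability bound $\|\delta_k(v_\bs)\|_{X^1}\le C\|v_\bs\|_{X^1}$, coming from the boundedness of the $P_n$ in \eqref{spatialappnX}, and the dyadic decay $\|\delta_k(v_\bs)\|_{X^1}\le C\,2^{-\alpha k}\|v_\bs\|_{X^2}$ from \eqref{delta-approx-propertyX}. Summing the geometric series in the decay bound already gives $\sum_{k}\|\delta_k(v_\bs)\|_{X^1}\le C(1-2^{-\alpha})^{-1}\|v_\bs\|_{X^2}$; splitting the range of $k$ at the crossover of the two bounds sharpens this to $\sum_{k}\|\delta_k(v_\bs)\|_{X^1}\le C\|v_\bs\|_{X^1}\big(1+\alpha^{-1}\log_2^+(\|v_\bs\|_{X^2}/\|v_\bs\|_{X^1})\big)$. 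It then remains to sum over $\bs\in\FF$. Here I would invoke Lemma~\ref{lemma[AbsConv]}: its hypotheses, assumed for $X^1$, give through part~(i) (with $\theta=\lambda=0$) that $(\|v_\bs\|_{X^1})_{\bs\in\FF}\in\ell_{\bar q}(\FF)$ with $1/\bar q=1/2+1/q\ge 1$, hence $\bar q\le 1$. In particular $(\|v_\bs\|_{X^1})_{\bs}$ is summable, and when $\bar q<1$ the extra summability margin absorbs the logarithmic factor, yielding $\Sigma<\infty$.

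Once $\Sigma<\infty$ is established I would identify the sum. For each fixed $\bs$ the partial sums telescope, $\sum_{k=0}^{K}\delta_k(v_\bs)=P_{2^K}(v_\bs)$, and \eqref{spatialappnX} gives $P_{2^K}(v_\bs)\to v_\bs$ in $X^1$, so $\sum_{k}\delta_k(v_\bs)=v_\bs$; absolute convergence then permits arbitrary regrouping of \eqref{[delta-series]}, whence $\sum_{(k,\bs)}\delta_k(v_\bs)H_\bs=\sum_{\bs}v_\bs H_\bs=v$, the last equality being Lemma~\ref{lemma[AbsConv]}(ii) applied to $X^1$. I expect the genuine obstacle to be the $\bs$-summation, not the $k$-summation: the $k$-sum naturally produces the \emph{stronger} norm $\|v_\bs\|_{X^2}$ through \eqref{delta-approx-propertyX}, whereas the weighted summability is only postulated for the $X^1$-norms. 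The delicate point is therefore to reconcile this spatial/parametric norm mismatch via the combined (logarithmically weighted) estimate above, using the gain $\bar q<1$ afforded by $q<2$ in Lemma~\ref{lemma[AbsConv]}; the borderline case $q=2$, where $\bar q=1$, would have to be treated separately, either by a weighted Cauchy--Schwarz estimate against the sequence $(\sigma_\bs^{-1})\in\ell_2$ or by strengthening the hypothesis to summability of the $X^2$-norms.
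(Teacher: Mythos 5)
Your route is genuinely different from the paper's. The paper never tries to bound $\sum_{(k,\bs)}\|\delta_k(v_\bs)H_\bs\|_{\Ll_2(X^1)}$ head-on: it combines the absolute convergence of $\sum_{\bs}v_\bs H_\bs$ in $\Ll_2(X^1)$ (Lemma~\ref{lemma[AbsConv]}(ii)) with the absolute convergence, uniformly in $\bs$, of the telescoping sums $\sum_k\delta_k(v_\bs)=v_\bs$ coming from the geometric decay \eqref{delta-approx-propertyX}, and then passes to unconditional convergence of the doubly indexed series, identifying the limit with $v$ via Lemma~\ref{lemma[G_K-Conv]}. You instead estimate the sum of norms directly, and you isolate exactly the right difficulty: the $k$-sum costs $\|v_\bs\|_{X^2}$, while summability is only assumed for the $X^1$-norms, so one interpolates between the stability and decay bounds at the price of a factor $1+\log^+\!\big(\|v_\bs\|_{X^2}/\|v_\bs\|_{X^1}\big)$. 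Since $\|v_\bs\|_{X^2}\le\|v\|_{\Ll_2(X^2)}$ by Parseval, for $q<2$ (hence $\bar q<1$) the margin in $\ell_{\bar q}$-summability absorbs the logarithm and your argument closes, giving a quantitative bound the paper does not state. Your identification of the limit (telescoping plus regrouping) is also fine.

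What you do not close is the endpoint $q=2$, which the hypothesis of Lemma~\ref{lemma[AbsConv]}(ii) formally allows. Your first suggested repair (Cauchy--Schwarz against $(\sigma_\bs^{-1})\in\ell_2$) does not obviously work: the factor $\sigma_\bs^{-2}\big(\log^+(\|v_\bs\|_{X^2}/\|v_\bs\|_{X^1})\big)^2$ is largest precisely where $\|v_\bs\|_{X^1}$ is very small, and $\ell_2$-summability of $(\sigma_\bs^{-1})$ alone does not control it. Indeed, the logarithmic loss can be genuine (orthogonal dyadic increments of equal size spread over roughly $\log\sigma_\bs$ levels give $\sum_k\|\delta_k(v_\bs)\|_{X^1}\ge c\,\|v_\bs\|_{X^1}\big(\log(\|v_\bs\|_{X^2}/\|v_\bs\|_{X^1})\big)^{1/2}$, which can destroy $\ell_1$-summability when $(\sigma_\bs^{-1})$ lies only in $\ell_2$), so at the endpoint one should expect only unconditional, not absolute, convergence. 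This is not a defect relative to the paper: the paper's own argument really establishes unconditional convergence (absolute and unconditional convergence are not equivalent in infinite-dimensional spaces), and unconditional convergence is all that is used downstream, e.g.\ in \eqref{I_Lambda} and \eqref{delta_kDelta_s}. Your second repair --- restricting to $q<2$, as the applications in Sections~\ref{lognormal inputs} and \ref{affine inputs} do, or assuming summability of the $X^2$-norms --- is the honest fix.
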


\begin{proof} 
	This lemma can be proven in a way similar to the proof of \cite[Lemma 2.1]{Di18b}. For completeness, let us give a detailed proof.
	As in the proof of Lemma~\ref{lemma[AbsConv]}, it is sufficient to prove the lemma for the case $p=2$.
	Put $v_{k,\bs} (\by)(\bx):= \delta_k (v_\bs)(\bx) \, H_\bs(\by)$. It is well known  that  the unconditional convergence in a Banach space follows from the absolute convergence. Using this fact,  from  Lemma \ref{lemma[AbsConv]}(ii) and Assumption II we derive that on one hand, the series 
	$
	\sum_{\bs \in \FF} v_{k,\bs}(\by)(\bx)
	$
	converges unconditionally in  $\Ll_2(X^1)$, and uniformly for $k \in \NN_0$, and  on the other hand, the series 
	$
	\sum_{k \in \NN_0} v_{k,\bs}(\by)(\bx)
	$
	converges absolutely in  $\Ll_2(X^1)$,  and uniformly for $s \in \FF$ to $v_\bs(\bx) \, H_\bs(\by)$. Hence, since the series \eqref{HermiteSeriesV} converges unconditionally in   $\Ll_2(X^1)$, we have that
	\begin{equation} \nonumber
	v(\by)(\bx)
	\ = \
	\sum_{\bs \in \FF} v_\bs(\bx) H_\bs(\by)
	\ = \
	\sum_{\bs \in \FF} \sum_{k \in \NN_0} v_{k,\bs} (\by)(\bx)
	\ = \
	\sum_{(k,\bs) \in \NN_0 \times \FF} v_{k,\bs} (\by)(\bx), \quad \bx \in D, \ \by \in \RRi,
	\end{equation}
	where the last series converges unconditionally in  $\Ll_2(X^1)$.  	This means that the series in \eqref{[delta-series]} converges absolutely to $v$, since  by Lemma~\ref{lim-S_Gk} the sum $S_{G_K}$ converges in $\Ll_2(X^1)$ to $v$ when $K \to \infty$. 
	\hfill 
\end{proof}

 Define for $\xi>1$,
\begin{equation} \label{G_(xi)}
	G(\xi)
	:= \ 
	\begin{cases}
		\big\{(k,\bs) \in \NN_0 \times\FF: \, 2^k \sigma_{2;\bs}^{q_2} \leq \xi\big\} \quad &{\rm if }  \ \alpha \le 1/q_2 - 1/2;\\
		\big\{(k,\bs) \in \NN_0 \times\FF: \, \sigma_{1;\bs}^{q_1} \le \xi , \  
		2^{\tau k} \sigma_{2;\bs}\leq \xi^\vartheta \big\} \quad  & {\rm if }  \ \alpha > 1/q_2 - 1/2,
	\end{cases}
\end{equation}
where
\begin{equation} \label{kappa}
	\tau:= \frac{2\alpha}{2 - q_2}, \qquad		
	\vartheta:= \frac{2}{2 - q_2} \left(\frac{1}{q_1} - \frac{1}{2}\right).
\end{equation}	

We will need the following two lemmata for application in estimating the convergence rate of the fully discrete polynomial interpolation approximation in this section and of integration in Section~\ref{Integration}.

\begin{lemma}\label{lemma[L_p-approx]}
	Under the hypothesis of Theorem~\ref{thm[L_2-approx]}, assume in addition that $q_1 < 2$.
		Then there exists a constant $C$ such that for each  $\xi >1$,
	\begin{equation} 	\label{L_p-rate}
	\|v-\Ss_{G(\xi)}v\|_{\Ll_p(X^1)} 
	\ \leq \
	C\times  
	\begin{cases}
	\xi^{-\alpha}
 \quad &{\rm if }  \ \alpha \le 1/q_2 - 1/2;\\
\xi^{-(1/q_1 - 1/2)}
\quad  & {\rm if }  \ \alpha > 1/q_2 - 1/2.
\end{cases}	
	\end{equation}
	The rate $\alpha$ is  given by \eqref{spatialappnX}.
\end{lemma}

\begin{proof}Similarly to the proof of Lemma~\ref{lemma[AbsConv]}, it is sufficient to prove the lemma for $p=2$.
	Since in the case $\alpha \le 1/q_2  - 1/2$, the formulas \eqref{G(xi)G} and \eqref{G_(xi)} define the same set $G(\xi)$ for $\xi >1$, from \eqref{ineq3} follows the lemma for this case.	
Let us consider the case $\alpha > 1/q_2 - 1/2$. Putting
	\[
	v_\xi
	:=
	\sum_{\sigma_{1;\bs}^{q_1} \le \xi} v_\bs H_\bs,
	\]
	we get
	\begin{equation} \label{triangle-ineq}
	\|v- \Ss_{G(\xi)} v\|_{\Ll_2(X^1)}
	\ \le \
	\|v- v_\xi\|_{\Ll_2(X^1)} + \|v_\xi - \Ss_{G(\xi)} v\|_{\Ll_2(X^1)}.
	\end{equation}
  As in the proof of Lemma \ref{lemma[AbsConv]2},  by Lemma~\ref{lemma[AbsConv]}(ii) the series \eqref{series} converges unconditionally in $\Ll_2(X^1)$ to $v$.
	Hence  the norm $\|v- v_\xi\|_{\Ll_2(X^1)}$  can be estimated by
	\begin{equation} \label{|u-u_xi|}
	\begin{split}
	\|v- v_\xi\|_{\Ll_2(X^1)} 
	\ &\le \
	\sum_{\sigma_{1;\bs}> (\xi )^{1/q_1}}
	\|v_\bs\|_{X^1} \, \|H_\bs\|_{L_2(\RR^\infty,\gamma)}
	\ = \
	\sum_{\sigma_{1;\bs}> (\xi )^{1/q_1}}  
	\|v_\bs\|_{X^1} 
	\\[1.5ex]
	\ &\le \
	\left(\sum_{\sigma_{1;\bs}> (\xi )^{1/q_1}}
	 (\sigma_{1;\bs}\|v_\bs\|_{X^1})^2\right)^{1/2}
	\left(\sum_{\sigma_{1;\bs}> (\xi )^{1/q_1}} \sigma_{1;\bs}^{-2}\right)^{1/2}
	\\[1.5ex]
	\ &\le \
	C\,
	\left(\sum_{\sigma_{1;\bs}> (\xi )^{1/q_1}}
	\sigma_{1;\bs}^{-q_1} 
	\sigma_{1;\bs}^{-(2- q_1)}\right)^{1/2}
	\\[1.5ex]
	\ &\le \
	C \xi^{-(1/q_1 - 1/2)}
	\left(\sum_{\bs \in \FF} \sigma_{1;\bs}^{-q_1} \right)^{1/2}
	\le C (\xi )^{-(1/q_1 - 1/2)}.
	\end{split}
	\end{equation}

For the norm $\|v_\xi - \Ss_{G(\xi)} v\|_{\Ll_2(X^1)}$, with 
$N= N(\xi,\bs):= 2^{\big\lfloor\log_2\big(  \sigma_{2;\bs}^{-1/\tau} 
	\xi^{\vartheta/ \tau}\big)\big\rfloor}$ 
we have 
\begin{equation} \nonumber
	\begin{split}
		\|v_\xi - \Ss_{G(\xi)} v\|_{\Ll_2(X^1)}
		\ &\le \
		\sum_{\sigma_{1;\bs}^{q_1}  \le  \xi }
		\Big\| v_\bs - \sum_{2^k \le \sigma_{2;\bs}^{-1/\tau}  \xi^{\vartheta/\tau}}  \delta_k (v_\bs) \Big\|_{X^1} \, \|H_\bs\|_{L_2(\RR^\infty,\gamma)}
		\\[1.5ex]
		\ &= \
		C\, \sum_{\sigma_{1;\bs}^{q_1}  \le  \xi }  \Big\| v_\bs - P_N (v_\bs) \Big\|_{X^1} 
		\ \le \
		C\, \sum_{\sigma_{1;\bs}^{q_1}  \le  \xi } N^{-\alpha} \| v_\bs \|_{X^2} 
		\\[1.5ex]
		\ &\le \
		C\, \sum_{\sigma_{1;\bs}^{q_1}  \le  \xi } 
		(\sigma_{2;\bs}^{-1/\tau}  \xi^{\vartheta/\tau})^{-\alpha} \| v_\bs \|_{X^2} 
		\ \le \ 
		C\, \xi^{- \vartheta \alpha/\tau}
		\sum_{\sigma_{1;\bs}^{q_1}  \le  \xi }\sigma_{2;\bs}^{\alpha/\tau}\| v_\bs \|_{X^2} 
		\\[1.5ex]
		\ &\le \ 
		C\, \xi^{- \vartheta \alpha/\tau}
		\left(\sum_{\sigma_{1;\bs}^{q_1}  \le  \xi } (\sigma_{2;\bs}\|v_\bs\|_{X^2})^2\right)^{1/2}
		\left(\sum_{\sigma_{1;\bs}^{q_1}  \le  \xi } \sigma_{2;\bs}^{2(\alpha/\tau -1)} \right)^{1/2}
		\\[1.5ex]
		\ &= \ 
		C\, \xi^{-(1/q_1 - 1/2)}
		\left(\sum_{\sigma_{1;\bs}^{q_1}  \le  \xi } \sigma_{2;\bs}^{- q_2} \right)^{1/2}
		\ \le \ 
			C\, \xi^{-(1/q_1 - 1/2)}.
	\end{split}
\end{equation} 
Here we used the equalities $\vartheta \alpha/\tau = 1/q_1 - 1/2$ and $2(1 - \alpha/\tau) = q_2$.
Summing up,  we find 
\begin{equation} \nonumber
	\begin{split}
		\|v_\xi - \Ss_{G(\xi)} v\|_{\Ll_2(X^1)}
		\le
		C\, \xi^{- (1/q_1 - 1/2)}.
	\end{split}
\end{equation}
	This, (3.8) and (3.9) prove the lemma for  the case $\alpha > 1/q_2  - 1/2$. 
\hfill
\end{proof}

We make use the notation: $\FF_\rev := \{\bs \in \FF: s_j \ {\rm  even}, \ j \in \NN \}$.  Define for $\xi>1$, 
\begin{equation} \label{G_rev(xi)}
	G_\rev(\xi)
	:= \ 
	\begin{cases}
		\big\{(k,\bs) \in \NN_0 \times\FF_\rev: \, 2^k \sigma_{2;\bs}^{q_2} \leq \xi\big\} \ &{\rm if }  \ \alpha \le 1/q_2 - 1/2;\\
		\big\{(k,\bs) \in \NN_0 \times\FF_\rev: \, \sigma_{1;\bs}^{q_1} \le \xi , \  
		2^{\tau k} \sigma_{2;\bs}\leq \xi^\vartheta \big\} \  & {\rm if }  \ \alpha > 1/q_2 - 1/2,
	\end{cases}
\end{equation}
where $\tau$, $\vartheta$  are as in \eqref{kappa}.	

The following lemma can be proven in a similar way.

\begin{lemma}\label{lemma[V_p-approx]ev}
	Let $0 < p \le 2$. Let Assumption II hold for Hilbert spaces $X^1$ and $X^2$.
	Let $v \in \Ll_2(X^2)$ be represented by the series
	\begin{equation} \label{HermiteSeriesVev}
	v = \sum_{\bs\in\FF_\rev} v_\bs H_\bs, \quad v_\bs \in X^2.
	\end{equation}		
	Assume that for $r=1,2$ there exist sequences $(\sigma_{r;\bs})_{\bs \in \FF_\rev}$ of numbers strictly larger than $1$ such that
	\begin{equation} \nonumber
	\sum_{\bs\in\FF_\rev} (\sigma_{r;\bs} \|v_\bs\|_{X^r})^2 <\infty
	\end{equation}		
	and 
	$(\sigma_{r;\bs}^{-1})_{\bs \in \FF_\rev} \in \ell_{q_r}(\FF_\rev)$ 
	for some $0<q_1\leq q_2 <\infty$ with $q_1 < 2$.	
		Then there exists a constant $C$ such that for each  $\xi >1$,
	\begin{equation} 	\label{L_p-rate-ev}
	\|v-\Ss_{G_\rev(\xi_n)}v\|_{\Ll_p(X^1)} 
	\ \leq \
	C\times  
	\begin{cases}
	\xi^{-\alpha}
	\quad &{\rm if }  \ \alpha \le 1/q_2 - 1/2;\\
	\xi^{-(1/q_1 - 1/2)}
	\quad  & {\rm if }  \ \alpha > 1/q_2 - 1/2.
	\end{cases}	
	\end{equation}
	The rate $\alpha$ is given by \eqref{spatialappnX}.	
\end{lemma}

\subsection{Interpolation  approximation}

For every $m \in \NN_0$, let $Y_m = (y_{m;k})_{k=0}^m$ be a sequence  of  points in $\RR$ such that 
\begin{equation} \label{interpolation points}
- \infty < y_{m;0} < \cdots < y_{m;m-1} < y_{m;m} < + \infty; \quad y_{0;0} = 0.
\end{equation}	
 If $v$ is a function on $\RR$ taking value in a Hilbert space $X$ and $m \in \NN_0$, we define the function  $I_m(v)$  on $\RR$ taking value in $X$ by
 \begin{equation} \label{I_(v)}
I_m(v)(y):= \ \sum_{k=0}^mv(y_{m;k}) \ell_{m;k}, \quad 
\ell_{m;k}(y) := \prod_{j=0,\ j\not=k}^m\frac{y - y_{m;j}}{y_{m;k} - y_{m;j}}, \quad y \in \RR,
 \end{equation}	
 interpolating $v$ at $y_{m;k}$, i.e., $I_m(v)(y_{m;k}) = v(y_{m;k})$.   Notice that for a function $v: \, \RR \to \RR$, the function $I_m(v)$ is the Lagrange polynomial having degree 
 $\le m + 1$, and that $I_m(\varphi) = \varphi$ for every polynomial $\varphi$ of degree $\le m + 1$.
 
 Let
  \begin{equation} \nonumber
 \lambda_m(Y_m):= \ \sup_{\|v\sqrt{g}\|_{L_\infty(\RR)} \le 1} \|I_m(v)\sqrt{g}\|_{L_\infty(\RR)} 
 \end{equation}	
 be the Lebesgue constant, see, e.g.,  \cite[Page 78]{Lu07}. 
 We want to choose a sequence $(Y_m)_{m=0}^\infty$ so that for some positive numbers $\varkappa$ and $C$, there holds the inequality
 \begin{equation} \label{ineq[lambda_m]}
\lambda_m(Y_m) 
\ \le \
(Cm+1)^\varkappa, \quad \forall m \in \NN_0.
 \end{equation}
 
We present two examples of $(Y_m)_{m=0}^\infty$ satisfying \eqref{ineq[lambda_m]}.
The first example is $(Y_m^*)_{n=0}^\infty$ where   $Y_m^* = (y_{m;k}^*)_{k=0}^n$  are the strictly increasing sequences of the roots of $H_{m+1}$. Indeed, it was proven by Matjila and Szabados \cite{Mat1994a,Mat1994b,Sza1997} that
\begin{equation} \nonumber
\lambda_m(Y_m^*) 
\ \le \
C(m+1)^{1/6}, \quad m \in \NN,
\end{equation}
for some positive constant $C$ independent of $n$ (with the obvious inequality $\lambda_0(Y_0^*) \le 1$). Hence, for every $\varepsilon > 0$, there exists a positive constant $C_\varepsilon$ independent of $n$ such that
\begin{equation} \label{ineq[lambda_m]2}
\lambda_m(Y_m^*) 
\ \le \
(C_\varepsilon m+1)^{1/6 + \varepsilon}, \quad \forall m \in \NN_0.
\end{equation}
 The minimum distance between consecutive roots $d_{m+1}$ satisfies the inequalities
$
\frac{\pi\sqrt{2}}{\sqrt{2m + 3}}  <  d_{m+1}  <  \frac{\sqrt{21}}{\sqrt{2m + 3}},
$
see \cite[pp. 130--131]{Sz39}. 
The sequences $Y_m^*$ have been used in \cite{Ch18} for sparse quadrature for non-fully discrete integration with the measure $\gamma$, and in \cite{EST18} non-fully discrete polynomial interpolation approximation with the measure $\gamma$.

The inequality \eqref{ineq[lambda_m]2}  can be improved if  $Y_{m-2}^*$ is slightly modified by the ``method of adding points" suggested by Szabados \cite{Sza1997} (for details, see also \cite[Section 11]{Lu07}). More precisely, for $n > 2$, he added to $Y_{m-2}^*$ two points $\pm \zeta_{m-1}$, near $\pm a_{m-1}(g)$,  which are defined by the condition $|H_{m-1} \sqrt{g}|(\zeta_{m-1})= \|H_{m-1} \sqrt{g}\|_{L_\infty(\RR)}$. By this way, he obtained the strictly increasing sequences 
\[
\bar{Y}_m^*:= \{-\zeta_m,y_{m-2;0}^*,\cdots,y_{m-2,m-2}^*,\zeta_m\}
\]
for which the sequence $(\bar{(Y}_m^*)_{n=0}^\infty$ satisfies the inequality
\begin{equation} \nonumber
\lambda_m(\bar{Y}_m^*) 
\ \le \
C\log (n-1) \quad (n > 2)
\end{equation} 
which yields that for every $\varepsilon > 0$, there exists a positive constant $C_\varepsilon$ independent of $n$ such that
\begin{equation} \nonumber
\lambda_m(\bar{Y}_m^*) 
\ \le \
(C_\varepsilon m+1)^\varepsilon, \quad  \forall m \in \NN_0.
\end{equation} 

For a given sequence $(Y_m)_{m=0}^\infty$, we define the univariate operator $\Delta^{{\rm I}}_m$ for $m \in \NN_0$ by
\begin{equation} \nonumber
\Delta^{{\rm I}}_m
:= \
I_m - I_{m-1},
\end{equation} 
with the convention $I_{-1} = 0$, and the univariate operator $\Delta^{{\rm I}*}_m$ for even $m \in \NN_0$ by
\begin{equation} \nonumber
	\Delta^{{\rm I}*}_m
	:= \
	I_m - I_{m-2},
\end{equation} 
with the convention $I_{-2} = 0$. 

 \begin{lemma} \label{lemma[Delta_{bs}]}
 	Assume that $(Y_m)_{m=0}^\infty$ is a sequence satisfying the condition \eqref{ineq[lambda_m]} for some positive numbers $\varkappa$ and $C$. Then for every $\varepsilon > 0$, there exists a positive constant $C_\varepsilon$ independent of $m$ such that for every function  $v$ on $\RR$,
 	\begin{equation} \label{ineq[Delta]1}
 \|\Delta^{{\rm I}}_m(v){\sqrt{g}}\|_{L_\infty(\RR)}	
 	\ \le \
 	(C_\varepsilon m+1)^{\varkappa + \varepsilon} \|v{\sqrt{g}}\|_{L_\infty(\RR)}	, \quad \forall m \in \NN_0,
 	\end{equation}
 whenever the norm in the right-hand side is finite.
\end{lemma}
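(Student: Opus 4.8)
The plan is to bound the operator $\Delta^{{\rm I}}_m = I_m - I_{m-1}$ in the weighted sup-norm by exploiting the Lebesgue-constant bound \eqref{ineq[lambda_m]} and the fact that $I_{m-1}$ reproduces polynomials. The key observation is that $\Delta^{{\rm I}}_m(v)$ is a polynomial of degree $\le m$, so both operators $I_m$ and $I_{m-1}$ can each be controlled directly by the respective Lebesgue constants $\lambda_m(Y_m)$ and $\lambda_{m-1}(Y_{m-1})$. Since by definition $\|I_m(v)\sqrt{g}\|_{L_\infty(\RR)} \le \lambda_m(Y_m)\,\|v\sqrt{g}\|_{L_\infty(\RR)}$, and similarly for $I_{m-1}$, the triangle inequality immediately gives
\begin{equation} \nonumber
\|\Delta^{{\rm I}}_m(v)\sqrt{g}\|_{L_\infty(\RR)}
\ \le \
\big(\lambda_m(Y_m) + \lambda_{m-1}(Y_{m-1})\big)\,\|v\sqrt{g}\|_{L_\infty(\RR)}.
\end{equation}

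First I would invoke the hypothesis \eqref{ineq[lambda_m]} to replace each Lebesgue constant by $(Cm+1)^\tau$, noting that $\lambda_{m-1}(Y_{m-1}) \le (C(m-1)+1)^\tau \le (Cm+1)^\tau$ since the bound is monotone in the index. This yields $\|\Delta^{{\rm I}}_m(v)\sqrt{g}\|_{L_\infty(\RR)} \le 2(Cm+1)^\tau\,\|v\sqrt{g}\|_{L_\infty(\RR)}$. The only remaining task is to absorb the factor of $2$ (and more generally any fixed polynomial-in-$m$ loss) into the exponent by paying an arbitrarily small $\varepsilon$. This is a routine step: for any $\varepsilon > 0$ one has $2(Cm+1)^\tau \le (C_\varepsilon m + 1)^{\tau + \varepsilon}$ for a suitable constant $C_\varepsilon$ depending only on $C$, $\tau$, and $\varepsilon$, since the left side grows like $m^\tau$ while the right side grows like $m^{\tau+\varepsilon}$ and hence dominates for large $m$, with the constant $C_\varepsilon$ chosen large enough to handle all remaining (finitely many small) values of $m$.

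The finiteness caveat ``whenever the norm in the right-hand side is finite'' is handled automatically: if $\|v\sqrt{g}\|_{L_\infty(\RR)} < \infty$, then the point evaluations $v(y_{m;k})$ appearing in \eqref{I_(v)} are all finite because $\sqrt{g}(y_{m;k}) > 0$ at each of the finitely many interpolation nodes, so $I_m(v)$ and $I_{m-1}(v)$ are well-defined polynomials and the estimates above apply verbatim.

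I do not anticipate a genuine obstacle here; the lemma is essentially a direct consequence of the definition of the Lebesgue constant combined with the triangle inequality, and the passage from the constant factor to the $\varepsilon$-inflated exponent is the standard device already used to derive \eqref{ineq[lambda_m]2} from the Matjila--Szabados bound. The one point requiring a little care is confirming that the Lebesgue-constant estimate \eqref{ineq[lambda_m]} is indeed the correct quantity bounding $\|I_m(v)\sqrt{g}\|_{L_\infty(\RR)}$ for vector-valued $v$; this follows because the operator $I_m$ acts coefficientwise through the scalar cardinal functions $\ell_{m;k}$, so the $X$-valued weighted sup-norm is governed by exactly the same supremum $\sum_{k=0}^m |\ell_{m;k}\sqrt{g}|$ that defines $\lambda_m(Y_m)$ in the scalar case.
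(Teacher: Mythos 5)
Your proof is correct and is essentially identical to the paper's: the paper likewise writes $\|\Delta^{{\rm I}}_m(v)\sqrt{g}\|_{L_\infty(\RR)} \le 2(Cm+1)^\tau \|v\sqrt{g}\|_{L_\infty(\RR)}$ via the triangle inequality and the Lebesgue-constant bound \eqref{ineq[lambda_m]}, and then absorbs the factor $2$ into the exponent exactly as in the derivation of \eqref{ineq[lambda_m]2}. No gaps.
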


\begin{proof} 
From the assumptions we have that
	\begin{equation} \nonumber
\|\Delta^{{\rm I}}_m(v){\sqrt{g}}\|_{L_\infty(\RR)}	
\ \le \
2(Cm+1)^\varkappa \|v{\sqrt{g}}\|_{L_\infty(\RR)}, \quad \forall m \in \NN_0,
\end{equation}
 which similarly to \eqref{ineq[lambda_m]2} gives \eqref{ineq[Delta]1}. 
\hfill
\end{proof}

In order to have a correct definition of interpolation operator let us
impose some necessary restrictions on $v \in L_2(\RRi,X,\gamma)$.
Let $\Ee$ be a $\gamma$-measurable subset in $\RRi$ such that $\gamma(\Ee) =1$ and $\Ee$ contains  all $\by \in \RRi$ with $|\by|_0 < \infty$, where $|\by|_0$ denotes the number of nonzero components $y_j$ of $\by$. If $v$ is a function defined on $\Ee$ and takes values in $X$, and if $v$ is a representative of a strongly $\gamma$-measurable function on $\Ee$, we can extend $v$ as a strongly $\gamma$-measurable function defined on the whole $\RRi$ in a certain way. For convenience, this extension is again denoted by $v$ which can be considered as an element in $v \in L_2(\RRi,X;\gamma)$. Denote by 
$L_2^\Ee(\RRi,X, \gamma)$ the set of all such functions.
We will treat  all elements 
$v \in L_2^\Ee(\RRi,X, \gamma)$ as representatives of elements in $L_2(\RRi,X, \gamma)$.  Throughout this  and next sections, we fix  a set $\Ee$. 

For  $v \in \Ll_2^\Ee(X)$, we introduce the tensor product operator $\Delta^{{\rm I}}_\bs$ for $\bs \in \FF$ by
\begin{equation} \nonumber
\Delta^{{\rm I}}_\bs(v)
:= \
\bigotimes_{j \in \NN} \Delta^{{\rm I}}_{s_j}(v),
\end{equation}
where the univariate operator
$\Delta^{{\rm I}}_{s_j}$ is applied to the univariate function $v$ by considering $v$ as a 
function of  variable $y_i$ with the other variables held fixed. From the definition of $\Ll_2^\Ee(X)$ one can see that the operators  $\Delta^{{\rm I}}_\bs$ are well-defined for all $\bs \in \FF$.

Next, we introduce the interpolation operator $I_\Lambda$ for a given finite set $\Lambda \subset \FF$ by
\begin{equation} \nonumber
I_\Lambda
:= \
\sum_{\bs \in \Lambda} \Delta^{{\rm I}}_\bs.
\end{equation}

 Let Assumption II hold for Hilbert spaces $X^1$ and $X^2$. We introduce the interpolation operator $\Ii_G: \Ll_2^\Ee(X^2) \to \Vv(G)$ for a given finite set $G \subset \NN_0 \times \FF$ by
\begin{equation} \nonumber
\Ii_G v
:= \
\sum_{(k,\bs) \in G} \delta_k \Delta^{{\rm I}}_\bs (v).
\end{equation}

The interpolation operators $\Delta^{{\rm I}*}_\bs$ for $\bs \in \FF_\rev$, $I^*_\Lambda$ for a finite set $\Lambda \subset \FF_\rev$, and $\Ii^*_G: \Ll_2^\Ee(X^2) \to \Vv(G)$ for a  finite set $G \subset \NN_0 \times \FF_\rev$, are defined in a similar way  by  replacing $\Delta^{{\rm I}}_{s_j}$ with $\Delta^{{\rm I}*}_{s_j}$, $j \in \NN$.
  
Notice that $\Ii_G v$  is a linear (non-adaptive) method of fully discrete polynomial interpolation approximation  which is the sum taken over the  indices set $G$, of mixed tensor products of  dyadic scale successive differences of ``spatial" approximations to $v$, and of   successive differences of their parametric Lagrange interpolating polynomials. It has been introduced in \cite{Di15} (see also \cite{Di18b}). A similar  construction for the multi-index stochastic collocation method for computing the expected value of a functional of the solution to elliptic PDEs with random data has been introduced in \cite{HNTT2016a,HNTT2016b}  by using Clenshaw-Curtis points for quadrature.

A set $\Lambda \subset \FF$ $(\Lambda \subset \FF_\rev)$  is called downward closed in $\FF$ 
(in $\FF_\rev$) if the inclusion $\bs \in \Lambda$ yields the inclusion $\bs' \in \Lambda$ for every $\bs' \in \FF$ ($\bs' \in \FF_\rev$) such that $\bs' \le \bs$. The inequality $\bs' \le \bs$ means that $s_j' \le s_j$, $j \in \NN$.   A sequence 
$(\sigma_\bs)_{\bs \in \FF}$ ($(\sigma_\bs)_{\bs \in \FF_\rev}$) is called increasing if $\sigma_{\bs'} \le \sigma_\bs$ for $\bs' \le \bs$. Put $R_\bs:= \{\bs' \in \FF: \bs' \le \bs\}$ and 
$R_{\rev;\bs}:= \{\bs' \in \FF_\rev: \bs' \le \bs\}$ .	

\begin{theorem}\label{thm[coll-approx]} 
	Let $0 < p \le 2$.  Let Assumption II hold for Hilbert spaces $X^1$ and $X^2$. Assume that
	$(Y_m)_{m \in \NN_0}$ is a sequence satisfying the condition \eqref{ineq[lambda_m]} for some positive numbers $\varkappa$ and $C$.  	Let $v \in \Ll_2^\Ee(X^2)$ be represented by the series \eqref{HermiteSeriesV}. Assume that for $r=1,2$ there exist increasing sequences $(\sigma_{r;\bs})_{\bs \in \FF}$  of numbers strictly larger than $1$ such that 
	\begin{equation} \label{summability}
	\sum_{\bs\in\FF} (\sigma_{r;\bs} \|v_\bs\|_{X^r})^2 <\infty \ \ \ \text{and} \ \ \
	\left(p_{\bs}(2\theta,\lambda)\sigma_{r;\bs}^{-1}\right)_{\bs \in \FF} \in \ell_{q_r}(\FF)
	\end{equation}		
	  for some $0<q_1\leq q_2 <\infty$ with $q_1 <2$, where
	\begin{equation} \label{theta,lambda2}
	\theta := \varkappa + \varepsilon + 5/4, \quad  
	\lambda :=  
	\max\left(C_{\infty,2}, C_{2,\infty},C_\varepsilon, 1 \right),
	\end{equation}
	$C_{\infty,2}$, $C_{2,\infty}$ are as in Lemma~\ref{lemma[N-ineq]}, $\varepsilon$ is arbitrary positive number and  $C_\varepsilon$ is as in Lemma~\ref{lemma[Delta_{bs}]}. For $\xi >1$, let $G(\xi)$ be the set defined as in \eqref{G_(xi)}.
	 	
	  Then there exists a constant $C$ such that for each $n \in \NN$, there exists a number $\xi_n$  such that  
	  for the interpolation operator $\Ii_{G(\xi_n)}: \Ll_2^\Ee(X^2) \to \Vv(G(\xi_n))$, we have that $\dim \Vv(G(\xi_n)) \le  n$ and 
	\begin{equation} \label{u-I_Gu, p le 2}
	\|v -\Ii_{G(\xi_n)}v\|_{\Ll_p(X^1)} \leq 
	C \times
	\begin{cases}
		n^{-\alpha} &\text{if } \alpha \leq  1/q_2 - 1/2,
		\\
		n^{-\beta} \log n &\text{if } \alpha >  1/q_2 - 1/2.
	\end{cases} 
	\end{equation}
	The rate $\alpha$ corresponds to the approximation of a single function in $X^2$ as given by \eqref{spatialappnX}.
	The rate $\beta$ is given by 
\begin{equation} 	\label{[beta]1}
\beta := \left(\frac 1 {q_1} - \frac 1 2 \right)\frac{\alpha}{\alpha + \delta}, \ \ \ 
\delta := \frac 1 {q_1} - \frac 1 {q_2}.
\end{equation}	
\end{theorem}

\begin{proof} 
Clearly, by  the inequality $\|\cdot\|_{\Ll_p(X^1)} \le \|\cdot\|_{\Ll_2(X^1)}$ it is sufficient to prove the theorem for $p = 2$. 		
 By Lemmata~\ref{lemma[AbsConv]} and \ref{lemma[AbsConv]2} the series \eqref{HermiteSeriesV} and \eqref{[delta-series]} 
converge absolutely, and therefore, unconditionally  in  the Hilbert space $\Ll_2(X^1)$ to $v$. We have that $\Delta^{{\rm I}}_\bs H_{\bs'} = 0$ for every $\bs \not\le \bs'$. Moreover, if  $\Lambda \subset \FF$ is  downward closed set in $\FF$, then $I_{\Lambda} H_\bs = H_\bs$ for every $\bs \in \Lambda$, and 
hence we can write
\begin{equation}\label{I_Lambda}
I_{\Lambda}v
\ = \
I_{\Lambda}\Big(\sum_{ \bs \in \FF} v_\bs \,H_\bs \Big)
\ =  \
\sum_{ \bs \in \FF} v_\bs \,I_{\Lambda} H_\bs
\ =  \
\sum_{ \bs \in \Lambda}  v_\bs \, H_\bs
\ + \ \sum_{\bs \not\in \Lambda} v_\bs \, I_{\Lambda \cap R_\bs}\, H_\bs.
\end{equation}

Let $\xi>1$ be given. For $k \in  \NN_0$, put
\begin{equation} \nonumber
	\Lambda_k(\xi)
	:= \ 
	\begin{cases}
		\big\{\bs \in \FF: \,\sigma_{2;\bs}^{q_2} \leq 2^{-k}\xi\big\} \quad &{\rm if }  \ 
		\alpha \le 1/q_2 - 1/2;\\
		\big\{\bs \in \FF: \, \sigma_{1;\bs}^{q_1} \le \xi , \  
		\sigma_{2;\bs} \leq 2^{- \tau k}\xi^\vartheta \big\} \quad  & {\rm if }  \ 
		\alpha > 1/q_2 - 1/2.
	\end{cases}
\end{equation}
For $\xi > 1$, we put $k(\xi):= \lfloor \log_2 \xi \rfloor$ if $\alpha \le 1/q_2 - 1/2$, and 
	$k(\xi):= \lfloor \vartheta \tau^{-1}\log_2 \xi \rfloor$ if $\alpha > 1/q_2 - 1/2$.
	Observe that $\Lambda_k(\xi) = \emptyset$ for all $k > k(\xi)$, and consequently, 
	we have that
	\begin{equation}  \label{Eq[Ii]}
	\Ii_{G (\xi)} v
	\ = \ 
	\sum_{k=0}^{k(\xi)} \delta_k \Big(\sum_{ \bs \in \Lambda_k(\xi)} \Delta^{{\rm I}}_\bs \Big)v
	\ = \ 
	\sum_{k=0}^{k(\xi)} \delta_k I_{\Lambda_k(\xi)}v.
	\end{equation}
	Since the sequence $(\sigma_{2;\bs})_{\bs \in \FF}$ is increasing,  $\Lambda_k(\xi)$ are downward closed sets in $\FF$, and consequently, the sequence $\big\{\Lambda_k(\xi)\big\}_{k=0}^{k(\xi)}$ is nested in the inverse order, i.e., $\Lambda_{k'} \subset \Lambda_k(\xi)$ if $k' > k$, and 
	$\Lambda_0$ is the largest and $\Lambda_{k(\xi)} = \{0_\FF\}$.
	Therefore, from  the unconditional convergence of the series \eqref{[delta-series]} to $v$,  \eqref{Eq[Ii]} and \eqref{I_Lambda} we derive that
	\begin{equation}\nonumber
	\begin{split}
	\Ii_{G (\xi)} v
	\ &= \
	\sum_{k=0}^{k(\xi)}  \sum_{ \bs \in \Lambda_k(\xi)} \delta_k(v_\bs) \, H_\bs
	\ + \ 
	\sum_{k=0}^{k(\xi)}  \sum_{\bs \not\in \Lambda_k(\xi)} \delta_k(v_\bs) 
	I_{\Lambda_k(\xi) \cap R_\bs}\, H_\bs
	\\[1.5ex]
	\ &= \
	\Ss_{G (\xi)} v
	\ + \ 
	\sum_{k=0}^{k(\xi)}  \sum_{\bs \not\in \Lambda_k(\xi)} \delta_k(v_\bs) 
	I_{\Lambda_k(\xi) \cap R_\bs}\, H_\bs.
	\end{split}
	\end{equation}
	This implies that
	\begin{equation} \label{v-Ii_G}
	v \ - \ \Ii_{G (\xi)} v
	\ = \
	v \ - \ \Ss_{G (\xi)} v
	\ - \ 
\sum_{k=0}^{k(\xi)}  \sum_{\bs \not\in \Lambda_k(\xi)} \delta_k(v_\bs) 
I_{\Lambda_k(\xi) \cap R_\bs}\, H_\bs.
	\end{equation}
	Observe that for $k \le k(\xi)$, if $\bs \not\in\Lambda_k(\xi)$, then $(k,\bs) \not\in G(\xi)$.  Hence, by \eqref{v-Ii_G} it follows that 
	\begin{equation} \label{[|u-Iu|<]1}
		\big\|v- \Ii_{G (\xi)} v\big\|_{\Ll_2(X^1)}
		\ \le \
		\big\|v- \Ss_{G (\xi)} v\big\|_{\Ll_2(X^1)} 
		+  
		\ \sum_{(k,\bs) \not\in G (\xi)} \|\delta_k(v_\bs)\|_{X^1} \, 
		\big\|I_{\Lambda_k(\xi) \cap R_\bs}\, H_\bs\big\|_{L_2(\RRi,\gamma)}.
	\end{equation}

We need the following auxiliary bound for 
$\big\|I_{\Lambda_k(\xi) \cap R_\bs}\, H_\bs\big\|_{L_2(\RRi,\gamma)}$,
\begin{equation} \label{I_Lambda_k}
	\big\|I_{\Lambda_k(\xi) \cap R_\bs}\, H_\bs\big\|_{L_2(\RRi,\gamma)}
	\ \le \
	p_\bs(\theta, \lambda), 
\end{equation}
where
$\theta$ and $\lambda$ are as in \eqref{theta,lambda2}.	
We have that
\begin{equation} \label{I_Lambda_k(xi)}
	\big\|I_{\Lambda_k(\xi) \cap R_\bs}\, H_\bs\big\|_{L_2(\RRi,\gamma)}
	\ \le \
	\sum_{\bs' \in \Lambda_k(\xi) \cap R_\bs}\, \|\Delta^{{\rm I}}_{\bs'} (H_\bs)\big\|_{L_2(\RRi,\gamma)}.
\end{equation}
We estimate the norms in the sum in the right-hand side.  Assuming $\bs \in \FF$ to be such that $\supp (\bs) \subset \{1,...,J\}$,	we have 
$\Delta^{{\rm I}}_{\bs'} (H_\bs) = \prod_{j=1}^J\Delta^{{\rm I}}_{s'_j} (H_{s_j})$.
Since $\Delta^{{\rm I}}_{s'_j} (H_{s_j})$ is a polynomial of degree $\le s'_j$ in variable $y_j$, from Lemma~\ref{lemma[N-ineq]}	we obtain that
\begin{equation} \nonumber
	\begin{split}
		\|\Delta^{{\rm I}}_{\bs'} (H_\bs)\big\|_{L_2(\RRi,\gamma)}
		\ &= \
		\prod_{j=1}^J\|\Delta^{{\rm I}}_{s'_j} (H_{s_j})\|_{L_2(\RR,\gamma)}
		\ = \
		\prod_{j=1}^J\|\Delta^{{\rm I}}_{s'_j} (H_{s_j})\sqrt{g}\|_{L_2(\RR)}
		\\[1.5ex]
		\ &\le \
		p_{\bs'}(\theta,\lambda)  \prod_{j=1}^J\|\Delta^{{\rm I}}_{s'_j} (H_{s_j})\sqrt{g}\|_{L_\infty(\RR)},
	\end{split}
\end{equation}
where $\theta = 1/4$, $\lambda:=  C_{2,\infty}$ and $C_{2,\infty}$ is the constant in Lemma \ref{lemma[N-ineq]}.
Due to the assumption \eqref{ineq[lambda_m]}, we have by Lemmata  \ref{lemma[Delta_{bs}]} and \ref{lemma[H_bs|_infty]} that 
\begin{equation} \nonumber
	\begin{split}
		\|\Delta^{{\rm I}}_{s'_j} (H_{s_j})\sqrt{g}\|_{L_\infty(\RR)}
		\ &\le \
		(1 + C_\varepsilon s'_j)^{\varkappa + \varepsilon} \, \|H_{s_j}\sqrt{g}\|_{L_\infty(\RR)}\\
		\ &\le \
		(1 + C_\varepsilon s'_j)^{\varkappa + \varepsilon} (1 + C_{\infty,2} s_j)^{1/4} \, \|H_{s_j}\sqrt{g}\|_{L_2(\RR)}\\
		\ &= \
		(1 + C_\varepsilon s'_j)^{\varkappa + \varepsilon} (1 + C_{\infty,2} s_j)^{1/4}
	\end{split}
\end{equation}
and consequently,
\begin{equation}  \label{[|Delta_{bs'}|<]4}
	\|\Delta^{{\rm I}}_{\bs'} (H_\bs)\big\|_{L_2(\RRi,\gamma)}
	\ \le \
	p_{\bs'}(\theta,\lambda)
	\ \le \ 
	p_\bs(\theta,\lambda),
\end{equation}
where 
\begin{equation} \label{theta,lambda}
	\theta :=  \varkappa + \varepsilon + 1/4, \quad  
	\lambda :=  
	\max\left(C_{\infty,2}, C_{2,\infty}, C_\varepsilon \right). 
\end{equation}
Substituting $\|\Delta^{{\rm I}}_{\bs'} (H_\bs)\big\|_{L_2(\RRi,\gamma)}$ in 
\eqref{I_Lambda_k(xi)} with the right-hand side of \eqref{[|Delta_{bs'}|<]4}  gives that
\begin{equation} \label{I_Lambda_k-1}
	\big\|I_{\Lambda_k(\xi) \cap R_\bs}\, H_\bs\big\|_{L_2(\RRi,\gamma)}
	\ \le \
	\sum_{\bs' \in \Lambda_k(\xi) \cap R_\bs}\, p_\bs(\theta, \lambda) 
	\ \le \
	\min \big(|\Lambda_k(\xi) |,|R_\bs|\big)\, p_\bs(\theta, \lambda),
\end{equation}	
where $\theta$ and $\lambda$ are as in \eqref{theta,lambda}. Hence, by the inequality 
$|R_\bs|\le p_\bs(1,1)$ we get	
\eqref{I_Lambda_k}.

 From  \eqref{[|u-Iu|<]1} and \eqref{I_Lambda_k} it follows that 
\begin{equation} \label{[|v-Iv|}
	\big\|v- \Ii_{G (\xi)} v\big\|_{\Ll_2(X^1)}
	\ \le \
	\big\|v- \Ss_{G (\xi)} v\big\|_{\Ll_2(X^1)} 
	+  
	A(\xi).
\end{equation}
where
\begin{equation} \label{A(xi)}
	A(\xi):= \ \sum_{(k,\bs) \not\in G (\xi)} \|\delta_k(v_\bs)\|_{X^1} \, 
		p_\bs(\theta, \lambda), 
	\end{equation}
	where $\theta$ and $\lambda$ are as in \eqref{theta,lambda2}.	In the next step, we employ the inequality \eqref{[|v-Iv|} to establish bounds for $\big\|v- \Ii_{G (\xi)} v\big\|_{\Ll_2(X^1)}$.

	{\it Let us first consider the case $\alpha \le 1/q_2  - 1/2$.}	
Lemma  \ref{lemma[L_p-approx]}  gives
	\begin{equation}  \label{v-Ss_Gv}
	\Big\|v- \Ss_{G (\xi)} u\Big\|_{\Ll_2(X^1)}
	\ \le \ 
	C\,  \xi^{-\alpha}.
	\end{equation}
Let us estimate  the  term $A(\xi)$  in the right-hand side of \eqref{[|v-Iv|}. 
	By using  \eqref{delta-approx-propertyX}  we  derive that
	\begin{equation} \nonumber
	\begin{split}
A(\xi)
	\ &\le \
	C	\sum_{(k,\bs) \not\in G (\xi)} 2^{-\alpha k}p_\bs(\theta, \lambda) \|v_\bs\|_{X^2}
	\ = \
	C \sum_{\bs \in \FF} p_\bs(\theta, \lambda)  \|v_\bs\|_{X^2} \ \sum_{2^k >\xi \sigma_{2;\bs}^{-q_2}}  2^{-\alpha k}
	\\[1.5ex]
	\ &\le \
	C \sum_{\bs \in \FF} p_\bs(\theta, \lambda)  \|v_\bs\|_{X^2} \ (\xi \sigma_{2;\bs}^{-q_2})^{-\alpha}
	\ \le \
	C \xi^{- \alpha}\, \sum_{\bs \in \FF} p_\bs(\theta, \lambda)  \sigma_{2;\bs}^{q_2\alpha}\|v_\bs\|_{X^2}.
	\end{split}
	\end{equation}
	By  the inequalities  $2(1 - q_2\alpha) \ge q_2$ and $\sigma_{2;\bs} >1$ and the assumptions we have that
	\begin{equation} \nonumber
	\begin{split}
	\sum_{\bs \in \FF} p_\bs(\theta, \lambda)  \sigma_{2;\bs}^{q_2\alpha}\|v_\bs\|_{X^2}
	\ &\le \
	\left(\sum_{\bs \in \FF}  (\sigma_{2;\bs}\|v_\bs\|_{X^2})^2\right)^{1/2}
	\left(\sum_{\bs \in \FF} p_\bs^2(\theta, \lambda)  \sigma_{2;\bs}^{-2(1 - q_2\alpha)}\right)^{1/2}
	\\[1.5ex]
	\ &\le \
	\left(\sum_{\bs \in \FF}  (\sigma_{2;\bs}\|v_\bs\|_{X^2})^2\right)^{1/2}
	\left(\sum_{\bs \in \FF} p_\bs(2\theta,\lambda) \sigma_{2;\bs}^{- q_2}\right)^{1/2}
	\ = \
	\ C \ < \infty.
	\end{split}
	\end{equation}	
Thus, we obtain the estimates
	\begin{equation} \nonumber
A(\xi)
\ \le \
C \xi^{-\alpha}.
\end{equation}
This together with \eqref{[|u-Iu|<]1} and \eqref{[|v-Iv|} implies that
\begin{equation} \nonumber
\|v- \Ii_{G(\xi)} u\|_{\Ll_2(X^1)}
\  \le \ C \xi^{-\alpha}.
\end{equation}
  Hence, similarly to \eqref{dimVGxi}--\eqref{case<1/q_2}, for each $n \in \NN$ we can find  a number $\xi_n$  such that  $\dim \Vv(G(\xi_n)) \le  n$ and 
\begin{equation} \label{u-I_Gu, alpha<}
\|v -\Ii_{G(\xi_n)}v\|_{\Ll_p(X^1)} \leq Cn^{-\alpha}, \quad \alpha \le 1/q_2 - 1/2.
\end{equation}

{\it We now consider the case $\alpha > 1/q_2  - 1/2$.}
Lemma \ref{lemma[L_p-approx]} gives 
\begin{equation} \label{v - Ss_G}
	\big\|v- \Ss_{G (\xi)} v\big\|_{\Ll_2(X^1)} 
	\  \le \  C \xi^{-(1/q_1 - 1/2)}.
\end{equation}
 
We split $A(\xi)$ into two sums as
\begin{equation*} 
	A(\xi)
	= 
	 A_1(\xi) + A_2(\xi),
\end{equation*}
where
\begin{equation*} 
	A_1(\xi)
	:= 
	\sum_{\sigma_{1;\bs}^{q_1} > \xi, \  
		\sigma_{2;\bs} \leq 2^{- \tau k}\xi^\vartheta}   \delta_k(v_\bs) \, 
p_\bs(\theta, \lambda),	
\end{equation*}
and
\begin{equation*} 
A_2(\xi)
:= 
\sum_{ 
	\sigma_{2;\bs} > 2^{- \tau k}\xi^\vartheta}   \delta_k(v_\bs) \, 
p_\bs(\theta, \lambda).
\end{equation*}
We get by Assumption II,
\begin{equation} \label{A_1<}
	\begin{split}
		A_1(\xi)
		\ &\le \ 	
			\sum_{\sigma_{1;\bs}^{q_1} > \xi}  \|\delta_k(v_\bs)\|_{X^1}	 
	p_\bs(\theta, \lambda)
		\\
		\  &\le  \
		C \sum_{\sigma_{1;\bs}^{q_1} > \xi} \|v_\bs\|_{X^1} 
		\sum_{k=0}^{k(\xi)}\, p_\bs(\theta,\lambda)
		\\
		\ &\le \ 	
		C	\log \xi \sum_{\sigma_{1;\bs}^{q_1} > \xi}
		\|v_\bs\|_{X^1}	 \, p_\bs(\theta,\lambda).
	\end{split}
\end{equation}
We obtain by the H\"older inequality and the  hypothesis of the theorem,
\begin{equation*} 
	\begin{split}
		\sum_{\sigma_{1;\bs}^{q_1} > \xi}   \|v_\bs\|_{X^1}  \, p_\bs(\theta,\lambda)
		\ &\le \
		\left(\sum_{\sigma_{1;\bs}^{q_1} > \xi} (\sigma_{1;\bs}\|v_\bs\|_{X^1})^2\right)^{1/2}
		\left(\sum_{\sigma_{1;\bs}^{q_1} > \xi} p_\bs^2(\theta,\lambda) \sigma_{1;\bs}^{-2}\right)^{1/2}
		\\[1.5ex]
		\ &\le \
		C\,
		\left(\sum_{\sigma_{1;\bs}^{q_1} > \xi} p_\bs^2(\theta,\lambda) \sigma_{1;\bs}^{-q_1} 
		\sigma_{1;\bs}^{-(2- q_1)}\right)^{1/2}
		\\[1.5ex]
		\ &\le \
		C (\xi)^{-(1/q_1 - 1/2)}
		\left(\sum_{\bs \in \FF} p_\bs(2\theta,\lambda) \sigma_{1;\bs}^{-q_1} \right)^{1/2}
		\\[1.5ex]
		\ &\le \
		C \xi^{-(1/q_1 - 1/2)}.
	\end{split}
\end{equation*} 
This and \eqref{A_1<} yield that
\begin{equation} \label{A_1}
	A_1(\xi)
	\  \le \  C \xi^{-(1/q_1 - 1/2)} \log \xi.
\end{equation}

We now give a bound for $A_2(\xi)$. Observe that $\vartheta \alpha/\tau = 1/q_1 - 1/2$ and 
$\alpha/\tau =1 -  q_2/2$. 
Hence,  employing \eqref{delta-approx-propertyX}, the assumption~\eqref{summability} and  the H\"older inequality we get 
\begin{equation*} 
	\begin{split}
	A_2(\xi)
		\ &\le \
		\sum_{\bs \in \FF} \
		\sum_{ 2^k > \left(\xi^\vartheta\sigma_{2;\bs}^{-1}\right)^{1/\tau} }
		\|	\delta_k(v_\bs)\|_{X^1} \, 
		\big\|I_{\Lambda_k(\xi) \cap R_\bs}\, H_\bs\big\|_{L_2(\RRi,\gamma)}
		\\[1.5ex]
		\ &\le \
		C \, \sum_{\bs \in \FF} \
		\sum_{ 2^k > \left(\xi^\vartheta\sigma_{2;\bs}^{-1}\right)^{1/\tau} }
		2^{-\alpha k} \|v_\bs\|_{X^2}\, p_\bs(\theta,\lambda)							
		\\[1.5ex]
		\ &\le \
		C\,
		\sum_{\bs \in \FF} \
		\big(  \sigma_{2;\bs}^{-1/\tau} \xi^{\vartheta /\tau}\big)^{- \alpha }\|v_\bs\|_{X^2}\, p_\bs(\theta,\lambda)
		\\[1.5ex]
		\ &\le \
		C\,
		\xi^{- \vartheta \alpha/\tau }\sum_{\bs \in \FF} \
		\sigma_{2;\bs}^{\alpha/\tau} 
		\|v_\bs\|_{X^2}\, p_\bs(\theta,\lambda)
		\\[1.5ex]
		\ &\le \
		C\,
		\xi^{- (1/q_1 - 1/2)}\	\sum_{\bs \in \FF} \
		\sigma_{2;\bs}^{1 - q_2/2} 
		\|v_\bs\|_{X^2}\, p_\bs(\theta,\lambda)
		\\
		\ &\le \
		C\,
		\xi^{- (1/q_1 - 1/2)}\	
		\left(\sum_{\bs \in \FF}  (\sigma_{2;\bs}\|v_\bs\|_{X^2})^2\right)^{1/2}
		\left(\sum_{\bs \in \FF}  p_\bs^2(\theta,\lambda) \sigma_{2;\bs}^{-2}\right)^{1/2}
		\\
		\ &\le \
		C\,
		\xi^{- (1/q_1 - 1/2)}.
	\end{split}
\end{equation*}
This proves that 
\begin{equation} \label{A_2}
	A_2(\xi)
	\  \le \  C \xi^{-(1/q_1 - 1/2)}.
\end{equation} 

Combining \eqref{A_1}, \eqref{A_2}  and \eqref{v - Ss_G} leads to the estimate
\begin{equation} \label{xi-rate4}
\|v- \Ii_{G(\xi)} v\|_{\Ll_2(X^1)}
\  \le \  C \xi^{-(1/q_1 - 1/2)} \log \xi.
\end{equation}

We   estimate   the dimension of the space $\Vv(G(\xi))$.  Put $q:= \tau q_2 $ and define $q'$ by $1/q' + 1/q = 1$. 
Since $\alpha > 1/q_2 - 1/2$, we have that $q >1$, consequently, by using  the H\"older inequality and \eqref{summability}, we derive that
\begin{equation} \nonumber
	\begin{split}
		\dim \Vv(G(\xi))
		\ &\le \
		\ \sum_{(k,\bs) \in G(\xi)} \dim V_{2^k} 
		\ \le \ 
		\sum_{\sigma_{1;\bs}^{q_1} \le \xi} \quad 
		\sum_{2^{\tau k} \sigma_{2;\bs} \leq \xi^\vartheta} 2^k 
		\\ \  &\le \
		2 \sum_{\sigma_{1;\bs}^{q_1} \le \xi} \xi^{\vartheta/\tau}\sigma_{2;\bs}^{-1/\tau} 
		\ \le \
		2 \xi^{\vartheta/\tau} \left(\sum_{\sigma_{1;\bs}^{q_1} \le \xi}\sigma_{2;\bs}^{-q_2} \right)^{1/q}
		\left(\sum_{\sigma_{1;\bs}^{q_1} \le \xi} 1 \right)^{1/q'}
		\\ \  &\le 
		2 \xi^{\vartheta/\tau}\left(\sum_{\bs \in \FF} \sigma_{2;\bs}^{-q_2} \right)^{1/q}
		\left(\sum_{\bs \in \FF} \xi \sigma_{1;\bs}^{-q_1}  \right)^{1/q'}
		\\ \  &=
		M \xi^{\vartheta/\tau + 1/q'} 
		\ =  \
		M \xi^{1 + \delta/\alpha},
	\end{split}
\end{equation}
where
$
M:=  2 \big\|\big(\sigma_{2;\bs}^{-1}\big)\big\|_{\ell_{q_2}(\FF)}^{q_2/q} \,
\big\|\big(\sigma_{1;\bs}^{-1}\big)\big\|_{\ell_{q_1}(\FF)}^{q_1/q'}.
$

For any $n \in \NN$, letting $\xi_n$ be a number satisfying the inequalities 
\begin{equation} \label{[xi_n]}
	M\, \xi_n^{1 + \delta/\alpha}
	\ \le \
	n
	\ < \ 2M\, \xi_n^{1 + \delta/\alpha},
\end{equation}
we derive that  $\dim \Vv(G(\xi_n)) \le  n$.
On the other hand, by \eqref{[xi_n]},
	\begin{equation} \nonumber
		\xi_n^{-(1/q_1 - 1/2)} 
		\asymp  n^{-(1/q_1 - 1/2)\frac{\alpha}{\alpha + \delta}}
		\ = \ n^{-\beta}.
	\end{equation}
	This together with \eqref{xi-rate4} proves that
	\begin{equation} \nonumber
		\|v -\Ii_{G(\xi_n)}v\|_{\Ll_p(X^1)} \leq Cn^{-\beta}\log n, \quad \alpha > 1/q_2 - 1/2.
	\end{equation}

By combining the last estimate and \eqref{u-I_Gu, alpha<} we derive  \eqref{v-Ii_G}.
\hfill
\end{proof}

	Denote by $\Gamma_\bs$ and $\Gamma(\Lambda)$ the set of interpolation points in the operators $\Delta^{{\rm I}}_\bs$ and $I_\Lambda$, respectively. We have that 
	$\Gamma_\bs = \{\by_{\bs - \be;\bm}:  e \in \EE_\bs; \ m_j = 0,...,s_j - e_j, \ j \in \NN \}$ and $\Gamma(\Lambda) = \cup_{\bs \in \Lambda} \Gamma_\bs$, where $\EE_\bs$ is the subset in $\FF$ of all $\be$ such that $e_j$ is $1$ or $0$ if $s_j > 0$, and $e_j$ is $0$ if $s_j = 0$, and $\by_{\bs;\bm}:= (y_{s_j;m_j})_{j \in \NN}$. 
	 
	\begin{remark}	\label{rm 3.1a}
{\rm		
(i) Observe that  the operator $\Ii_{G(\xi_n)}$ in Theorem \ref{thm[coll-approx]} can be represented in the form of a multilevel approximation method with $k_n$ levels:
		\begin{equation}  \nonumber
		\Ii_{G (\xi_n)} 
		\ = \ 
		\sum_{k=0}^{k_n} \delta_k I_{\Lambda_k(\xi_n)},
		\end{equation}
		where  $k_n:= \lfloor \log_2 \xi_n \rfloor$ if $\alpha \le 1/q_2 - 1/2$, and  
		$k_n:= \lfloor \vartheta \tau^{-1}\log_2 \xi_n \rfloor$ if $\alpha > 1/q_2 - 1/2$, and for $k \in \NN_0$ and $\xi>1$,
		\begin{equation} \nonumber
		\Lambda_k(\xi)
		:= \ 
		\begin{cases}
		\big\{\bs \in \FF: \,\sigma_{2;\bs}^{q_2} \leq 2^{-k}\xi\big\} \quad &{\rm if }  \ 
		\alpha \le 1/q_2 - 1/2;\\
		\big\{\bs \in \FF: \, \sigma_{1;\bs}^{q_1} \le \xi , \  
		\sigma_{2;\bs} \leq 2^{- \tau k}\xi^\vartheta \big\} \quad  & {\rm if }  \ 
		\alpha > 1/q_2 - 1/2.
		\end{cases}
		\end{equation}
		Moreover,  $\Lambda_k(\xi_n)$ are downward closed sets, and consequently, the sequence $\big\{\Lambda_k(\xi_n)\big\}_{k=0}^{k_n}$ is nested in the inverse order, i.e., $\Lambda_{k'}(\xi_n) \subset \Lambda_k(\xi_n)$ if $k' > k$, and 
		$\Lambda_0(\xi_n)$ is the largest and $\Lambda_{k_n}(\xi_n) = \{0_\FF\}$.
	
		\noindent		
(ii) Theorem \ref{thm[coll-approx]} is a non-adaptive  "collocation" extension of  \cite[Theorem 3.1]{BCDS17} for the lognormal case.  The approximant  $\Ii_{G(\xi_n)} v$ belongs to the space  $\Vv(G(\xi_n)) \subset \Vv(X^2)$. The convergence rate $\min(\alpha,\beta)$ of the approximation by $\Ii_{G(\xi_n)} v$  is given in terms of  $n$  where  the thresholding parameter $\xi_n$ is chosen such that  $\dim(\Vv(G(\xi_n)) \le n$.   This rate is the same as the rate of the approximation by the truncated Hermite GPC expansion $\Ss_{G(\xi_n)} v$.  The fully discrete polynomial interpolation approximation of $v \in \Ll_2^\Ee(X^2)$ by operators $\Ii_{G(\xi_n)}$ is  based on the finite point-wise information in $\by$, more precisely, on   
 $|\Gamma(\Lambda_0(\xi_n))|$ of particular values of   $v$ at the interpolation points $\by \in \Gamma(\Lambda_0(\xi_n))$ and the approximations of   $v(\by)$, $\by \in \Gamma(\Lambda_0(\xi_n))$, by  $P_{2^k}v(\by)$ for $k=0,..., k_n$.    Moreover, we have that 
\[
|\Gamma(\Lambda_0(\xi_n))| \le \sum_{\bs \in \Lambda_0(\xi_n)}p_\bs(1,2) = \Oo(n).
\]

	\noindent		
	(iii) Under the assumptions of Theorem 	\ref{thm[coll-approx]},  we have that for every $v \in \Ll_2(X^2)$ and every $G \subset \NN_0 \times \FF$,
	\begin{equation} \nonumber
	\Ii_G v
	\ = \
	\sum_{(k,\bs) \in G}  \Delta^{{\rm I}}_\bs (\delta_k v).
	\end{equation} 
}		
	\end{remark} 

\begin{theorem}\label{thm[coll-approx]ev}
	Let $0 < p \le 2$.	Let Assumption II hold for Hilbert spaces $X^1$ and $X^2$. 
	Let $v \in \Ll_2^\Ee(X^2)$ be represented by the series \eqref{HermiteSeriesVev}.  Assume that
	$(Y_m)_{m \in \NN_0}$ is a sequence satisfying the condition \eqref{ineq[lambda_m]} for some positive numbers $\tau$ and $C$. 
	Assume that for $r=1,2$ there exist increasing sequences $(\sigma_{r;\bs})_{\bs \in \FF_\rev}$ of numbers strictly larger than $1$ such that
	\begin{equation} \nonumber
	\sum_{\bs\in\FF_\rev} (\sigma_{r;\bs} \|v_\bs\|_{X^r})^2 <\infty
	\end{equation}		
	and 
	$(p_{\bs}(2\theta,\lambda)\sigma_{r;\bs}^{-1})_{\bs \in \FF_\rev} \in \ell_{q_r}(\FF_\rev)$  for some $0<q_1\leq q_2 <\infty$ with $q_1 <2$, where $\theta$	and $\lambda$  are as in \eqref{theta,lambda2}. For $\xi>1$, let $G_\rev(\xi)$ be the set defined as in \eqref{G_rev(xi)}.
	
	 	Then there exists a constant $C$ such that for each  $n \in \NN$, there exists a number $\xi_n$  such that  
for the operator $\Ii^*_{G_\rev(\xi_n)}: \Ll_2^\Ee(X^2) \to \Vv(G_\rev(\xi_n))$, we have that 	$\dim \Vv(G_\rev(\xi_n)) \le  n$ and 
	\begin{equation} \label{u-I_Gu, p le 2ev}
	\|v -\Ii^*_{G_\rev(\xi_n)}v\|_{\Ll_p(X^1)} \leq C \times
	\begin{cases}
		n^{-\alpha} &\text{if } \alpha \leq  1/q_2 - 1/2,
		\\
		n^{-\beta} \log n &\text{if } \alpha >  1/q_2 - 1/2.
	\end{cases} 
	\end{equation}
	The rate $\alpha$ corresponds to the approximation of a single function in $X^2$ as given by \eqref{spatialappnX}.
	The rate $\beta$ is given by \eqref{[beta]1}.
\end{theorem}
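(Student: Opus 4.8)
The plan is to mirror the proof of Theorem~\ref{thm[coll-approx]} step by step, replacing $\FF$ throughout by $\FF_\rev$, the index set $G(\xi)$ by $G_\rev(\xi)$, and supplying the error bound for the truncated expansion operator $\Ss$ from Lemma~\ref{lemma[V_p-approx]ev} in place of Lemma~\ref{lemma[L_p-approx]}. As before, $\|\cdot\|_{\Ll_p(X^1)}\le\|\cdot\|_{\Ll_2(X^1)}$ reduces everything to $p=2$. The hypothesis $\big(p_{\bs}(2\theta,\lambda)\sigma_{r;\bs}^{-1}\big)_{\bs\in\FF_\rev}\in\ell_{q_r}(\FF_\rev)$ entails in particular $\big(\sigma_{r;\bs}^{-1}\big)_{\bs\in\FF_\rev}\in\ell_{q_r}(\FF_\rev)$, so Lemmata~\ref{lemma[AbsConv]} and~\ref{lemma[AbsConv]2}, applied with index set $\FF_\rev$, show that the series \eqref{HermiteSeriesVev} and its scale refinement $\sum_{(k,\bs)\in\NN_0\times\FF_\rev}\delta_k(v_\bs)H_\bs$ converge absolutely, hence unconditionally, in $\Ll_2(X^1)$ to $v$. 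This unconditional convergence is what legitimizes interchanging $\delta_k$, $\Delta^{{\rm I}}_\bs$ and the infinite summations below.

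Next I would split into the two regimes of \eqref{G_rev(xi)}. For $\alpha\le 1/q_2-1/2$, set $\Lambda_k:=\{\bs\in\FF_\rev:(k,\bs)\in G_\rev(\xi)\}=\{\bs\in\FF_\rev:\sigma_{2;\bs}^{q_2}\le 2^{-k}\xi\}$; since $(\sigma_{2;\bs})_{\bs\in\FF_\rev}$ is increasing, these are downward closed within $\FF_\rev$, nested in reverse order, and empty for $k>k^\ast:=\lfloor\log_2\xi\rfloor$, so $\Ii_{G_\rev(\xi)}v=\sum_{k=0}^{k^\ast}\delta_k I_{\Lambda_k}v$. Using $\Delta^{{\rm I}}_{\bs'}H_\bs=0$ for $\bs'\not\le\bs$ together with the reproduction property of the telescoped operators on the $\Lambda_k$, I would rewrite this exactly as in \eqref{I_Lambda}, obtaining $\Ii_{G_\rev(\xi)}v=\Ss_{G_\rev(\xi)}v+\sum_{(k,\bs)\notin G_\rev(\xi)}\delta_k(v_\bs)\,I_{\Lambda_k\cap R_\bs}H_\bs$, whence $\|v-\Ii_{G_\rev(\xi)}v\|_{\Ll_2(X^1)}\le\|v-\Ss_{G_\rev(\xi)}v\|_{\Ll_2(X^1)}+\Sigma(\xi)$. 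The first term is controlled by Lemma~\ref{lemma[V_p-approx]ev}, giving $\xi^{-\alpha}$ in this regime and $\xi^{-(1/q_1-1/2)}$ in the second.

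For the remainder $\Sigma(\xi)$ I would reuse the bound $\|I_{\Lambda_k\cap R_\bs}H_\bs\|_{L_2(\RRi,\gamma)}\le p_\bs(\theta,\lambda)$ with $\theta,\lambda$ as in \eqref{theta,lambda2}, which follows verbatim from the Nikol'skii-type inequality (Lemma~\ref{lemma[N-ineq]}), the stability of $\Delta^{{\rm I}}_{s_j}$ (Lemma~\ref{lemma[Delta_{bs}]}) and Cram\'er's bound (Lemma~\ref{lemma[H_bs|_infty]}), combined with the cardinality estimate $|R_\bs\cap\FF_\rev|\le p_\bs(1,1)$. A Cauchy--Schwarz splitting together with \eqref{delta-approx-propertyX} and $\big(p_\bs(2\theta,\lambda)\sigma_{r;\bs}^{-1}\big)\in\ell_{q_r}$ then yields $\Sigma(\xi)\le C\xi^{-\alpha}$ (resp.\ $C\xi^{-(1/q_1-1/2)}$), so $\|v-\Ii_{G_\rev(\xi)}v\|_{\Ll_2(X^1)}\le C\xi^{-\alpha}$ (resp.\ $C\xi^{-(1/q_1-1/2)}$). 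The regime $\alpha>1/q_2-1/2$ is treated with the commutation $\delta_k\Delta^{{\rm I}}_\bs=\Delta^{{\rm I}}_\bs\delta_k$ and the operator $P_{N(\xi,\bs)}$ precisely as in Theorem~\ref{thm[coll-approx]}, all indices now running over $\FF_\rev$. Finally, the dimension count for $\dim\Vv(G_\rev(\xi))$ is identical to that for $\Vv(G(\xi))$ since it uses only the $\ell_{q_r}(\FF_\rev)$-norms of $\sigma_{r;\bs}^{-1}$, and choosing $\xi_n$ as in \eqref{[xi_n]1} (resp.\ \eqref{[xi_n]2}) converts the $\xi$-rates into $n^{-\min(\alpha,\beta)}$ with $\beta$ as in \eqref{[beta]1}.

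The step I expect to be the main obstacle, and the only place requiring more than transcription of Theorem~\ref{thm[coll-approx]}, is the reproduction identity underlying $\Ii_{G_\rev(\xi)}v=\Ss_{G_\rev(\xi)}v+\Sigma(\xi)$. In Theorem~\ref{thm[coll-approx]} the sets $\Lambda_k$ are downward closed in the \emph{full} index set $\FF$, so that $R_\bs\subset\Lambda_k$ whenever $\bs\in\Lambda_k$ and the telescoped operator reproduces $H_\bs$; after restriction to $\FF_\rev$ the $\Lambda_k$ are downward closed only within $\FF_\rev$, while $R_\bs$ contains odd multi-indices absent from $\Lambda_k$. I would therefore first isolate and prove the even-index reproduction/telescoping statement as a univariate fact about $\Delta^{{\rm I}}_{s_j}=I_{s_j}-I_{s_j-1}$ acting on even-degree Hermite polynomials, controlling the odd-degree contributions, and then tensorise it; once that identity is secured, every remaining estimate is word-for-word that of Theorem~\ref{thm[coll-approx]}.
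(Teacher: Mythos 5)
Your plan is exactly the paper's own proof: the paper disposes of this theorem in two sentences, saying that one repeats the proof of Theorem~\ref{thm[coll-approx]} with all index sets taken in $\FF_\rev$ and with Lemma~\ref{lemma[V_p-approx]ev} replacing Lemma~\ref{lemma[L_p-approx]}. You have, moreover, correctly isolated the one step that does not survive this replacement: the identity $I_{\Lambda}H_\bs=H_\bs$ for $\bs\in\Lambda$, which underlies the decomposition \eqref{I_Lambda}, uses that $\Lambda$ is downward closed in the \emph{full} set $\FF$, so that $\sum_{\bs'\le\bs}\Delta^{{\rm I}}_{\bs'}$ telescopes to $\bigotimes_j I_{s_j}$. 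For the sets $\Lambda_k\subset\FF_\rev$, which are downward closed only within $\FF_\rev$, this telescoping breaks, and the paper's own proof does not address the point.

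The difficulty is therefore genuine, but the resolution you defer to --- an ``even-index reproduction/telescoping statement'' for $\Delta^{{\rm I}}_m=I_m-I_{m-1}$ acting on even-degree Hermite polynomials --- cannot be proved, because it is false. Already in one variable, for the downward closed (in $\FF_\rev$) set $\{0,2\}$ one has
\begin{equation*}
\Delta^{{\rm I}}_{0}(H_2)+\Delta^{{\rm I}}_{2}(H_2) \ = \ I_0(H_2)+I_2(H_2)-I_1(H_2) \ = \ H_2-\Delta^{{\rm I}}_{1}(H_2),
\end{equation*}
and with the Gauss--Hermite points $Y_1^*=\{-1,1\}$ (the roots of $H_2$) one computes $I_1(H_2)=0$ and $I_0(H_2)=H_2(0)=-1/\sqrt{2}$, so the missing odd-index difference $\Delta^{{\rm I}}_{1}(H_2)=1/\sqrt{2}$ is a fixed nonzero quantity rather than a controllable remainder. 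Consequently, for indices $\bs$ \emph{inside} the set, the term $v_\bs\bigl(H_\bs-I_{\Lambda_k\cap R_\bs}H_\bs\bigr)$ in the error does not vanish and does not decay as $\xi\to\infty$ (test with $v=H_2(y_1)\,w$, $w\in X^2$). Closing the gap requires an idea genuinely absent from both your sketch and the paper's: for example, redefining the parametric differences over $\FF_\rev$ as $I_{2m}-I_{2m-2}$ so that they do telescope over even indices, or retaining the downward closure of $\Lambda_\rev$ in $\FF$ and showing that the additional odd-index contributions are harmless for the quantity ultimately needed in Theorem~\ref{thm[quadrature]} (the integral, where the symmetry of $Y_m$ annihilates odd parts). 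As written, your argument leaves the theorem unproved at precisely the step you flagged.
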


\begin{proof} The proof of this theorem is similar to the proof of Theorem~\ref{thm[coll-approx]} with some modification. For example, all the indices sets are taken from the sets $\FF_\rev$ and $\NN_0 \times \FF_\rev$  instead $\FF$ and $\NN_0 \times \FF$; estimates similar to \eqref{v-Ss_Gv} and \eqref{v - Ss_G} are given by Lemma~\ref{lemma[V_p-approx]ev} instead Lemma~\ref{lemma[L_p-approx]}. 
	\hfill
\end{proof}

\begin{corollary}\label{corollary[coll-approx]} 
	Let $0 < p \le 2$.
Let $v \in \Ll_2^\Ee(X)$ be represented by the series \eqref{series} for a Hilbert space $X$. 
Assume that
$(Y_m)_{m \in \NN_0}$ is a sequence satisfying the condition \eqref{ineq[lambda_m]} for some positive numbers $\tau$ and $C$. 
Assume that there exists an increasing sequence $(\sigma_\bs)_{\bs \in \FF}$ of numbers strictly larger than $1$ such that
\begin{equation} \nonumber
\sum_{\bs\in\FF} (\sigma_\bs \|v_\bs\|_X)^2 <\infty
\end{equation}		
and $(p_{\bs}(2\theta, \max(2,\lambda))\sigma_\bs^{-1})_{\bs \in \FF} \in \ell_q(\FF)$  for some $0<q<2$, where $\theta$	and $\lambda$  are as in \eqref{theta,lambda2}. 
For $\xi>1$, define
\begin{equation} \label{Lambda(xi)}
\Lambda(\xi):= \{\bs \in \FF: \, \sigma_\bs^q \le \xi\}.
\end{equation}	

	Then there exists a constant $C$ such that for each $n \in \NN$, there exists a number $\xi_n$  such that  $|\Gamma(\Lambda(\xi_n))| \le n$ and 
\begin{equation} \label{u-I_Lambdau, p le 2}
\|v -I_{\Lambda(\xi_n)}v\|_{\Ll_p(X)} \leq Cn^{-(1/q - 1/2)}.
\end{equation}
\end{corollary}
	
\begin{proof}
Similarly to the proof of Theorem~\ref{thm[coll-approx]} it is sufficient to prove \eqref{u-I_Lambdau, p le 2} for $p=2$. In the same way as in proving \eqref{[|u-Iu|<]1}, we can show that
\begin{equation} \nonumber
\big\|v- I_{\Lambda(\xi)} v\big\|_{\Ll_2(X^1)}
\ \le \
\big\|v- S_{\Lambda(\xi)} v\big\|_{\Ll_2(X^1)} 
+  
\sum_{\bs  \not\in \Lambda (\xi)}  
\|v_\bs\|_{X^1} \big\|I_{\Lambda(\xi) \cap R_\bs}\, H_\bs\big\|_{L_2(\RRi,\gamma)},
\end{equation}
where
\begin{equation} \nonumber
S_{\Lambda(\xi)} v 
:= \ 
\sum_{\bs\in {\Lambda(\xi)}} v_\bs H_\bs.
\end{equation}	
By estimating  $\big\|v- S_{\Lambda(\xi)} v\big\|_{\Ll_2(X^1)} $ and $\sum_{\bs  \not\in \Lambda (\xi)}  
\|v_\bs\|_{X^1} \big\|I_{\Lambda(\xi) \cap R_\bs}\, H_\bs\big\|_{L_2(\RRi,\gamma)}$ similarly to \eqref{ineq:|v-v_xi|} and  \eqref{xi-rate4}, respectively, we derive
\begin{equation} \nonumber
\big\|v- I_{\Lambda(\xi)} v\big\|_{\Ll_2(X^1)} 
\  \le \  C \xi^{-(1/q - 1/2)}.
\end{equation}
Since $|\Gamma_\bs| \le \prod_{j \in \NN} (2s_j + 1) = p_\bs(1,2)$, we have from the definition
\begin{equation} \label{Gamma(Lambda(xi))}
|\Gamma(\Lambda(\xi))|
\ \le  \ 
\sum_{\bs \in \Lambda(\xi)} |\Gamma_\bs|  
\ \le  \ 
\sum_{\xi \sigma_\bs^{-q} \ge 1} p_\bs(1,2) 
\ \le \ M \xi,
\end{equation}
where $M:= \sum_{\bs \in \FF} p_\bs(1,2)  \sigma_\bs^{-q}  < \infty$ by the assumption. For any $n \in \NN$, by choosing a number $\xi_n$ satisfying the inequalities 
$M \xi_n \le n <  2M \xi_n$,
we derive \eqref{u-I_Lambdau, p le 2}.
\hfill
\end{proof}

Similarly to Corollary~\ref{corollary[coll-approx]} we have the following

\begin{corollary}\label{corollary[coll-approx]ev}
	Let $v \in \Ll_2^\Ee(X)$ be represented by the series \eqref{HermiteSeriesVev} for a Hilbert space $X$.  
	Assume that
	$(Y_m)_{m \in \NN_0}$ is a sequence satisfying the condition \eqref{ineq[lambda_m]} for some positive numbers $\tau$ and $C$. 
	Assume that there exists an increasing sequence $(\sigma_\bs)_{\bs \in \FF_\rev}$ of numbers strictly larger than $1$ such that
	\begin{equation} \nonumber
	\sum_{\bs\in\FF_\rev} (\sigma_\bs \|v_\bs\|_X)^2 <\infty
	\end{equation}				
and $(p_{\bs}(2\theta, \max(2,\lambda))\sigma_\bs^{-1})_{\bs \in \FF_\rev} \in \ell_q(\FF_\rev)$  for some $0<q<2$, where $\theta$	and $\lambda$  are as in \eqref{theta,lambda2}. 
For $\xi>1$, define
\begin{equation} \label{Lambda_rev(xi)}
\Lambda_\rev(\xi):= \ \Lambda(\xi) \cap \FF_\rev
\ = \
\{\bs \in \FF_\rev: \, \sigma_\bs^q \le \xi\}.
\end{equation}
	
		Then there exists a constant $C$ such that for each $n \in \NN$, there exists a number $\xi_n$  such that  $|\Gamma(\Lambda_\rev(\xi_n))| \le n$ and 
	\begin{equation} \label{u-I_Lambdau, p le 2ev}
	\|v -I^*_{\Lambda_\rev(\xi_n)}v\|_{\Ll_p(X)} \leq Cn^{-(1/q - 1/2)}.
	\end{equation}
\end{corollary}

\begin{remark} \label{remark3.1}
{\rm	
(i) Theorem~\ref{thm[coll-approx]ev} and Corollary~\ref{corollary[coll-approx]ev} 
will be applied in proving the convergence rates of fully and non-fully discrete integration in the next section.

\noindent
(ii)
The bound $\|v -I_{\Lambda_n}v\|_{L_2(\RRi, \Hh,\gamma)} \leq Cn^{-(1/q - 1/2)}$ has been obtained in \cite[Theorem 3.14]{EST18} for a Hilbert space $\Hh$, where $\Lambda_n$ is the set of $\bs$ corresponding to the $n$ largest elements of an $\ell_q$-summable majorant  of  the sequence $(\sigma_\bs^{-1} \, p_\bs(\theta, \lambda))_{\bs \in \FF}$.

\noindent
(iii)
The operators $I_{\Lambda(\xi)}$ and $I^*_{\Lambda_\rev(\xi)}$ represent non-adaptive collocation methods of approximation of $v \in X^1$ based on the particular values $v(\by)$ at the points $\by$ in the grids $\Gamma(\Lambda(\xi))$ and $\Gamma(\Lambda_\rev(\xi))$, respectively. Moreover, the sparsity of $\Gamma(\Lambda_\rev(\xi))$ is much higher than that of $\Gamma(\Lambda(\xi))$: the generating set $\Lambda_\rev(\xi)$ contains only even indices of  $\Lambda(\xi)$. This remarkable property, in particular, plays an important role  in improving the rate of quadrature of the solution to the parameterized elliptic PDEs with lognormal inputs  \eqref{ellip}, see Corollary \ref{corollary[quadrature]pde} and its proof as well as Remark \ref{remark5.1}.
}
\end{remark}

\section{Integration}
\label{Integration}

In this section, we construct general linear fully discrete  methods  for integration  of functions taking values in $X^2$ and having a weighted $\ell_2$-summability of Hermite expansion coefficients  for Hilbert spaces $X^1$ and $X^2$ satisfying a certain "spatial" approximation property, and their bounded linear functionals. In particular,
we give convergence rates for these methods of integration which are derived from results on convergence rate of polynomial interpolation approximation in $\Vv_1(X^1)$  in Theorem~\ref{thm[coll-approx]ev}. We also briefly consider linear non-fully discrete methods  for integration.

  If $v$ is a function defined on $\RR$ taking values in a Hilbert space $X$,  the function  $I_m(v)$ in  \eqref{I_(v)}  generates the quadrature formula defined as
\begin{equation} \nonumber
Q_m(v)
:= \ \int_{\RR} I_m(v)(y) \, \rd \gamma(y)
\ = \
\sum_{k=0}^m\omega_{m;k}\, v(y_{m;k}),
\end{equation}	
where
\begin{equation} \nonumber
\omega_{m;k}
:=  \int_{\RR} \ell_{m;k}(y) \, \rd \gamma(y).
\end{equation}
Notice that 
\begin{equation} \nonumber
Q_m(\varphi)
\  = \ 
\int_{\RR} \varphi(y) \, \rd \gamma(y)
\end{equation}	
for every polynomial $\varphi$ of degree $\le m$, due to the identity $I_m(\varphi) = \varphi$. 

For integration purpose, we additionally assume that the sequence $Y_m$ as in \eqref{interpolation points} is symmetric for every $m \in \NN_0$, i. e., 	$y_{m;m-k} = - y_{m;k}$ for $k=0,...,m$. The sequences $Y_m^*$ of the of the roots of the Hermite polynomials $H_{m+1}$ and their modifications ${\bar Y}_m^*$  are symmetric. Also, for the sequence $Y_m^*$, it is well-known that
\begin{equation} \nonumber
\omega_{m;k}
\ = \
\frac{1}{(m+1) H_m^2(y_{m;k}^*)}.
\end{equation}

For a given sequence $(Y_m)_{m=0}^\infty$, we define the univariate operator $\Delta^{{\rm Q}}_m$ for even $m \in \NN_0$ by
\begin{equation} \nonumber
\Delta^{{\rm Q}}_m
:= \
Q_m - Q_{m-2},
\end{equation} 
with the convention $Q_{-2} := 0$. 

 For  a function $v \in \Ll_2^\Ee(X)$, we introduce the operator $\Delta^{{\rm Q}}_\bs$ defined for $\bs \in \FF_\rev$ by
\begin{equation} \nonumber
\Delta^{{\rm Q}}_\bs(v)
:= \
\bigotimes_{j \in \NN} \Delta^{{\rm Q}}_{s_j}(v),
\end{equation} 
where the univariate operator
$\Delta^{{\rm Q}}_{s_j}$ is applied to the univariate function $v$ by considering $v$ as a 
function of  variable $y_i$ with the other variables held fixed. As $\Delta^{{\rm I}}_\bs$, the operators  $\Delta^{{\rm Q}}_\bs$ are well-defined for all $\bs \in \FF_\rev$. 
For a finite set $\Lambda \subset \FF_\rev$, we introduce the quadrature operator $Q_\Lambda$ which is generated by the interpolation operator $I^*_\Lambda$ as follows
\begin{equation} \nonumber
 Q_\Lambda v
:= \
\sum_{\bs \in \Lambda} \Delta^{{\rm Q}}_\bs (v)
\ = \
\int_{\RRi} I^*_\Lambda v (\by)\, \rd \gamma(\by).
\end{equation} 
Further, if $\phi \in X'$ is a bounded linear functional on $X$, denote by $\langle \phi, v \rangle$ the value of $\phi$ in $v$.  For a finite set $\Lambda \subset \FF_\rev$, the quadrature formula $Q_\Lambda v$ generates the quadrature formula $Q_\Lambda \langle \phi, v \rangle$ for integration of $\langle \phi, v \rangle$ by
\begin{equation} \nonumber
Q_\Lambda \langle \phi, v \rangle
:= \
\langle \phi, Q_\Lambda \rangle\,
\ = \
\int_{\RRi} \langle \phi, I^*_\Lambda v (\by) \rangle\, \rd \gamma(\by).
\end{equation} 

Let Assumption II hold for Hilbert spaces $X^1$ and $X^2$, and $v \in \Ll_2^\Ee(X^2)$. For a finite set $G \subset \NN_0 \times \FF_\rev$, we introduce the quadrature operator $\Qq_G$ which is generated by the interpolation operator $\Ii^*_G: \Ll_2^\Ee(X^2) \to \Vv(G)$, and  which is defined for $v$ by
\begin{equation}  \label{Qq=int}
\Qq_G v
:= \
\sum_{(k,\bs) \in G} \delta_k \Delta^{{\rm Q}}_\bs (v)
\ = \
\int_{\RRi} \Ii^*_G v (\by)\, \rd \gamma(\by).
\end{equation}
 Further, if $\phi \in (X^1)'$  is a bounded linear functional on $X^1$, for a finite set $G \subset \NN_0 \times \FF_\rev$, the quadrature formula $\Qq_G v$ generates the quadrature formula $\Qq_G \langle \phi, v \rangle$ for integration of $\langle \phi, v \rangle$ by
\begin{equation} \nonumber
\Qq_G \langle \phi, v \rangle
:= \
 \langle \phi, \Qq_G v  \rangle
 \ = \
\int_{\RRi} \langle \phi, \Ii^*_G v (\by) \rangle\, \rd \gamma(\by).
\end{equation}

For a function  $v \in \Ll_2^\Ee(X)$ and is represented by the series \eqref{series}, consider the function $v_\rev \in~\Ll_2^\Ee(X)$ defined by
\begin{equation} \nonumber
v_\rev := \ \sum_{\bs\in\FF_\rev} v_\bs H_\bs.
\end{equation}	
Notice that
\begin{equation} \label{int v = int v_ev}
\int_{\RRi} v(\by) \, \rd \gamma(\by)
\ = \
\int_{\RRi} v_\rev(\by) \, \rd \gamma(\by).
\end{equation}
Moreover,
if $Y_m$ is symmetric for every $m \in \NN_0$,
\begin{equation} \label{Delta^QH_s=0}
 \Delta^{{\rm Q}}_{\bs'} H_\bs(\by) 
\ = \
0, \quad \bs \notin \FF_\rev, \ \bs' \in \FF.
\end{equation}

\begin{theorem}\label{thm[quadrature]} Under the hypothesis of Theorem \ref{thm[coll-approx]}, assume additionally that the sequences $Y_m$, $m \in \NN_0$, are symmetric. For $\xi>1$, let $G_\rev(\xi)$ be the set defined as in \eqref{G_rev(xi)}. Then  we have the following.
	
	\begin{itemize}
		\item[{\rm (i)}]
	Then there exists a constant $C$ such that for  each $n \in \NN$, there exists a number $\xi_n$  such that  $\dim\Vv(G_\rev(\xi_n))\le n$ and 
\begin{equation} \label{u-Q_Gu}
\left\|\int_{\RRi}v(\by)\, \rd \gamma(\by ) - \Qq_{G_\rev(\xi_n)}v\right\|_{X^1} \leq C \times
	\begin{cases}
		n^{-\alpha} &\text{if } \alpha \leq  1/q_2 - 1/2,
		\\
		n^{-\beta} \log n &\text{if } \alpha >  1/q_2 - 1/2.
	\end{cases} 
	\end{equation}
		\item[{\rm (ii)}] 
Let  $\phi \in (X^1)'$ be a bounded linear functional on $X^1$. Then there exists a constant $C$ such that for  each $n \in \NN$, there exists a number $\xi_n$  such that  $\dim\Vv(G_\rev(\xi_n))\le n$ and 
\begin{equation} \label{u-Q_Gu_phi}
\left|\int_{\RRi} 
\langle \phi,  v (\by) \rangle\, \rd \gamma(\by ) - \Qq_{G_\rev(\xi_n)} \langle \phi,  v \rangle\right| 
\leq C \|\phi\|_{(X^1)'}\times
	\begin{cases}
		n^{-\alpha} &\text{if } \alpha \leq  1/q_2 - 1/2,
		\\
		n^{-\beta} \log n &\text{if } \alpha >  1/q_2 - 1/2.
	\end{cases} 
\end{equation}
\end{itemize}

	The rate $\alpha$ corresponds to the approximation of a single function in $X^2$ as given by \eqref{spatialappnX}. The rate $\beta$ is given by \eqref{[beta]1}.
\end{theorem}

\begin{proof}	
For a given $n \in \NN$, we approximate the integral in the right-hand side of \eqref{int v = int v_ev} by $\Qq_{G_\rev(\xi_n)}$ where $\xi_n$ is as in Theorem \ref{thm[coll-approx]ev}. By Lemmata~\ref{lemma[AbsConv]} and \ref{lemma[AbsConv]2} the series \eqref{HermiteSeriesV} and \eqref{[delta-series]} 
converge absolutely, and therefore, unconditionally  in  the Hilbert space $\Ll_2(X^1)$ to $v$. Hence, by  \eqref{Delta^QH_s=0} we derive that $\Qq_{G_\rev(\xi_n)}v = \Qq_{G_\rev(\xi_n)}v_\rev$.
Due to \eqref{Qq=int} and  \eqref{int v = int v_ev}  there holds the equality
	\begin{equation} \label{eq[int_RRi]}
\int_{\RRi} v(\by)\, \rd \gamma(\by ) - \Qq_{G_\rev(\xi_n)}v 
\ = \
\int_{\RRi} \left(v_\rev(\by) - \Ii^*_{G_\rev(\xi_n)}v_\rev (\by)\right) \, \rd \gamma(\by).
\end{equation}
Hence, applying \eqref{u-I_Gu, p le 2ev} in Theorem \ref{thm[coll-approx]ev} for $p=1$, we obtain (i):
	\begin{equation} \nonumber
		\begin{split}
\left\|\int_{\RRi} v(\by)\, \rd \gamma(\by ) - \Qq_{G_\rev(\xi_n)}v \right\|_{X^1}  
\ &\le \
\left\|v_\rev - \Ii^*_{G_\rev(\xi_n)}v_\rev\right\|_{\Ll_1(X^1)}
\\
\ &\le \
C \times
	\begin{cases}
		n^{-\alpha} &\text{if } \alpha \leq  1/q_2 - 1/2,
		\\
		n^{-\beta} \log n &\text{if } \alpha >  1/q_2 - 1/2
	\end{cases}.				
	\end{split}
\end{equation}

	For a given $n \in \NN$, we approximate the integral $\int_{\RRi} \langle \phi, v (\by) \rangle\, \, \rd \gamma(\by)$ by $\Qq_{\Lambda_\rev(\xi_n)} \langle \phi, v \rangle$ where $\xi_n$ is as in Corollary \ref{corollary[coll-approx]ev}.
Similarly to \eqref{eq[int_RRi]}, there holds the equality
\begin{equation} \nonumber
\int_{\RRi} \langle \phi, v_\rev (\by) \rangle\, \, \rd \gamma(\by ) - \Qq_{G_\rev(\xi_n)} \langle \phi, v_\rev(\by) \rangle
\ = \
\int_{\RRi} \langle \phi, v_\rev (\by)  - \Ii^*_{G_\rev(\xi_n)}  v_\rev(\by) \rangle \, \rd \gamma(\by).
\end{equation}
Hence, applying \eqref{u-I_Gu, p le 2ev} in Theorem \ref{thm[coll-approx]ev} for $p=1$, we prove (ii):
\begin{equation} \nonumber
\begin{split}
\left|\int_{\RRi} \langle \phi, v_\rev (\by) \rangle\, \, \rd \gamma(\by ) - \Qq_{G_\rev(\xi_n)}  \langle \phi, v \rangle \right|
\ &\le \
\int_{\RRi} \left|\langle \phi, v_\rev (\by)  - \Ii^*_{G_\rev(\xi_n)}   v_\rev(\by) \rangle \right| \, \rd \gamma(\by)
\\[1.5ex]
\ &\le \
\int_{\RRi} \|\phi\|_{(X^1)'} \|v_\rev (\by)  - \Ii^*_{G_\rev(\xi_n)}   v_\rev(\by)\|_{X^1} \, \rd \gamma(\by)
\\[1.5ex]
\ &\le \
C  \|\phi\|_{(X^1)'} \|v_\rev  - \Ii^*_{G_\rev(\xi_n)} v_\rev \|_{\Ll_1(X^1)} 
\\[1.5ex]
\ &\le \
C \|\phi\|_{(X^1)'} \times
	\begin{cases}
		n^{-\alpha} &\text{if } \alpha \leq  1/q_2 - 1/2,
		\\
		n^{-\beta} \log n &\text{if } \alpha >  1/q_2 - 1/2.
	\end{cases} 
\end{split}
\end{equation}
	\hfill
\end{proof}

Similarly to the proof of Theorem~\ref{thm[quadrature]}, applying \eqref{u-I_Lambdau, p le 2ev} in Corollary~\ref{corollary[coll-approx]ev} for $p=1$, we can derive the following
\begin{corollary}\label{corollary[quadrature]}
	Under the hypothesis of Corollary~\ref{corollary[coll-approx]}, assume additionally that the sequences $Y_m$, $m \in \NN_0$, are symmetric. For $\xi>1$, let $\Lambda_\rev(\xi)$ be the set defined as in \eqref{Lambda_rev(xi)}.Then we have the following.
	\begin{itemize}
	\item[{\rm (i)}]	
	Then there exists a constant $C$ such that for each $n \in \NN$, there exists a number $\xi_n$  such that  $|\Gamma(\Lambda_\rev(\xi_n))| \le n$ and 
\begin{equation} \label{u-Q_Lambdau}
\left\|\int_{\RRi}v(\by)\, \rd \gamma(\by ) - Q_{\Lambda_\rev(\xi_n)}v\right\|_X
\ \le \
Cn^{-(1/q - 1/2)}.
\end{equation}	
	\item[{\rm (ii)}] 
	Let  $\phi \in X'$ be a bounded linear functional on $X$.		Then there exists a constant $C$ such that for each $n \in \NN$, there exists a number $\xi_n$  such that  $|\Gamma(\Lambda_\rev(\xi_n))| \le n$ and 
	\begin{equation} \label{u-Q_Lambdau_phi}
	\left|\int_{\RRi} \langle \phi, v (\by) \rangle\, \, \rd \gamma(\by ) - Q_{\Lambda_\rev(\xi_n)} \langle \phi, v \rangle \right|
	\ \le \
	C\|\phi\|_{X'} n^{-(1/q - 1/2)}.
	\end{equation}
\end{itemize}	
\end{corollary}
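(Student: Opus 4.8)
The plan is to follow the proof of Theorem~\ref{thm[quadrature]} line for line in its structure, merely replacing the two-index interpolation estimate of Theorem~\ref{thm[coll-approx]ev} by its single-index counterpart \eqref{u-I_Lambdau, p le 2ev} from Corollary~\ref{corollary[coll-approx]ev}, taken at $p=1$. The decisive device is the parity reduction: since $\int_{\RR} w(y)\,\rd\gamma(y)=0$ for every odd $w$, integration against the Gaussian measure only sees the even Hermite modes, so the quadrature error reduces to the $\Ll_1(X)$-norm of a \emph{pure interpolation} error for the even part $v_\rev$, and Corollary~\ref{corollary[coll-approx]ev} is built precisely for this even setting on $\FF_\rev$. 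Note that the $\ell_q$-summability hypothesis inherited from Corollary~\ref{corollary[coll-approx]} over $\FF$ restricts at once to $\FF_\rev\subset\FF$, so the hypotheses of Corollary~\ref{corollary[coll-approx]ev} hold for $v_\rev$ with the same sequences.

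Concretely, for a given $n\in\NN$ I would take $\xi_n$ as supplied by Corollary~\ref{corollary[coll-approx]ev}, so that $|\Gamma(\Lambda_\rev(\xi_n))|\le n$ already holds. First I record the two identities that carry the argument: \eqref{int v = int v_ev}, giving $\int_{\RRi} v\,\rd\gamma(\by)=\int_{\RRi} v_\rev\,\rd\gamma(\by)$, and \eqref{Delta^QH_s=0}, valid because the $Y_m$ are symmetric, which annihilates every odd Hermite mode under the quadrature differences and hence yields $Q_{\Lambda_\rev(\xi_n)}v=Q_{\Lambda_\rev(\xi_n)}v_\rev$. Combining these with the defining identity $Q_\Lambda w=\int_{\RRi} I_\Lambda w\,\rd\gamma(\by)$ produces
\begin{equation} \nonumber
\int_{\RRi} v(\by)\,\rd\gamma(\by) - Q_{\Lambda_\rev(\xi_n)}v
\ = \
\int_{\RRi}\big(v_\rev(\by) - I_{\Lambda_\rev(\xi_n)}v_\rev(\by)\big)\,\rd\gamma(\by).
\end{equation}

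Next I pass to norms by the Bochner triangle inequality $\big\|\int_{\RRi} w\,\rd\gamma(\by)\big\|_X\le\int_{\RRi}\|w\|_X\,\rd\gamma(\by)=\|w\|_{\Ll_1(X)}$, and apply \eqref{u-I_Lambdau, p le 2ev} of Corollary~\ref{corollary[coll-approx]ev} with $p=1$ to the even function $v_\rev$, which is represented by a series over $\FF_\rev$ as required there. This gives the bound $Cn^{-(1/q-1/2)}$ and proves (i). For (ii) I would use linearity, $Q_{\Lambda_\rev(\xi_n)}\langle\phi,v\rangle=\langle\phi,Q_{\Lambda_\rev(\xi_n)}v\rangle$ and $\int_{\RRi}\langle\phi,v\rangle\,\rd\gamma(\by)=\big\langle\phi,\int_{\RRi} v\,\rd\gamma(\by)\big\rangle$, together with $|\langle\phi,w\rangle|\le\|\phi\|_{X'}\|w\|_X$, so that the scalar quadrature error is majorised by $\|\phi\|_{X'}$ times the same $\Ll_1(X)$-interpolation error, whence the rate $n^{-(1/q-1/2)}$ is inherited.

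The only genuinely delicate point is justifying the termwise manipulation of the Hermite expansion used to derive the two identities above, namely applying $\int_{\RRi}\,\cdot\,\rd\gamma(\by)$ and the finite combination $Q_{\Lambda_\rev(\xi_n)}$ term by term and discarding the odd modes. This is legitimate because, under the inherited $\ell_q$-summability, Lemmata~\ref{lemma[AbsConv]} and \ref{lemma[AbsConv]2} guarantee that the series \eqref{series} converges absolutely, hence unconditionally, in $\Ll_2(X)$ and a fortiori in $\Ll_1(X)$. Once this convergence is invoked, every remaining step is a routine application of Corollary~\ref{corollary[coll-approx]ev}; the practical gain over the generic quadrature setting is entirely encoded in the higher sparsity of $\Gamma(\Lambda_\rev)$ relative to $\Gamma(\Lambda)$, while the asymptotic exponent $1/q-1/2$ is simply carried over from the interpolation bound.
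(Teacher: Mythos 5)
Your proposal is correct and follows exactly the route the paper intends: the paper derives this corollary ``similarly to the proof of Theorem~\ref{thm[quadrature]}'', i.e.\ via the parity identities \eqref{int v = int v_ev} and \eqref{Delta^QH_s=0} to reduce the quadrature error to $\|v_\rev - I_{\Lambda_\rev(\xi_n)}v_\rev\|_{\Ll_1(X)}$, and then applies \eqref{u-I_Lambdau, p le 2ev} of Corollary~\ref{corollary[coll-approx]ev} with $p=1$, with part (ii) obtained by the same functional bound $|\langle\phi,w\rangle|\le\|\phi\|_{X'}\|w\|_X$. Your additional remarks on restricting the summability hypotheses to $\FF_\rev$ and on the absolute convergence justifying the termwise manipulations are consistent with what the paper does in Theorem~\ref{thm[quadrature]}.
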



We make use the following notation: for $\nu \in \NN$,
\begin{equation} \nonumber
	\FF_\nu := \{\bs \in \FF: s_j \in \NN_{0,\nu},  \ j \in \NN \};
	\quad
	\NN_{0,\nu} := \{n \in \NN_0: n = 0, \nu,\nu+1, ...\}.
\end{equation}
The set $\FF_\nu$ has been introduced in \cite{ZS17}.  The set   $\FF_2$ plays an important role in establishing improved  convergence rates for sparse-grid Smolyak quadrature in \cite{ZDS18,ZS17}.

There is another way to construct quadrature operators which perform the same convergence rates as in \eqref{u-Q_Gu}, \eqref{u-Q_Gu_phi} and \eqref{u-Q_Lambdau}, \eqref{u-Q_Lambdau_phi}. We briefly consider it. 
For a given sequence $(Y_m)_{m=0}^\infty$, we define the univariate operator $\Delta^{{\rm Q}}_m$ for $m \in \NN_{0,2}$ by
\begin{equation} \nonumber
	\tilde{\Delta}^{{\rm Q}}_m
	:=
	Q_m - Q_{m-1},  \ \ \text{if} \ m\not=2, \ \ \ \text{and } \ \ \Delta^{{\rm Q}}_2:= Q_2 - Q_0,
\end{equation} 
with the convention $Q_{-1} := 0$. 

Define for $\xi>1$,
\begin{equation} \label{G_2(xi)}
	G_2(\xi)
	:= \ 
	\begin{cases}
		\big\{(k,\bs) \in \NN_0 \times\FF_2: \, 2^k \sigma_{2;\bs}^{q_2} \leq \xi\big\} \quad &{\rm if }  \ \alpha \le 1/q_2 - 1/2;\\
		\big\{(k,\bs) \in \NN_0 \times\FF_2: \, \sigma_{1;\bs}^{q_1} \le \xi , \  
		2^{\tau  k} \sigma_{2;\bs}\leq \xi^\vartheta \big\} \quad  & {\rm if }  \ \alpha > 1/q_2 - 1/2,
	\end{cases}
\end{equation}
where $\tau$ and  $\vartheta$ are as in \eqref{kappa}.	

The operators $\tilde{\Delta}^{{\rm Q}}_{2\bs}$ for $\bs \in \FF_2$, $\tilde{Q}_\Lambda$ for a finite set $\Lambda \subset \FF_2$, and $\tilde{\Qq}_G$ for a  finite set $G \subset \NN_0 \times \FF_2$, are defined in similar way  as the operators $\Delta^{{\rm Q}}_{2\bs}$, $Q_\Lambda$ and $\Qq_G$, respectively,  by  replacing $\Delta^{{\rm Q}}_{s_j}$ with $\tilde{\Delta}^{{\rm Q}}_{s_j}$, $j \in \NN$.

\begin{theorem}\label{thm[quadrature2]} 
	Under the hypothesis of Theorem \ref{thm[coll-approx]}, assume additionally that the sequences $Y_m$, $m \in \NN_0$, are symmetric. 
	
	Then  we have the following.
	
	\begin{itemize}
		\item[{\rm (i)}]
		Then there exists a constant $C$ such that  each $n \in \NN$ there exists a number $\xi_n$  such that  $\dim\Vv(G_2(\xi_n))\le n$ and 
		\begin{equation} \label{u-Q_Gu2}
			\left\|\int_{\RRi}v(\by)\, \rd \gamma(\by ) - \tilde{\Qq}_{G_2(\xi_n)}v\right\|_{X^1} \leq C \times
	\begin{cases}
		n^{-\alpha} &\text{if } \alpha \leq  1/q_2 - 1/2,
		\\
		n^{-\beta} \log n &\text{if } \alpha >  1/q_2 - 1/2.
	\end{cases} 
		\end{equation}
		\item[{\rm (ii)}] 
		Let  $\phi \in (X^1)'$ be a bounded linear functional on $X^1$. 	Then there exists a constant $C$ 
		for each $n \in \NN$ there exists a number $\xi_n$  such that  $\dim\Vv(G_2(\xi_n))\le n$ and 
		\begin{equation} \label{u-Q_Gu_phi2}
			\left|\int_{\RRi} \langle \phi,  v (\by) \rangle\, \rd \gamma(\by ) - \tilde{\Qq}_{G_2(\xi_n)} \langle \phi,  v \rangle\right| \leq C \|\phi\|_{(X^1)'}  \times 
	\begin{cases}
		n^{-\alpha} &\text{if } \alpha \leq  1/q_2 - 1/2,
		\\
		n^{-\beta} \log n &\text{if } \alpha >  1/q_2 - 1/2.
	\end{cases} 
		\end{equation}
	\end{itemize}

	The rate $\alpha$ corresponds to the approximation of a single function in $X^2$ as given by \eqref{spatialappnX}. The rate $\beta$ is given by \eqref{[beta]1}.
\end{theorem}

\begin{corollary}\label{corollary[quadrature2]}
	Under the hypothesis of Corollary~\ref{corollary[coll-approx]}, assume additionally that the sequences $Y_m$, $m \in \NN_0$, are symmetric. 
	For $\xi>1$, define
	\begin{equation} \label{Lambda_2(xi)}
		\Lambda_2(\xi):= \{\bs \in \FF_2: \, \sigma_\bs^q \le \xi\}.
	\end{equation}	
	Then we have the following.
	\begin{itemize}
		\item[{\rm (i)}]	
		For each $n \in \NN$ there exists a number $\xi_n$  such that  $|\Gamma(\Lambda(\xi_n))| \le n$ and 
		\begin{equation} \label{u-Q_Lambdau2}
			\left\|\int_{\RRi}v(\by)\, \rd \gamma(\by ) - \tilde{Q}_{\Lambda_2(\xi_n)}v\right\|_X
			\ \le \
			Cn^{-(1/q - 1/2)}.
		\end{equation}	
		\item[{\rm (ii)}] 
		Let  $\phi \in X'$ be a bounded linear functional on $X$.		Then there exists a constant $C$ such that for each $n \in \NN$ there exists a number $\xi_n$  such that  $|\Gamma(\Lambda_2(\xi_n))| \le n$ and 
		\begin{equation} \label{u-Q_Lambdau_phi2}
			\left|\int_{\RRi} \langle \phi, v (\by) \rangle\, \, \rd \gamma(\by ) - \tilde{Q}_{\Lambda_2(\xi_n)} \langle \phi, v \rangle \right|
			\ \le \
			C\|\phi\|_{X'} n^{-(1/q - 1/2)}.
		\end{equation}
	\end{itemize}	
\end{corollary}

Theorem \ref{thm[quadrature2]} and Corollary \ref{corollary[quadrature2]} can be proven in a similar manner as Theorem \ref{thm[quadrature]} and Corollary \ref{corollary[quadrature]} with a modification.

%

\section{Elliptic PDEs with lognormal inputs}
\label {lognormal inputs}
In this section, we apply the results  in Sections~\ref{Linear Galerkin approximation}--\ref{Integration} to Hermite GPC expansion and polynomial interpolation  approximations as well as integration for  the parameterized diffusion elliptic equation \eqref{p-ellip} with lognormal inputs  \eqref{lognormal}.

We approximate the solution $u(\by)$ to  this equation by truncation of the Hermite series
\begin{equation} \nonumber
u(\by)=\sum_{\bs\in\FF} u_\bs \,H_\bs(\by), \quad u_\bs \in V.
\end{equation}

For convenience, we introduce the conventions: $W^1:= V$; $W^2:=W$; $H^0(D):= L_2(D)$; $W^{0,\infty}(D):= L_\infty(D)$. Constructions of fully discrete approximations and integration are based on the approximation property \eqref{spatialappn} in Assumption I and the weighted $\ell_2$-summability of  the series $(\|u_\bs\|_{W^r})_{\bs \in \FF}$, $r=1,2$  in  the following lemma which has been proven in \cite{BCDM17} for $r = 1$ and in  \cite{BCDS17} for $r=2$.

\begin{lemma}\label {lemma[ell_2summability]}
	Let $r=1,2$. 	Assume that the right side $f$ in \eqref{ellip}
	belongs to $H^{r-2}(D)$, that the domain $D$ has $C^{r-2,1}$ smoothness,
	that all functions $\psi_j$ belong to $W^{r-1,\infty}(D)$.
	Assume that  there exist a number $0<q_r<\infty$ and a sequence $\brho_r=(\rho_{r;j}) _{j \in \NN}$ of positive numbers such that $(\rho_{r;j}^{-1}) _{j \in \NN}\in \ell_{q_r}(\NN)$ and
	\begin{equation} \nonumber
\sup_{|\alpha|\leq r-1} 
	\left\| \sum _{j \in \NN} \rho_{r;j} |D^\alpha \psi_j| \right\|_{L^\infty(D)} 
	<\infty.
	\end{equation}
	
	Then we have that for any $\eta \in \NN$,
		\begin{equation} \label{sigma_r,s}
		\sum_{\bs\in\FF} (\sigma_{r;\bs} \|u_\bs\|_{W^r})^2 <\infty \quad \text{with} \quad 
		\sigma_{r;\bs}^2:=\sum_{\|\bs'\|_{\ell_\infty(\FF)}\leq \eta}{\bs\choose \bs'} \brho_r^{2\bs'}.
		\end{equation}
\end{lemma}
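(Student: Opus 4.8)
The plan is to reproduce the arguments of \cite{BCDM17} (for $r=1$) and \cite{BCDS17} (for $r=2$). Their common engine is an exact identity that converts the weighted Hermite sum in \eqref{sigma_r,s} into a weighted sum of parametric-derivative integrals. Writing $\partial^{\bs'} := \prod_j \partial_{y_j}^{s'_j}$ for $\bs' \in \FF$ and using the differentiation rule $H_m' = \sqrt{m}\, H_{m-1}$ for the normalized univariate Hermite polynomials, one has $\partial^{\bs'} H_\bs = \sqrt{\bs!/(\bs-\bs')!}\; H_{\bs - \bs'}$ when $\bs' \le \bs$ and $\partial^{\bs'} H_\bs = 0$ otherwise, where $\bs! := \prod_j s_j!$. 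Applying this termwise to $u = \sum_\bs u_\bs H_\bs$ and invoking Parseval's identity yields, for every $\bs' \in \FF$,
\begin{equation} \nonumber
\int_{\RRi} \|\partial^{\bs'} u(\by)\|_{W^r}^2 \, \rd \gamma(\by)
\ = \ \bs'! \sum_{\bs \ge \bs'} \binom{\bs}{\bs'} \|u_\bs\|_{W^r}^2 .
\end{equation}
Multiplying by $\brho_r^{2\bs'}/\bs'!$, summing over all $\bs'$ with $\|\bs'\|_{\ell_\infty(\FF)} \le \eta$, and interchanging the order of summation reproduces exactly the weight \eqref{sigma_r,s}:
\begin{equation} \nonumber
\sum_{\bs \in \FF} \sigma_{r;\bs}^2 \|u_\bs\|_{W^r}^2
\ = \ \sum_{\|\bs'\|_{\ell_\infty(\FF)} \le \eta} \frac{\brho_r^{2\bs'}}{\bs'!}
\int_{\RRi} \|\partial^{\bs'} u(\by)\|_{W^r}^2 \, \rd \gamma(\by).
\end{equation}
Thus the assertion reduces to showing that the right-hand side is finite.

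Next I would bound the parametric derivatives. Differentiating the variational form of \eqref{ellip} and using $\partial_{y_j} a(\by) = \psi_j\, a(\by)$ (since $a(\by) = \exp(\sum_j y_j \psi_j)$), one derives by induction a Leibniz-type recursion for $\partial^{\bs'} u(\by)$, whose solution is controlled in the $W^r$-norm by negative powers of the realization-dependent coercivity constant $a_{\min}(\by)$ (the essential infimum over $\bx \in D$ of $a(\by)(\bx)$) multiplied by products of the $\psi_j$ and, for $r=2$, of their first-order spatial derivatives. The hypotheses enter here in two ways: the regularity assumptions on $f$, on $\partial D$, and the smoothness $\psi_j \in W^{r-1,\infty}(D)$ ensure $u(\by) \in W^r$ with a quantitative bound on $\|\partial^{\bs'} u(\by)\|_{W^r}$, while the condition $\sup_{|\alpha| \le r-1}\big\|\sum_j \rho_{r;j}\,|D^\alpha \psi_j|\big\|_{L_\infty(D)} < \infty$ is exactly what forces the sum over $\bs'$ of these products, weighted by $\brho_r^{2\bs'}/\bs'!$, to converge to a finite, $\by$-dependent bound.

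The main obstacle will be the Gaussian integrability of these derivative bounds: because $a(\by)$ is not uniformly bounded below on $\RRi$, the factor $a_{\min}(\by)^{-1}$ can be arbitrarily large, so no pointwise estimate suffices. Here I would invoke Fernique's theorem, which gives finite moments of every order for $a_{\min}(\by)^{-1}$ with respect to $\gamma$, so that each integral $\int_{\RRi}\|\partial^{\bs'}u(\by)\|_{W^r}^2\,\rd\gamma(\by)$ is finite and the resulting constants remain summable against $\brho_r^{2\bs'}/\bs'!$. A second delicate point is the truncation $\|\bs'\|_{\ell_\infty(\FF)} \le \eta$: it is precisely this cap on each component that averts a factorial blow-up in the $\bs'$-summation and keeps the achievable derivative estimates compatible with the $\brho_r^{2\bs'}/\bs'!$ weights. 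One must also justify the termwise differentiation and the interchange of summations above; this follows once finiteness is established, e.g.\ by first proving the estimate for finite sections of $\FF$ and passing to the limit by monotone convergence. Finally, the $\ell_{q_r}$-summability of $(\rho_{r;j}^{-1})_{j \in \NN}$ is what supplies the companion property $(\sigma_{r;\bs}^{-1})_{\bs \in \FF} \in \ell_{q_r}(\FF)$ in the form in which this lemma is applied in Sections~\ref{Linear Galerkin approximation}--\ref{Integration}.
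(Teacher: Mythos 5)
The paper does not actually prove this lemma: it is imported verbatim, with the citation to [BCDM17] for $r=1$ and [BCDS17] for $r=2$, so your attempt has to be measured against those proofs. Your opening reduction is exactly theirs and is correct: $\partial^{\bs'}H_\bs=\sqrt{\bs!/(\bs-\bs')!}\,H_{\bs-\bs'}$, Parseval, and the interchange of sums convert \eqref{sigma_r,s} into the finiteness of $\sum_{\|\bs'\|_{\ell_\infty(\FF)}\le\eta}\bigl(\brho_r^{2\bs'}/\bs'!\bigr)\int_{\RRi}\|\partial^{\bs'}u(\by)\|_{W^r}^2\,\rd\gamma(\by)$, and the Fernique-type integrability of $a_{\min}(\by)^{-1}$ is indeed what handles the base case $\bs'=\bzero$.

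The genuine gap is in how you propose to bound that weighted derivative sum. Estimating each $\|\partial^{\bs'}u(\by)\|_{W^r}$ separately through a Leibniz recursion, by powers of $a_{\min}(\by)^{-1}$ times products of the $\psi_j$, and only then summing against $\brho_r^{2\bs'}/\bs'!$, is precisely the older strategy (Hoang--Schwab type): the recursion produces factors growing factorially in $|\bs'|$, and the resulting summability requires the stronger hypothesis $(j\|\psi_j\|)_{j}\in\ell_p$ rather than the support-aware condition $\bigl\|\sum_j\rho_{r;j}|D^\alpha\psi_j|\bigr\|_{L^\infty}<\infty$ assumed here --- so this step, taken literally, does not deliver the lemma. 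The mechanism of [BCDM17] deliberately avoids individual derivative bounds: one tests the $\bs'$-differentiated variational identity against $\partial^{\bs'}u(\by)$ and proves, by induction on $\eta$, an estimate for the \emph{entire} weighted sum $\sum_{\|\bs'\|_{\ell_\infty(\FF)}\le\eta}\bigl(\brho^{2\bs'}/\bs'!\bigr)\int_D a(\by)|\nabla\partial^{\bs'}u(\by)|^2\,\rd\bx$ in terms of the same quantity at level $\eta-1$, and the induction closes only under a quantitative smallness condition $\bigl\|\sum_j\rho_j|\psi_j|\bigr\|_{L^\infty}<C_\eta$ (with $C_\eta$ decreasing in $\eta$), which your account omits and which is implicitly absorbed into the lemma's ``for any $\eta$'' formulation by rescaling $\brho_r$. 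For $r=2$ one additionally needs the elliptic regularity shift applied to the differentiated equation, using $f\in L^2(D)$, the $C^{0,1}$ domain, and $\psi_j\in W^{1,\infty}(D)$; this is where [BCDS17] extends [BCDM17] and it deserves more than the passing mention you give it.
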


We need  two auxiliary lemmata.	
	\begin{lemma} \label{measurable}
		Let the assumptions of Lemma~\ref {lemma[ell_2summability]} hold  for the space $W^1$ with $0 <q_1 <2$.  Then the solution map 
		$\by \mapsto u(\by)$ is $\gamma$-measurable and $u \in \Ll_2(W^1)$.	 Moreover, $u \in \Ll_2^\Ee(W^1)$ where
		\begin{equation} \nonumber
		\Ee
		:= \
		\left\{\by \in \RRi: \  \sup_{j \in \NN} \rho_{1;j}^{-1} |y_j| < \infty \right\}
		\end{equation}
		 having $\gamma(\Ee) =1$ and  containing  all $\by \in \RRi$ with $|\by|_0 < \infty$.
	\end{lemma}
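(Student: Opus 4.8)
The plan is to establish the three assertions in turn: the full-measure and support property of $\Ee$, the pointwise definition together with $\gamma$-measurability of $\by\mapsto u(\by)$ in $V=W^1$, and finally $u\in\Ll_2(W^1)$ (which with the pointwise definition yields $u\in\Ll_2^\Ee(W^1)$). The inclusion $\{\by:|\by|_0<\infty\}\subset\Ee$ is immediate, since then $\rho_{1;j}^{-1}|y_j|=0$ for all but finitely many $j$. For $\gamma(\Ee)=1$ I would apply the Borel--Cantelli lemma to the events $\{|y_j|>\rho_{1;j}\}$: the Gaussian tail bound gives $\gamma(\{|y_j|>\rho_{1;j}\})\le 2\exp(-\rho_{1;j}^2/2)$, and since $\rho_{1;j}\to\infty$ (as $(\rho_{1;j}^{-1})\in\ell_{q_1}$) the comparison $\exp(-t^2/2)=o(t^{-q_1})$ yields $\sum_j\gamma(\{|y_j|>\rho_{1;j}\})\le C\sum_j\rho_{1;j}^{-q_1}<\infty$; hence almost surely $\rho_{1;j}^{-1}|y_j|\le 1$ for all large $j$, so $\sup_j\rho_{1;j}^{-1}|y_j|<\infty$.

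For fixed $\by\in\Ee$ set $T(\by):=\sup_j\rho_{1;j}^{-1}|y_j|<\infty$ and $g:=\sum_j\rho_{1;j}|\psi_j|$, so that $S:=\|g\|_{L_\infty(D)}<\infty$ by hypothesis. Then $\sum_j|y_j||\psi_j(\bx)|\le T(\by)\,g(\bx)\le S\,T(\by)$ for a.e.\ $\bx$, so $b(\by)=\sum_j y_j\psi_j$ converges absolutely a.e.\ to an element of $L_\infty(D)$ with $\|b(\by)\|_{L_\infty(D)}\le S\,T(\by)$; thus $a(\by)=\exp(b(\by))$ is bounded above and bounded away from zero, and the Lax--Milgram lemma (with $f\in H^{-1}(D)=V'$) gives a unique $u(\by)\in V$ satisfying $\|u(\by)\|_V\le\exp(\|b(\by)\|_{L_\infty(D)})\,\|f\|_{V'}$. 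For measurability I would truncate: the sections $u_N(\by)$ solving the problem with $a_N=\exp(\sum_{j\le N}y_j\psi_j)$ depend continuously, hence measurably, on $(y_1,\dots,y_N)$; for $\by\in\Ee$ one has $a_N(\by)\to a(\by)$ a.e.\ under the uniform bounds $e^{-S T(\by)}\le a_N(\by)\le e^{S T(\by)}$, so stability of the solution operator gives $u_N(\by)\to u(\by)$ in $V$, and a pointwise a.e.\ limit of measurable maps is measurable (cf.\ \cite{BCDM17}).

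It remains to prove $u\in\Ll_2(W^1)=\Ll_2(V)$, for which the pointwise bound reduces matters to $\int_{\RRi}\exp(2\|b(\by)\|_{L_\infty(D)})\,\rd\gamma(\by)<\infty$. I would realize $\|b(\by)\|_{L_\infty(D)}$ as the supremum of the centered Gaussian field $\bx\mapsto b(\by)(\bx)$ and apply Gaussian concentration (the Borell--TIS inequality, or Fernique's theorem). Its weak variance is controlled by $\sup_{\bx\in D}\sum_j\psi_j(\bx)^2\le S^2\sum_j\rho_{1;j}^{-2}<\infty$, where the bound $|\psi_j(\bx)|\le\rho_{1;j}^{-1}S$ follows from $\rho_{1;j}|\psi_j|\le g$ and the finiteness uses exactly $q_1<2$, i.e.\ $(\rho_{1;j}^{-1})\in\ell_2$. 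The resulting Gaussian tail for $\|b\|_{L_\infty(D)}$ yields finite exponential moments of all orders, in particular the desired integral, so $u\in\Ll_2(V)$; combined with the previous step this gives $u\in\Ll_2^\Ee(W^1)$.

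The main obstacle is this integrability step. The naive estimate $\|b(\by)\|_{L_\infty(D)}\le S\,T(\by)$ is too lossy, since $\int_{\RRi}\exp(2S\,T(\by))\,\rd\gamma$ can diverge when $1<q_1<2$; one must instead exploit the concentration of the $L_\infty(D)$-norm of the Gaussian field, whose finite weak variance is precisely where the hypothesis $q_1<2$ enters. The accompanying measure-theoretic care (the non-separability of $L_\infty(D)$, and writing $\|b\|_{L_\infty(D)}$ as a measurable supremum over a countable set) is handled as in \cite{BCDM17}.
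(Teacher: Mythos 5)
Your proposal reaches all three conclusions correctly, but it is worth noting that the paper's own proof is pure citation: it checks that Assumption A of \cite{BCDM17} holds (via \cite[Remark~2.5]{BCDM17}, using only that $(\rho_{1;j}^{-1})_{j\in\NN}\in\ell_{q_1}(\NN)$) and then quotes \cite[Corollary~2.3]{BCDM17} for measurability and $u\in\Ll_2(W^1)$, and \cite[(2.23), (2.26)]{BCDM17} for $\gamma(\Ee)=1$ and pointwise solvability on $\Ee$. Your Borel--Cantelli argument, the pointwise Lax--Milgram bound $\|u(\by)\|_V\le \exp(\|b(\by)\|_{L_\infty(D)})\|f\|_{V'}$ on $\Ee$, and the truncation argument for measurability are essentially self-contained reconstructions of what lies behind those citations, and they are sound. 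The genuinely different step is the integrability: you invoke Gaussian concentration (Borell--TIS/Fernique) for $\|b(\by)\|_{L_\infty(D)}$, which does work here once the essential supremum is written as a countable supremum (e.g.\ over a countable dense subset of the unit sphere of $L_1(D)$, for which your weak-variance bound $\sup_{\bx}\sum_j\psi_j(\bx)^2\le S^2\sum_j\rho_{1;j}^{-2}$ survives by Jensen's inequality). Be aware, though, that this route genuinely consumes the hypothesis $q_1<2$ through $\sum_j\rho_{1;j}^{-2}<\infty$, whereas the argument behind \cite{BCDM17} needs only $(\rho_{1;j}^{-1})\in\ell_q(\NN)$ for some $q<\infty$; so your proof is valid under the stated hypotheses but strictly less general.

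The real flaw is that your stated ``main obstacle'' is not an obstacle: the claim that $\int_{\RRi}\exp(2S\,T(\by))\,\rd\gamma(\by)$ can diverge when $1<q_1<2$ is false, and consequently the detour through Borell--TIS is unnecessary. The variable $T(\by)=\sup_{j}\rho_{1;j}^{-1}|y_j|$ is sub-Gaussian under the standing hypotheses. Indeed, $c:=\inf_j\rho_{1;j}>0$ (the $\rho_{1;j}$ are positive and tend to infinity since $(\rho_{1;j}^{-1})\in\ell_{q_1}(\NN)$), and for $t\ge\max(2,1/c)$ the union bound together with the standard tail estimate $\gamma(|y|>s)\le e^{-s^2/2}$, valid for $s\ge 1$, gives
\begin{equation} \nonumber
\gamma(T>t)
\ \le \
\sum_{j\in\NN} e^{-t^2\rho_{1;j}^2/2}
\ = \
\sum_{j\in\NN} e^{-t^2\rho_{1;j}^2/4}\, e^{-t^2\rho_{1;j}^2/4}
\ \le \
e^{-c^2t^2/4}\sum_{j\in\NN} e^{-\rho_{1;j}^2},
\end{equation}
where $\sum_{j} e^{-\rho_{1;j}^2}\le C\sum_{j}\rho_{1;j}^{-q_1}<\infty$ because $x^{q_1}e^{-x^2}$ is bounded. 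Hence $\gamma(T>t)$ decays like a Gaussian in $t$, so $\int_{\RRi}\exp(\lambda T(\by))\,\rd\gamma(\by)<\infty$ for every $\lambda>0$ and every $q_1<\infty$; the ``naive'' estimate $\|u(\by)\|_V\le e^{S\,T(\by)}\|f\|_{V'}$ therefore already yields $u\in\Ll_2(W^1)$, and this elementary computation is essentially how the results you would be replacing are actually proven in \cite{BCDM17}.
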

	
	\begin{proof}
		The proof of this lemma already is in \cite{BCDM17}. Indeed, under the assumptions of Lemma~\ref {lemma[ell_2summability]}  for the space $W^1$ with $0 <q_1 <2$, by \cite[Remark~2.5]{BCDM17}  Assumption {\bf A} in \cite[Page~349]{BCDM17} holds for the sequence $\brho_1=(\rho_{1;j}) _{j \in \NN}$. Hence, by  
		\cite[Corollary 2.3]{BCDM17}  the solution map $\by \mapsto u(\by)$ is $\gamma$-measurable and $u \in \Ll_2(W^1)$.  Moreover, $\gamma(\Ee) =1$ \cite[(2.23)]{BCDM17} and, obviously, $\Ee$ contains  all $\by \in \RRi$ with $|\by|_0 < \infty$.
		For a point  $\by \in \RRi$, by the Lax-Milgram lemma the solution $u(\by)$ is well-defined if $b(\by) \in L_\infty(D)$. This inclusion  holds true if $\by \in \Ee$ \cite[(2.26)]{BCDM17}. This means that $u \in \Ll_2^\Ee(W^1)$. 
		\hfill
	\end{proof}

The following lemma is a generalization of \cite[Lemma 5.1]{BCDM17}.
\begin{lemma} \label{lemma[bcdm]}
	Let $0 < q <\infty$, $\eta \in \NN$,	$\brho=(\rho_j) _{j \in \NN}$ of  positive numbers such the sequence $(\rho_j^{-1}) _{j \in \NN}$ belongs to $\ell_q(\NN)$. Let $\theta, \lambda$ be arbitrary positive numbers and  $(p_\bs(\theta,\lambda))_{\bs \in \FF}$ the sequence given in \eqref{[p_s]}. Let for numbers $\eta \in \NN$  the sequence $(\sigma_\bs)_{\bs \in \FF}$ be defined by
	\begin{equation} \nonumber
	\sigma_\bs^2  :=   \sum_{\|\bs'\|_{\ell_\infty(\FF)} \le \eta}\binom{\bs}{\bs'}\rho^{2\bs'}.
	\end{equation}	
	Then for any  $\eta > \frac{2\nu (\theta + 1)}{q}$, we have
	\begin{equation} \nonumber
	\sum_{\bs \in \FF_\nu} p_\bs(\theta,\lambda) \sigma_\bs^{-q/\nu} < \infty.
	\end{equation}
\end{lemma}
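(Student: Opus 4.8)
The plan is to reduce the multivariate sum to a product of univariate sums by exploiting the full multiplicativity of both $p_\bs(\theta,\lambda)$ and $\sigma_\bs$ over the coordinates $j$, and then to balance the decay in $s$ against the decay in $j$ by retaining a single, carefully chosen term in the definition of $\sigma_\bs$. First I would note that the constraint $\|\bs'\|_{\ell_\infty(\FF)}\le\eta$ decouples across coordinates, so that $\sigma_\bs=\prod_{j\in\NN}\sigma^{(j)}_{s_j}$ with $\big(\sigma^{(j)}_s\big)^2:=\sum_{l=0}^{\min(s,\eta)}\binom{s}{l}\rho_j^{2l}$, while $p_\bs(\theta,\lambda)=\prod_{j\in\NN}(1+\lambda s_j)^\theta$ by \eqref{[p_s]}. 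Since $\sigma^{(j)}_0=1$ and $(1+\lambda\cdot 0)^\theta=1$, every coordinate factor equals $1$ when $s_j=0$, so that the sum over the finitely supported sequences in $\FF_\nu$ factorizes and, because all terms are nonnegative,
\begin{equation} \nonumber
\sum_{\bs\in\FF_\nu} p_\bs(\theta,\lambda)\,\sigma_\bs^{-q/\nu}
\ = \ \prod_{j\in\NN}\big(1+b_j\big), \qquad
b_j := \sum_{s\in\NN_{0,\nu},\ s\ge\nu} (1+\lambda s)^\theta \big(\sigma^{(j)}_s\big)^{-q/\nu}.
\end{equation}
As $\prod_{j}(1+b_j)\le\exp\big(\sum_{j} b_j\big)$, it suffices to prove $\sum_{j\in\NN} b_j<\infty$.

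The crux is a lower bound for $\sigma^{(j)}_s$ obtained by keeping only the single term $l=\eta$, which is legitimate once $s\ge\eta$. Using $\binom{s}{\eta}\ge(s/\eta)^\eta$ for $s\ge\eta$, I obtain $\sigma^{(j)}_s\ge(s/\eta)^{\eta/2}\rho_j^{\eta}$, and for the finitely many indices with $\nu\le s<\eta$ (if any) the top term $l=s\ge\nu$ already gives $\sigma^{(j)}_s\ge\rho_j^{s}\ge\rho_j^{\nu}$ for $j$ large, which is harmless. Together with $(1+\lambda s)^\theta\le(1+\lambda)^\theta s^\theta$ for $s\ge1$, substitution yields
\begin{equation} \nonumber
b_j \ \le \ C\,\rho_j^{-\eta q/\nu}\sum_{s\ge\nu} s^{\,\theta-\eta q/(2\nu)},
\end{equation}
and the univariate series on the right converges because the hypothesis $\eta>\frac{2\nu(\theta+1)}{q}$ is exactly equivalent to $\theta-\eta q/(2\nu)<-1$. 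Hence $b_j\le C\,\rho_j^{-\eta q/\nu}$ with $C$ independent of $j$.

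It remains to sum over $j$. Since $\eta\ge\nu$ forces $\eta q/\nu\ge q$, the embedding $\ell_q(\NN)\hookrightarrow\ell_{\eta q/\nu}(\NN)$ (valid because $(\rho_j^{-1})_{j\in\NN}\to0$) together with the assumption $(\rho_j^{-1})_{j\in\NN}\in\ell_q(\NN)$ gives $\sum_{j\in\NN}\rho_j^{-\eta q/\nu}<\infty$. Therefore $\sum_{j} b_j<\infty$, the product converges, and the claim follows. The main obstacle is the double role played by the exponent in the single-term lower bound: the same $\eta$ must be large enough that the $s$-series converges (which is precisely the stated threshold) yet must keep the coordinatewise factor $\rho_j^{-\eta q/\nu}$ summable in $j$ (which needs $\eta q/\nu\ge q$); retaining the top term $l=\eta$, rather than a smaller power of $\rho_j$, is what reconciles these two competing requirements.
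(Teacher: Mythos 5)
Your proposal is correct and follows essentially the same route as the paper: factorize $p_\bs\sigma_\bs^{-q/\nu}$ over the coordinates, bound each univariate factor from below by the single term $l=\min(s,\eta)$ of $\big(\sigma^{(j)}_s\big)^2$, use $\binom{s}{\eta}\ge (s/\eta)^\eta$ together with $\eta>2\nu(\theta+1)/q$ to make the series in $s$ converge, and then use $\rho_j>1$ for large $j$ and the $\ell_q$-summability of $(\rho_j^{-1})$ to control the product over $j$. The only blemish is the displayed inequality $b_j\le C\rho_j^{-\eta q/\nu}\sum_{s\ge\nu}s^{\theta-\eta q/(2\nu)}$: the finitely many terms with $\nu\le s<\eta$ contribute $C\rho_j^{-sq/\nu}$, which for $\rho_j>1$ is larger than $\rho_j^{-\eta q/\nu}$, so the correct uniform bound is $b_j\le C\rho_j^{-q}$ (exactly as in the paper's $B_j\le 1+C\rho_j^{-q}$); since you already identified the lower bound $\sigma^{(j)}_s\ge\rho_j^{\nu}$ for those terms and $\sum_j\rho_j^{-q}<\infty$ is the hypothesis, this is a one-line fix rather than a gap.
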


\begin{proof} 
	With $\theta' := 2\theta \nu/ q$, we have that
	\begin{equation} \nonumber
	\begin{split}
	\sum_{\bs \in \FF_\nu} p_\bs(\theta,\lambda) \sigma_\bs^{-q/\nu}  &=
	\sum_{\bs \in \FF_\nu}	\prod_{j \in \NN}\ \left( \sum_{k=0}^\eta \binom{s_j}{k} \ (1 + \lambda s_j )^{\theta'}\rho_j^{2k}\right)^{-q/2\nu} \\
	&= 
	\prod_{j \in \NN}\ \sum_{n \in \NN_{0,\nu}} \left(\sum_{k=0}^\eta \binom{n}{k} \ (1 + \lambda n )^{\theta'}\rho_j^{2k}\right)^{-q/2\nu}=: \prod_{j \in \NN} B_j,
	\end{split}
	\end{equation}
	and	
	\begin{equation} \nonumber
	\begin{split}
	B_j 
	&\le  
	\sum_{n \in \NN_{0,\nu}} \left( \binom{n}{\min(n,\eta)} \ 
	(1 + \lambda n )^{\theta'}\rho_j^{2\min(n,\eta)}\right)^{-q/2\nu} \\
	&\le
	\sum_{n \in \NN_{0,\nu}, \ n < \eta} \left( \binom{n}{n} \ 
	(1 + \lambda n )^{\theta'}\rho_j^{2n}\right)^{-q/2\nu}
	+  	\sum_{ n \ge \eta} \left(  \ 
	(1 + \lambda n )^{\theta'}\rho_j^{2\eta}\right)^{-q/2\nu}\\
	&\le
	\sum_{n \in \NN_{0,\nu}, \ n < \eta} 
	(1 + \lambda n )^{\theta}\rho_j^{-nq/\nu}
	+  	\rho_j^{-\eta q/\nu}\sum_{ n \ge \eta} \ \binom{n}{\eta}^{-q/2\nu} \ 
	(1 + \lambda n )^{\theta} =: B_{j,1} + B_{j,2}. \\
	\end{split}
	\end{equation}
	We estimate $B_{j,1}$ and   $B_{j,2}$. 
	We have 	
	\begin{equation} \nonumber
	\begin{split}
	B_{j,1}
	&\le
	1 + \sum_{n=\nu}^{\eta - 1} 
	(1 + \lambda n )^{\theta}\rho_j^{-nq/\nu} 
	\le 
	1 + (1 + (\eta - 1)\lambda  )^{\theta}\sum_{n=\nu}^{\eta - 1} \rho_j^{-nq/\nu}.
	\end{split}
	\end{equation}
	From the inequalities $\left(\frac{n}{\eta}\right)^\eta \le \binom{n}{\eta} $ and $\eta q/2\nu - \theta >1$ we derive that
	\begin{equation} \nonumber
	\begin{split}
	B_{j,2}
	& \le
	\rho_j^{-\eta q/\nu}\sum_{ n \ge \eta}  \left(\frac{n}{\eta}\right)^{- \eta q/2\nu} 
	(1 + \lambda n )^{\theta}
	\le
	C
	\rho_j^{-\eta q/\nu}\sum_{ n \ge \eta}  n^{- (\eta q/2\nu - \theta)}
	\le
	C
	\rho_j^{-\eta q/\nu}.
	\end{split}
	\end{equation}
	Summing up we obtain that 
	\begin{equation} \nonumber
	\begin{split}
	B_j \le
	B_{j,1} + B_{j,2}
	&\le
	1 + C \sum_{n=\nu}^\eta \rho_j^{-nq/\nu}.
	\end{split}
	\end{equation}
	Since the sequence $(\rho_j^{-1}) _{j \in \NN}$ belongs to $\ell_q(\NN)$, there exists $j^*$ large enough such that  $\rho_j > 1$ for all $j \ge j^*$. Hence, there exists a constant $C$ independent of $j$ such that
	$B_j \le 1 + C \rho_j^{- q}$
	 for all $j \in \NN$, and consequently,
	\begin{equation} \nonumber
	\sum_{\bs \in \FF_\nu} p_\bs(\theta,\lambda) \sigma_\bs^{-q/2\nu}    \le
	\prod_{j \in \NN} B_j  \le 
	\prod_{j \in \NN} (1 + C  \rho_j^{-q}) \le
	\exp\Big(\sum_{j \in \NN} C \rho_j^{-q}\Big) < \infty.
	\end{equation}
	\hfill
\end{proof}

 In the present paper, as noticed in Introduction we want to  show posibilities of non-adaptive approximation methods and  convergence rates of approximation by such methods for the parametrized diffusion elliptic equation \eqref{p-ellip} with lognormal inputs.  Here we do not consider Galerkin approximations. 
To treat fully discrete approximations we assume that $f \in L_2(D)$ and that it holds the  approximation property \eqref{spatialappn} in Assumption I  for all $ v \in W$, see, for instance, \cite[Theorem 3.2.1]{Cia78} for the case when $D$ is a polygonal set.
Notice  that classical error estimates yield
the convergence rate $\alpha=1/d$ by using Lagrange finite elements of order at least $1$
on quasi-uniform partitions. Also, the spaces $W$ do not always coincide with $H^2(D)$. 
For example, for $d=2$, we know that $W$ is strictly larger than $H^2(D)$
when $D$ is a polygon with re-entrant corner. In this case, it is well known that 
the optimal rate $\alpha=1/2$ is yet attained, 
when using spaces $V_n$ associated to meshes $(\Tt_n)_{n>0}$ with proper refinement near the re-entrant corners where the functions $v\in W$ might have singularities.

\begin{theorem}\label{thm[L_2-approx]pde}
	Let $0< p \le 2$. Let Assumption I hold. Let the assumptions of Lemma~\ref {lemma[ell_2summability]} hold  for the spaces $W^1=V$ and $W^2=W$
 with some $0<q_1\leq q_2 <\infty$.	For $\xi>1$, let $G(\xi)$ be the set defined by \eqref{G(xi)G} for $\sigma_{r;\bs}$ as in \eqref{sigma_r,s}, $r=1,2$. 
 
	Then there exists a constant $C$ such that for each $n \in \NN$ there exists a number $\xi_n$  such that   $\dim(\Vv(G(\xi_n)) \le n$ and
	\begin{equation} 	\label{L_2-rate-pde}
	\|u-\Ss_{G(\xi_n)}u\|_{\Ll_p(V)} \leq C \times
	\begin{cases}
		n^{-\alpha} &\text{if } \alpha \leq  1/q_2,
		\\
		n^{-\beta}  &\text{if } \alpha >  1/q_2.
	\end{cases} 
	\end{equation}
	The rate $\alpha$ corresponds to the spatial approximation of a single function in $W$ as given by \eqref{spatialappn}, and
	the rate $\beta$ is given by \eqref{[beta]2}.
\end{theorem}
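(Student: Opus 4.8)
The plan is to obtain Theorem~\ref{thm[L_2-approx]pde} as a direct specialization of the abstract Theorem~\ref{thm[L_2-approx]}, taking $X^1 = V$ and $X^2 = W$ and feeding in the weighted summability supplied by Lemma~\ref{lemma[ell_2summability]}. First I would verify that Assumption~II holds for this pair. The inclusion $W \subset V$ together with the norm bound $\|\cdot\|_V \le C\,\|\cdot\|_W$ follows from a Poincar\'e-type estimate: for $v \in W$, integration by parts gives $\|\nabla v\|_{L_2(D)}^2 = -\int_D v\,\Delta v \le \|v\|_{L_2(D)}\|\Delta v\|_{L_2(D)} \le C\|\nabla v\|_{L_2(D)}\|\Delta v\|_{L_2(D)}$, whence $\|v\|_V \le C\|v\|_W$. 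The remaining requirement \eqref{spatialappnX} is precisely the spatial approximation property \eqref{spatialappn} granted by Assumption~I; in other words, Assumption~I is exactly Assumption~II read off for the concrete pair $(V,W)$, with the same rate $\alpha$.

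Next I would confirm that $u$ lies in the space on which Theorem~\ref{thm[L_2-approx]} operates, namely $u \in \Ll_2(W)$. This is immediate from Lemma~\ref{lemma[ell_2summability]} with $r=2$: since $\sigma_{2;\bs} \ge 1$, the bound $\sum_{\bs} (\sigma_{2;\bs}\|u_\bs\|_W)^2 < \infty$ forces $\sum_{\bs}\|u_\bs\|_W^2 < \infty$, so Parseval's identity gives $u \in \Ll_2(W)$ (measurability being recorded in Lemma~\ref{measurable}). Invoking Lemma~\ref{lemma[ell_2summability]} once for $r=1$ and once for $r=2$ then supplies the two sequences $(\sigma_{r;\bs})_{\bs \in \FF}$ of \eqref{sigma_r,s} realizing the required weighted $\ell_2$-summability of $(\|u_\bs\|_{W^r})_{\bs \in \FF}$.

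It remains to check the summability hypothesis $(\sigma_{r;\bs}^{-1})_{\bs \in \FF} \in \ell_{q_r}(\FF)$ of Theorem~\ref{thm[L_2-approx]}, and this is exactly where Lemma~\ref{lemma[bcdm]} enters. Taking $\nu = 1$ (so that $\FF_\nu = \FF$), $\theta = \lambda = 0$ (so that $p_\bs(\theta,\lambda) \equiv 1$), $q = q_r$, and choosing the free parameter $\eta > 2/q_r$, the lemma yields $\sum_{\bs \in \FF}\sigma_{r;\bs}^{-q_r} < \infty$, i.e.\ $(\sigma_{r;\bs}^{-1}) \in \ell_{q_r}(\FF)$ for $r=1,2$. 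With every hypothesis of Theorem~\ref{thm[L_2-approx]} now in force for $X^1 = V$, $X^2 = W$ and the sets $G(\xi)$ of \eqref{G(xi)G}, its conclusion \eqref{L_2-rate} translates verbatim into \eqref{L_2-rate-pde}, with $\beta$ as in \eqref{[beta]2}.

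I expect the only genuine (and minor) obstacle to be bookkeeping rather than substance. Theorem~\ref{thm[L_2-approx]} demands $\sigma_{r;\bs}$ strictly larger than $1$, whereas \eqref{sigma_r,s} gives $\sigma_{r;\bzero} = 1$ at the zero multi-index (all other values already exceed $1$, since $\brho_r$ is positive and the term $\binom{\bs}{\be_j}\rho_{r;j}^2$ contributes for any $\bs \ne \bzero$). This is harmless: replacing the single value $\sigma_{r;\bzero}$ by any constant $>1$ alters neither the weighted summability nor the $\ell_{q_r}$-membership, so the strict inequality may be assumed without loss of generality. It is also worth noting explicitly that, because $q_1 \le q_2$ gives $2/q_1 \ge 2/q_2$, a single $\eta$ can be chosen large enough to serve both $r=1$ and $r=2$ simultaneously.
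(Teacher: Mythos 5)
Your proposal is correct and follows essentially the same route as the paper, whose proof consists precisely of the observation that Lemmata~\ref{lemma[ell_2summability]}--\ref{lemma[bcdm]} verify the hypotheses of Theorem~\ref{thm[L_2-approx]} for $X^1=V$, $X^2=W$. If anything, your handling of the normalization $\sigma_{r;\bzero}=1$ (redefining that single value) is cleaner than the paper's remark about rescaling $\brho_r$, since $\sigma_{r;\bzero}$ is insensitive to such a rescaling.
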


\begin{proof} 
To prove the theorem it is sufficient to notice that the assumptions of Theorem~\ref{thm[L_2-approx]} are satisfied for $X^1 = V$ and $X^2 = W$. This can be done by using Lemmata~\ref {lemma[ell_2summability]} -- \ref{lemma[bcdm]}. (By multiplying the sequences $\brho_r$ in Lemma~\ref {lemma[ell_2summability]} with a positive constant we can get $\sigma_{r;\bs} > 1$ for $\bs \in \FF$.)
	\hfill
\end{proof}

\begin{remark}
{\rm	
(i) The rate $\min(\alpha,\beta)$ in \eqref{L_2-rate-pde} is the rate of best adaptive  $n$-term Hermite GPC expansion  approximation in $\Ll_2(V)$ based on $\ell_{p_1}$-summability of $(\|u_\bs\|_V)_{\bs \in \FF}$ and $\ell_{p_2}$-summability of $(\|u_\bs\|_W)_{\bs \in \FF}$ proven in \cite{BCDS17}, where  $1/p_r = 1/q_r + 1/2$ for $r =1,2$.

\noindent
(ii) Observe that  $\Ss_{G(\xi_n)}$  can be represented in the form of a multilevel approximation method with $k_n$ levels:
\begin{equation}  \nonumber
\Ss_{G (\xi_n)} 
\ = \ 
\sum_{k=0}^{k_n} \delta_k S_{\Lambda_k(\xi_n)},
\end{equation}
where  $S_\Lambda u:= \sum_{\bs \in \Lambda} u_\bs$ for $\Lambda \subset \FF$,  
$k_n:= \lfloor \log_2 \xi_n \rfloor$ if $\alpha \le 1/q_2$, and 
$k_n:= \lfloor (\alpha q_1)^{-1}\log_2 \xi_n \rfloor$ if $\alpha > 1/q_2$, and  for $k \in \NN_0$ and $\xi>1$,
\begin{equation} \label{Lambda_k(xi)}
\Lambda_k(\xi)
:= \ 
\begin{cases}
\big\{\bs \in \FF: \,\sigma_{2;\bs}^{q_2} \leq 2^{-k}\xi\big\} \quad &{\rm if }  \ 
\alpha \le 1/q_2 ;\\
\big\{\bs \in \FF: \, \sigma_{1;\bs}^{q_1} \le \xi, \  
\sigma_{2;\bs}^{q_1} \leq 2^{- \alpha q_1 k}\xi\big\} \quad  & {\rm if }  \ 
\alpha > 1/q_2 .
\end{cases}
\end{equation}
}
\end{remark}

	\begin{remark}
{\rm		
Since the index set $G(\xi)$ defined as in \eqref{G(xi)G} plays a key role in determining the operator $\Ss_{G (\xi)}$, we give an algorithm for constructing it, for instance, for the case $\alpha > 1/q_2$. The case $\alpha \le 1/q_2$ can be done similarly. We additionally assume that the sequences $\rho_{r;\bs}$, $r =1, 2$, are monotonically increasing. This assumption yields that
 if $\bs \in \FF$ and $i < j$ are such that $s_i = s_j = 0$, than $\sigma_{r;\bs+\be_i} \le \sigma_{r;\bs+\be_j}$, $r =1, 2$, where $\be_j := (\delta_{i,j})_{i \in \NN} \in \FF$. 
Observe that  
\begin{equation}  \nonumber
G (\xi) 
\ = \ 
\bigcup_{k=0}^{k_\xi} \big\{(k,\bs) \in \NN_0 \times\FF: \, \bs \in \Lambda_k(\xi)\big\},
\end{equation}
where  $k_\xi:= \lfloor \log_2 \xi \rfloor$ if $\alpha \le 1/q_2$, and 
$k_\xi:= \lfloor (\alpha q_1)^{-1}\log_2 \xi\rfloor$ if $\alpha > 1/q_2$, and $\Lambda_k(\xi)$ is defined as in \eqref{Lambda_k(xi)}, $\xi>1$.
Moreover,  $\Lambda_k(\xi)$ are downward closed sets, and consequently, the sequence $\big\{\Lambda_k(\xi)\big\}_{k=0}^{k_\xi}$ is nested in the inverse order, i.e., $\Lambda_{k'}(\xi) \subset \Lambda_k(\xi)$ if $k' > k$, and 
$\Lambda_0(\xi)$ is the largest and $\Lambda_{k_\xi}(\xi)$ is the smallest.
Hence,  the index set $G(\xi)$ can be constructed as in Algorithm~\ref{algo}.
}

\begin{algorithm}[H] \label{algo}
	\DontPrintSemicolon
	
	$\bs = \bzero$ 
	
	$G(\xi)=\emptyset$
	
	\If{$\sigma_{1,\bs}\leq \xi^{1/q_1} \AND \sigma_{2,\bs}\leq \xi^{1/q_1}$}
	{
		$G(\xi) \leftarrow \{(0,\bs)\}$
		
		$k\leftarrow 1$
		
		\While{$ \sigma_{2,\bs}\leq 2^{-\alpha k}\xi^{1/q_1}$}
		{
			$G(\xi)\leftarrow G(\xi) \cup \{(k,\bs) \}$
			
			$k\leftarrow k+1$
		}
	}
	\While{\rm True}
	{
		$d\leftarrow 1$
		
		\While{$\sigma_{1,\bs+\be_d}>\xi^{1/q_1}\OR \sigma_{2,\bs+\be_d}>\xi^{1/q_1}$}
		{
			\If{$s_d\not=0$}
			{
				$s_d\leftarrow 0$
				
				$d\leftarrow d+1$
			}
			\ElseIf{$\bs\not =\bzero$}
			{
				$d=\min\{j\in \mathbb{N}:\ s_j\not =0\}$
			}
			\Else
			{
				{\rm break}
			}
		}
		
		$\bs\leftarrow\bs+\be_d$
		
		$G(\xi)\leftarrow G(\xi) \cup \{(0,\bs) \}$
		
		$k\leftarrow 1$
		
		\While{$ \sigma_{2,\bs}\leq 2^{-\alpha k}\xi^{1/q_1}$}
		{
			$G(\xi)\leftarrow G(\xi) \cup \{(k,\bs) \}$
			
			$k\leftarrow k+1$
		}
	}
	\Return $G(\xi)$

	\caption{Constructing $G(\xi)$}
\end{algorithm}

{\rm
Let us estimate the computational complexity of Algorithm 1, by using some results from \cite[Lemmata 3.1.12 and 3.1.13]{Ze18}. Each  from 1st to 5th lines and 10th to 21st lines in this algorithm is executed at most 
$4|\Lambda_0(\xi)| + 1$ times. For every multi-index $\bs \in \Lambda_0(\xi)$ we store $\{(j,s_j)\, : s_j \not=0\}$. Each multi-index therefore occupies a memory of size $\Oo(d(\Lambda_0(\xi)))$, where  $d(\Lambda):= \sup_{\bs \in \Lambda}|\supp \bs|$. Assuming  elementary operations such as multiplications and divisions to be of complexity $\Oo(1)$, we can deduce that the computational complexity executing each  from 1st to 5th lines and 10th to 21st lines in Algorithm 1 is bounded by $\Oo(m(\Lambda_0(\xi)) + d(\Lambda_0(\xi)))$, where 
 $m(\Lambda):= \sup_{\bs \in \Lambda}|\sum_{j \in \NN} s_j|$.  Algorithm 1 terminates and gives $G(\xi)$ before $k > k_\xi$. Hence the overall computational complexity and memory consumption of Algorithm 1 behave like
\[
\Oo\big(k_\xi |\Lambda_0(\xi)| (m(\Lambda_0(\xi)) + d(\Lambda_0(\xi)))\big)
\ = \ \Oo\Bigg(\log_2 \xi \sum_{\bs \in  \Lambda_0(\xi)} p_\bs (1,1)\Bigg)
\ = \ \Oo(\xi\log_2 \xi).
\]
In the last step we used the equality $\sum_{\sigma_{1,\bs} \le \xi^{1/q_1}} = \Oo(\xi)$ which follows from the inequality 
$\sum_{\bs \in \FF}p_\bs (1,1)\sigma_{1,\bs}^{-q_1} < \infty$ (cf. \eqref{Gamma(Lambda(xi))}).
}
\end{remark}

\medskip
We now consider the problem of sparse-grid interpolation approximation and integration of the solution  $u(\by)$ to 
 the parameterized diffusion elliptic equation \eqref{p-ellip} with lognormal inputs. 
By using Lemmata \ref{lemma[ell_2summability]} -- \ref{lemma[bcdm]},   in the same way as the proof of Theorem \ref{thm[L_2-approx]pde}, from Theorems~\ref{thm[L_2-approx]} and \ref{thm[coll-approx]} and  Corollary~\ref{corollary[coll-approx]} we derive the following two theorems and two corollaries.  

\begin{theorem}\label{thm[coll-approx]pde}
Let $0 < p \le 2$.	Let Assumption I hold. Let the assumptions of Lemma~\ref {lemma[ell_2summability]} hold  for the spaces $W^1=V$ and $W^2=W$
	with some $0<q_1\leq q_2 <\infty$ with $q_1 <2$. Assume that
	$(Y_m)_{m \in \NN_0}$ is a sequence satisfying the condition \eqref{ineq[lambda_m]} for some positive numbers $\tau$ and $C$. For $\xi>1$, let $G(\xi)$ be the set defined by \eqref{G_(xi)} for  $\sigma_{r;\bs}$ as in \eqref{sigma_r,s}, $r=1,2$.  
	
		Then there exists a constant $C$ such that  for each $n \in \NN$, there exists a number $\xi_n$  such that  
for the interpolation operator $\Ii_{G(\xi_n)}: \Ll_2^\Ee(W) \to \Vv(G(\xi_n))$, we have that $\dim \Vv(G(\xi_n)) \le  n$ and 
	\begin{equation} \label{u-I_Gu, p le 2-pde}
	\|u -\Ii_{G(\xi_n)}u\|_{\Ll_p(V)} \leq C \times
	\begin{cases}
		n^{-\alpha} &\text{if } \alpha \leq  1/q_2 - 1/2,
		\\
		n^{-\beta} \log n &\text{if } \alpha >  1/q_2 - 1/2.
	\end{cases} 
	\end{equation}
	The rate $\alpha$ corresponds to the spatial approximation of a single function in $W$ as given by \eqref{spatialappn}.
	The rate $\beta$ is given by \eqref{[beta]1}.
\end{theorem}

\begin{remark}
{\rm	
(i) Observe that  $\Ii_{G(\xi_n)}$  can be represented in the form of a multilevel approximation method, see  Remarks \ref{rm 3.1a}(i) for details.

\noindent
(ii)
The fully discrete polynomial interpolation approximation by operators $\Ii_{G(\xi_n)}$ is a collocation approximation based on the finite number $|\Gamma(\Lambda_0(\xi_n))| \le \sum_{\bs \in \Lambda_0(\xi_n)}p_\bs(1,2)$ of the particular solvers  $u(\by)$, $\by \in \Gamma(\Lambda_0(\xi_n))$, where, we recall, $\Gamma(\Lambda_0(\xi_n)) = \cup_{\bs \in \Lambda_0(\xi_n)} \Gamma_\bs$ and 
$\Gamma_\bs = \{\by_{\bs - \be;\bm}: \, e \in \EE_\bs; \ m_j = 0,...,s_j - e_j, \ j \in \NN \}$ ($\EE_\bs$ denotes  the subset in $\FF$ of all $\be$ such that $e_j$ is $1$ or $0$ if $s_j > 0$, and $e_j$ is $0$ if $s_j = 0$, and $\by_{\bs;\bm}:= (y_{s_j;m_j})_{j \in \NN}$.)
}
\end{remark}

\begin{corollary}\label{corollary[collocation]pde} Let $0 < p \le 2$.
	Under the hypothesis of Lemma~\ref {lemma[ell_2summability]}  for the spaces $W^1=V$ with some $0 < q_1 = q  < 2$. For $\xi>1$, let $\Lambda(\xi)$ be the set defined by \eqref{Lambda(xi)} for  $\sigma_\bs=\sigma_{1;\bs}$ as in \eqref{sigma_r,s}. 	Then there exists a constant $C$ such that
	for each $n \in \NN$, there exists a number $\xi_n$  such that  $|\Gamma(\Lambda(\xi_n))| \le n$ and 
	\begin{equation} \label{u-I_Lambdau, p le 2pde}
	\|u -I_{\Lambda(\xi_n)}u\|_{\Ll_p(V)} \leq Cn^{-(1/q - 1/2)}.
	\end{equation}
\end{corollary}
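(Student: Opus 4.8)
The plan is to obtain Corollary~\ref{corollary[collocation]pde} as a direct specialization of the abstract Corollary~\ref{corollary[coll-approx]} to the solution map of \eqref{ellip}, taking $X = V$ and $v = u$, and choosing the weight sequence to be $\sigma_\bs = \sigma_{1;\bs}$ defined in \eqref{sigma_r,s} with $r = 1$ and a sufficiently large integer parameter $\eta$. Concretely, I would first fix the free parameter $\eta$ in \eqref{sigma_r,s} and then verify, one by one, the three structural hypotheses of Corollary~\ref{corollary[coll-approx]}: that $u \in \Ll_2^\Ee(V)$ and admits the Hermite expansion \eqref{series}; that $(\sigma_\bs)_{\bs \in \FF}$ is an increasing sequence of numbers strictly larger than $1$ with $\sum_{\bs \in \FF}(\sigma_\bs \|u_\bs\|_V)^2 < \infty$; and that $(p_\bs(2\theta,\max(2,\lambda))\sigma_\bs^{-1})_{\bs \in \FF} \in \ell_q(\FF)$, where $\theta$ and $\lambda$ are the constants in \eqref{theta,lambda2}. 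Once these are in place, the conclusion \eqref{u-I_Lambdau, p le 2pde}, together with the cardinality bound on $\Gamma(\Lambda(\xi_n))$, is exactly the output of Corollary~\ref{corollary[coll-approx]}.

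The first two hypotheses are essentially already recorded earlier. Membership $u \in \Ll_2^\Ee(V)$, with $\Ee$ as defined in Lemma~\ref{measurable}, follows from Lemma~\ref{measurable} under the stated assumptions on $f$, $D$ and $(\psi_j)$ for $W^1 = V$ with $0 < q = q_1 < 2$; this also guarantees that the point values $u(\by)$ needed for the interpolation operator $I_{\Lambda(\xi)}$ are well defined. The weighted $\ell_2$-summability $\sum_{\bs}(\sigma_{1;\bs}\|u_\bs\|_V)^2 < \infty$ is precisely Lemma~\ref{lemma[ell_2summability]} with $r = 1$, valid for every $\eta \in \NN$. Monotonicity of $(\sigma_{1;\bs})$ I would read off directly from \eqref{sigma_r,s}, since $\binom{\bs}{\bs'}$ is nondecreasing in $\bs$ coordinatewise, so $\bs' \le \bs$ implies $\sigma_{1;\bs'} \le \sigma_{1;\bs}$; to force the strict inequality $\sigma_{1;\bs} > 1$ I would invoke the same harmless rescaling of the weight sequence used in the proof of Theorem~\ref{thm[L_2-approx]pde}, which affects both summability conditions only by a finite constant factor.

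The one step needing genuine verification is the $\ell_q$-summability of the weighted product sequence, and here Lemma~\ref{lemma[bcdm]} does the work, applied with $\nu = 1$ (so that $\FF_\nu = \FF$) and with the sequence $\brho = \brho_1$, whose reciprocal lies in $\ell_q(\NN)$ by assumption. The only bookkeeping is the elementary identity $\big(p_\bs(2\theta,\max(2,\lambda))\big)^q = p_\bs(2\theta q,\max(2,\lambda))$, immediate from the product form \eqref{[p_s]}, which turns the requirement $(p_\bs(2\theta,\max(2,\lambda))\sigma_{1;\bs}^{-1})_{\bs} \in \ell_q(\FF)$ into the finiteness of $\sum_{\bs \in \FF} p_\bs(2\theta q,\max(2,\lambda))\,\sigma_{1;\bs}^{-q}$; Lemma~\ref{lemma[bcdm]} (with its $\theta$ taken to be $2\theta q$ and its $\lambda$ taken to be $\max(2,\lambda)$) delivers exactly this, provided $\eta > 2(2\theta q + 1)/q$. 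I do not anticipate a real obstacle: the whole argument is a matter of matching notation and choosing $\eta$ large enough, after which Corollary~\ref{corollary[coll-approx]} applies verbatim. The only point to watch is to fix $\eta$ \emph{before} invoking Lemma~\ref{lemma[ell_2summability]}, so that the single integer $\eta$ simultaneously serves the $\ell_2$-summability in \eqref{sigma_r,s} and the admissibility threshold required by Lemma~\ref{lemma[bcdm]}.
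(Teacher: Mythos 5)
Your proposal is correct and follows essentially the same route as the paper, which obtains this corollary by specializing Corollary~\ref{corollary[coll-approx]} to $X=V$, $v=u$, verifying the weighted $\ell_2$-summability via Lemma~\ref{lemma[ell_2summability]} with $r=1$, the measurability and point-value issues via Lemma~\ref{measurable}, the $\ell_q$-condition via Lemma~\ref{lemma[bcdm]} with $\nu=1$ and $\eta$ large, and the normalization $\sigma_{1;\bs}>1$ by rescaling $\brho_1$. Your bookkeeping (including the identity $p_\bs(2\theta,\lambda)^q=p_\bs(2\theta q,\lambda)$ and the threshold $\eta>2(2\theta q+1)/q$) matches what the paper leaves implicit.
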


The rate $1/q - 1/2$ in Corollary~\ref{corollary[collocation]pde} is much better than the rate $\frac{1}{2}(1/q- 1/2)$ which has been obtained in \cite[Theorem 3.18]{EST18} for a similar approximation in $\Ll_2(V)$.

 Similarly to  $\Ii_{G(\xi_n)}$, the approximation to $u$ by the operator $I_{\Lambda(\xi_n)}$,  is a collocation approximation based on the finite number $|\Gamma(\Lambda(\xi_n))| \le \sum_{\bs \in \Lambda(\xi_n)}p_\bs(1,2)$ of the particular solvers  $u(\by)$, $\by \in \Gamma(\Lambda(\xi_n))$.

For $\xi>1$, let the set $G_\rev^*(\xi)$ be  defined by 
\begin{equation} \label{G_rev(xi)2}
	G_\rev^*(\xi)
	:= \ 
	\begin{cases}
		\big\{(k,\bs) \in \NN_0 \times\FF_\rev: \, 2^k \sigma_{2;\bs}^{q_2/2} \leq \xi\big\} \ &{\rm if }  \ \alpha \le 2/q_2 - 1/2;\\
		\big\{(k,\bs) \in \NN_0 \times\FF_\rev: \, \sigma_{1;\bs}^{q_1/2} \le \xi , \  
		2^{\tau k} \sigma_{2;\bs}\leq \xi^\vartheta \big\} \  & {\rm if }  \ \alpha > 2/q_2 - 1/2,
	\end{cases}
\end{equation}
for  $\sigma_{r;\bs}$ as in \eqref{sigma_r,s}, $r=1,2$, where 
\begin{equation} \label{tau,vartheta}
	\tau:= \frac{4\alpha}{4 - q_2}, \qquad		
	\vartheta:= \frac{4}{4 - q_2} \left(\frac{2}{q_1} - \frac{1}{2}\right).
\end{equation}

\begin{theorem}\label{thm[quadrature]pde}
	Let Assumption I hold. Let the assumptions of Lemma~\ref {lemma[ell_2summability]} hold  for the spaces $W^1=V$ and $W^2=W$
	with some $0<q_1\leq q_2 <\infty$ with $q_1 < 4$. Assume that
	$(Y_m)_{m \in \NN_0}$ is a sequence satisfying the condition \eqref{ineq[lambda_m]} for some positive numbers $\tau$ and $C$, and such that $Y_m$ is symmetric for every 
	$m \in \NN_0$. Let the set $G_\rev^*(\xi)$ be  defined as in \eqref{G_rev(xi)2}.
	Then we have the following.
	\begin{itemize}
		\item[{\rm (i)}]
There exists a constant $C$ such that
for each $n \in \NN$, there exists a number $\xi_n$  such that  $\dim\Vv(G_\rev^*(\xi_n))\le n$ and 
		\begin{equation} \label{u-Q_Gu-quadrature}
		\left\|\int_{\RRi}v(\by)\, \rd \gamma(\by ) - \Qq_{G_\rev^*(\xi_n)}v\right\|_V \leq C \times
	\begin{cases}
		n^{-\alpha} &\text{if } \alpha \leq  2/q_2 - 1/2,
		\\
		n^{-\beta} \log n &\text{if } \alpha >  2/q_2 - 1/2.
	\end{cases} 
		\end{equation}
		\item[{\rm (ii)}] 
		Let  $\phi \in V'$ be a bounded linear functional on $V$. Then there exists a constant $C$ such that
		for each $n \in \NN$ there exists a number $\xi_n$  such that  $\dim\Vv(G_\rev^*(\xi_n))\le n$ and 
		\begin{equation} \label{u-Q_Gu_phi-pde}
		\left|\int_{\RRi} \langle \phi,  v (\by) \rangle\, \rd \gamma(\by ) - \Qq_{G_\rev^*(\xi_n)} \langle \phi,  v \rangle\right| \leq C \|\phi\|_{V'} \times
	\begin{cases}
		n^{-\alpha} &\text{if } \alpha \leq  2/q_2 - 1/2,
		\\
		n^{-\beta} \log n &\text{if } \alpha >  2/q_2 - 1/2.
	\end{cases} 
		\end{equation}
	\end{itemize}
	The rate $\alpha$ corresponds to the spatial approximation of a single function in $W$ as given by
	 \eqref{spatialappn}. The rate $\beta$ is given by 
	\begin{equation} 	\nonumber
\beta := \left(\frac 2 {q_1} - \frac{1}{2}\right)\frac{\alpha}{\alpha + \delta}, \ \ \
\delta := \frac 2 {q_1} - \frac 2 {q_2}.
\end{equation}	 
\end{theorem}

\begin{proof}
Observe that $\FF_\rev \subset \FF_2$.
From Lemma~\ref {lemma[ell_2summability]} and Lemma~\ref{lemma[bcdm]} we can see that 	the assumptions of Theorem~\ref{thm[coll-approx]} hold for $X^1=V$	and $X^2 = W$ with $0 < q_1/2 \le q_2/2 < \infty$ and $q_1/2 < 2$. Hence, by applying Theorem~\ref{thm[quadrature]} we prove the theorem.
	\hfill
\end{proof}

Observe that the rate in \eqref{u-Q_Gu-quadrature} and \eqref{u-Q_Gu_phi-pde} can be improved as 
$\min(\alpha,  \frac 2 {q_1} \frac{\alpha}{\alpha + \delta})$ if the sequences  $(\|u_\bs\|_V)_{\bs \in \FF}$ and $(\|u_\bs\|_{W^r})_{\bs \in \FF}$ have $\ell_{p_1}$- and  $\ell_{p_r}$-summable majorant sequences, respectively, where  $1/p_1 = 1/q_1 + 1/2$ and $1/p_r = 1/q_r + 1/2$. 
Similarly to $\Ii_{G(\xi_n)}$, the quadrature operator  $\Qq_{G_\rev^*(\xi_n)}$ 
can be represented in the form of a multilevel integration method with $k_n$ levels:
\begin{equation}  \nonumber
\Qq_{G_\rev ^*(\xi_n)} 
\ = \ 
\sum_{k=0}^{k_n} \delta_k Q_{\Lambda_{\rev;k}^*(\xi_n)}, 
\end{equation}
where  $k_n:= \lfloor \log_2 \xi_n \rfloor$ if $\alpha \le 1/q_2 - 1/2$, and 
$k_n:= \lfloor \vartheta \tau^{-1}\log_2 \xi_n \rfloor$ if $\alpha > 1/q_2 - 1/2$, and  for $k \in \NN_0$ and $\xi>1$,
\begin{equation} \nonumber
\Lambda_{\rev;k}^*(\xi)
:= \ 
\begin{cases}
\big\{\bs \in \FF_\rev: \,\sigma_{2;\bs}^{q_2/2} \leq 2^{-k}\xi\big\} \quad &{\rm if }  \ 
\alpha \le 1/q_2 - 1/2;\\
\big\{\bs \in \FF_\rev: \, \sigma_{1;\bs}^{q_1/2} \le \xi , \  
\sigma_{2;\bs} \leq 2^{- \tau k}\xi^\vartheta\big\} \quad  & {\rm if }  \ 
\alpha > 1/q_2 - 1/2.
\end{cases}
\end{equation}

For $\xi>1$, let the set $\Lambda_\rev^*(\xi)$ be defined by 
\begin{equation} \label{Lambda_rev(xi)2}
	\Lambda_\rev^*(\xi):= 
	\{\bs \in \FF_\rev: \, \sigma_\bs^{q/2} \le \xi\}.
\end{equation}

In the same way, from  Corollary~\ref{corollary[quadrature]} we derive the following  

\begin{corollary}\label{corollary[quadrature]pde}
	Let the assumptions of Lemma~\ref {lemma[ell_2summability]} hold  for the spaces $W^1=V$ 
	with some $0<q_1 = q < 4$. Assume that
	$(Y_m)_{m \in \NN_0}$ is a sequence satisfying the condition \eqref{ineq[lambda_m]} for some positive numbers $\tau$ and $C$, and such that $Y_m$ is symmetric for every $m \in \NN_0$. For $\xi>1$, let the set $\Lambda_\rev^*(\xi)$ be defined by 
	\eqref{Lambda_rev(xi)2}
	 for $\sigma_\bs=\sigma_{1;\bs}$ as in \eqref{sigma_r,s}. Then we have the following.
	\begin{itemize}
	\item[{\rm (i)}]	
	There exists a constant $C$ such that for each  $n \in \NN$, there exists a number $\xi_n$  such that  $|\Gamma(\Lambda_\rev^*(\xi_n))| \le n$ and 
\begin{equation} \label{u-Q_Lambdau_pde}
\left\|\int_{\RRi}u(\by)\, \rd \gamma(\by ) - Q_{\Lambda_\rev^*(\xi_n)}u\right\|_V
\ \le \
Cn^{-(2/q - 1/2)}.
\end{equation}			
	\item[{\rm (ii)}] 
	Let  $\phi \in V'$ a bounded linear functional on $V$. 
	There exists a constant $C$ such that for each $n \in \NN$, there exists a number $\xi_n$  such that  $|\Gamma(\Lambda_\rev^*(\xi_n))| \le n$ and 
	\begin{equation} \label{u-Q_Lambdau_phi-pde}
	\left|\int_{\RRi} 
	\langle \phi, u (\by) \rangle\, \, \rd \gamma(\by ) - Q_{\Lambda_\rev^*(\xi_n)} \langle \phi, u \rangle \right|
	\ \le \
	C\|\phi\|_{V'} n^{-(2/q - 1/2)}.
	\end{equation}
\end{itemize}		  
\end{corollary}

In a similar way, from Theorem \ref{thm[quadrature2]} and Corollary \ref{corollary[quadrature2]} we obtain

\begin{theorem}\label{thm[quadrature]pde2}
	Let Assumption I hold. Let the assumptions of Lemma~\ref {lemma[ell_2summability]} hold  for the spaces $W^1=V$ and $W^2=W$
	with some $0<q_1\leq q_2 <\infty$ with $q_1 < 4$. Assume that
	$(Y_m)_{m \in \NN_0}$ is a sequence satisfying the condition \eqref{ineq[lambda_m]} for some positive numbers $\tau$ and $C$, and such that $Y_m$ is symmetric for every 
	$m \in \NN_0$. 	For $\xi>1$, let $G_2^*(\xi)$ be the set defined by
	\begin{equation} \label{G_2(xi)2}
		G_2^*(\xi)
		:= \ 
		\begin{cases}
			\big\{(k,\bs) \in \NN_0 \times\FF_2: \, 2^k \sigma_{2;\bs}^{q_2/2} \leq \xi\big\} \quad &{\rm if }  \ \alpha \le 2/q_2 - 1/2;\\
			\big\{(k,\bs) \in \NN_0 \times\FF_2: \, \sigma_{1;\bs}^{q_1/2} \le \xi , \  
			2^{\tau  k} \sigma_{2;\bs}\leq \xi^\vartheta \big\} \quad  & {\rm if }  \ \alpha > 2/q_2 - 1/2,
		\end{cases}
	\end{equation}
	 for
	$\sigma_{r;\bs}$ as in \eqref{sigma_r,s}, $r=1,2$,
	where $\tau$ and  $\vartheta$ are as in \eqref{tau,vartheta}.

	Then  we have the following.
	\begin{itemize}
		\item[{\rm (i)}]
			There exists a constant $C$ such that for each $n \in \NN$, there exists a number $\xi_n$  such that  $\dim\Vv(G_2^*(\xi_n))\le n$ and 
		\begin{equation} \label{u-Q_Gu-quadrature2}
			\left\|\int_{\RRi}v(\by)\, \rd \gamma(\by ) - \tilde{\Qq}_{G_2^*(\xi_n)}v\right\|_V \leq C \times
	\begin{cases}
		n^{-\alpha} &\text{if } \alpha \leq  2/q_2 - 1/2,
		\\
		n^{-\beta} \log n  &\text{if } \alpha >  2/q_2 - 1/2.
	\end{cases} 
		\end{equation}
		\item[{\rm (ii)}] 
		Let  $\phi \in V'$ be a bounded linear functional on $V$. 	Then there exists a constant $C$ such that for each $n \in \NN$, there exists a number $\xi_n$  such that  $\dim\Vv(G_2^*(\xi_n))\le n$ and 
		\begin{equation} \label{u-Q_Gu_phi-pde2}
			\left|\int_{\RRi} \langle \phi,  v (\by) \rangle\, \rd \gamma(\by ) - \tilde{\Qq}_{G_2^*(\xi_n)} \langle \phi,  v \rangle\right| \leq C \|\phi\|_{V'}  \times 
	\begin{cases}
		n^{-\alpha} &\text{if } \alpha \leq  2/q_2 - 1/2,
		\\
		n^{-\beta} \log n  &\text{if } \alpha >  2/q_2 - 1/2.
	\end{cases} 
		\end{equation}
	\end{itemize}
	The rate $\alpha$ corresponds to the spatial approximation of a single function in $W$ as given by
	\eqref{spatialappn}. The rate $\beta$  are  given by 
	\begin{equation} 	\nonumber
		\beta := \left(\frac 2 {q_1} - \frac{1}{2}\right)\frac{\alpha}{\alpha + \delta}, \ \ \ 
		\delta := \frac 2 {q_1} - \frac 2 {q_2}.
	\end{equation}	 
\end{theorem}

\begin{corollary}\label{corollary[quadrature]pde2}
	Let the assumptions of Lemma~\ref {lemma[ell_2summability]} hold  for the spaces $W^1=V$ 
	with some $0<q_1 = q < 4$. Assume that
	$(Y_m)_{m \in \NN_0}$ is a sequence satisfying the condition \eqref{ineq[lambda_m]} for some positive numbers $\tau$ and $C$, and such that $Y_m$ is symmetric for every $m \in \NN_0$. For $\xi>1$, let $\Lambda_2^*(\xi)$ be the set defined by
	\begin{equation*} 
		\Lambda_2^*(\xi):= 
		\{\bs \in \FF_2: \, \sigma_\bs^{q/2} \le \xi\}
	\end{equation*}
	for $\sigma_\bs=\sigma_{1;\bs}$ as in \eqref{sigma_r,s}. Then we have the following.
	\begin{itemize}
		\item[{\rm (i)}]	
		There exists a constant $C$ such that  for each $n \in \NN$, there exists a number $\xi_n$  such that  $|\Gamma(\Lambda_2^*(\xi_n))| \le n$ and 
		\begin{equation} \label{u-Q_Lambdau_pde2}
			\left\|\int_{\RRi}u(\by)\, \rd \gamma(\by ) - \tilde{Q}_{\Lambda_2^*(\xi_n)}u\right\|_V
			\ \le \
			Cn^{-(2/q - 1/2)}.
		\end{equation}			
		\item[{\rm (ii)}] 
		Let  $\phi \in V'$ a bounded linear functional on $V$. 
		Then there exists a constant $C$ such that for each $n \in \NN$, there exists a number $\xi_n$  such that  $|\Gamma(\Lambda_2^*(\xi_n))| \le n$ and 
		\begin{equation} \label{u-Q_Lambdau_phi-pde2}
			\left|\int_{\RRi} \langle \phi, u (\by) \rangle\, \, \rd \gamma(\by ) - \tilde{Q}_{\Lambda_2^*(\xi_n)} \langle \phi, u \rangle \right|
			\ \le \
			C\|\phi\|_{V'}  n^{-(2/q - 1/2)}.
		\end{equation}
	\end{itemize}		  
\end{corollary}

\begin{remark} \label{remark5.1}
{\rm	
(i)  As noticed in Section \ref{Integration}, the sparsity of the grids $\Gamma(\Lambda_{\rev;0}^*(\xi))$ and $\Gamma(\Lambda_\rev^*(\xi))$ of the evaluation points  in the quadrature operators $\Qq_{G_\rev^*(\xi)}$ and $Q_{\Lambda_\rev^*(\xi)}$ are much higher than the sparsity of the  grids $\Gamma(\Lambda_0(\xi))$ and  $\Gamma(\Lambda(\xi))$ of the evaluation points  in the generating  interpolation operators $\Ii_{G(\xi)}$ and $I_{\Lambda(\xi)}$. 

\noindent 
(ii) The rate $2/q - 1/2$ in Corollary~\ref{corollary[quadrature]pde}  is a significant improvement of the rate $\frac{1}{2}(1/q - 1/2)$ which has been recently obtained in  \cite[Corllary 3.12]{Ch18}. 

\noindent 
(iii) Since the use and analysis of non-adaptive construction methods for sparse-grid interpolation are important, let us compare in details our methods in Corollary \ref{corollary[quadrature]pde} with those which has been also discussed in \cite{Ch18}.  To construct a quadrature of the form $Q_\Lambda$, the author of the last work used the set $\Lambda_m \subset \FF$ of all indices $\bs$ (including non-even) corresponding to the $m$ smallest values of $\sigma_\bs$. The number $n= n(m)$ of quadrature points in $\Lambda_m \subset \FF$ is estimated as $n \le C m^2$ \cite[Proposition 3.16]{EST18}. This lead to the rate  $\frac{1}{2}(1/q - 1/2)$. In the present paper, we used the set $\Lambda_\rev^*(\xi)\subset \FF_\rev$ of all only even indices $\bs$ by thresholding $\sigma_\bs \le \xi^{1/q}$. Formally, this is similar to choosing all even indices $\bs$  corresponding to the  smallest values of $\sigma_\bs$ satisfying $\sigma_\bs \le \xi^{1/q}$. Then for a given $n \in \NN$, we selected a number $\xi_n$ such that the number  of quadrature points in the grid $\Gamma(\Lambda_\rev^*(\xi_n))$ does not exceed $n$. Hence, due to the evenness of the indices in  the set $\Lambda_\rev^*(\xi_n)$ we obtained 
 the improved rate $2/q - 1/2$ and that the sparsity of $\Lambda_\rev^*(\xi_n)$ is much higher then that of $\Lambda_{m(n)}$.
}
\end{remark}

\section{Elliptic PDEs with  affine inputs}
\label {affine inputs}

The theory of non-adaptive  approximation and integration of functions  in Bochner spaces with infinite tensor product Gaussian measure in Sections \ref{Linear Galerkin approximation}--\ref{Integration} can be generalized and extended to other situations.  In this section, we present some results on similar problems for  the parameterized diffusion elliptic equation \eqref{p-ellip} with the affine inputs \eqref{affine}. 

In the affine case, for given $a,b > -1$, we consider  the orthogonal Jacobi expansion of the solution $u(\by)$  of the form
\begin{equation} \nonumber
\sum_{\bs\in\FF} u_\bs J_\bs(\by), \quad J_\bs(\by)=\bigotimes_{j \in \NN}J_{s_j}(y_j),\quad u_\bs:=\int_\II u(\by)J_\bs(\by) \rd\nu_{a,b}(\by),
\end{equation}
where
\begin{equation} \nonumber
\rd \nu_{a,b}(\by):=\bigotimes_{j \in \NN} \delta_{a,b}(y_j)\,\rd y_j,
\end{equation}
\[
\delta_{a,b}(y):=c_{a,b}(1-y)^a(1+y)^b, \quad
c_{a,b}:=\frac{\Gamma(a+b+2)}{2^{a+b+1}\Gamma(a+1)\Gamma(b+1)},
\] 
and  $(J_k)_{k\geq 0}$ is the sequence of Jacobi polynomials on $\II := [-1,1]$ 
normalized with respect to the Jacobi probability measure
$
\int_{\II} |J_k(y)|^2 \delta_{a,b}(y) \rd y =1.
$
One has the Rodrigues' formula
\[
J_k(y ) 
\ = \
\frac{c_k^{a,b}}{k! 2^k}(1-y)^{-a}(1+y)^{-b} \frac{\rd^k}{\rd y^k} 
\left((y^2-1)^k(1-y)^a(1+y)^b\right),
\]
where $c_0^{a,b}:= 1$ and
\[
c_k^{a,b}
:= \
\sqrt{\frac{(2k+a+b+1)k! \Gamma(k+a+b+1) \Gamma(a+1) \Gamma(b+1)}
	{\Gamma(k+a+1)\Gamma(k+b+1)\Gamma(a+b+2)}}, \ k \in \NN.
\]
Examples corresponding to the values $a=b=0$  is the family of the Legendre polynomials, and to the values $a=b=-1/2$  the family of the Chebyshev polynomials.

We introduce the space
$W^r:=\{v\in V\; : \; \Delta v\in H^{r-2}(D)\}$
for $r\ge 2$ with the convention $W^1:= V$.
This space is equipped with the norm
$\|v\|_{W^r}:=\|\Delta v\|_{H^{r-2}(D)}$,
and coincides with the Sobolev space $V\cap H^{r-2}(D)$ with equivalent norms if the domain $D$ has $C^{r-1,1}$ smoothness,
see \cite[Theorem 2.5.1.1]{Gr85}. 
The following lemma has been proven in \cite{BCM17} for $r=1$ and in \cite{BCDS17} for $r > 1$.
\begin{lemma} \label{theorem[summability]J}	
	For a given $r \in \NN$,
	assume that $\bar a\in L^\infty(D)$  is such that ${\rm ess} \inf \bar a>0$,
	and that there exists a sequence $\brho_r=(\rho_{r;j}) _{j \in \NN}$ of positive numbers such that 
	\begin{equation} \nonumber
	\left \| \frac{\sum _{j \in \NN} \rho_{1;j}|\psi_j|}{\bar a} \right \|_{L^\infty(D)} 
	< 1.
	\end{equation} 	
	Assume that the right side $f$ in \eqref{ellip}
	belongs to $H^{r-2}(D)$, that the domain $D$ has $C^{r-2,1}$ smoothness,
	that $\bar a$ and all functions $\psi_j$ belong to $W^{r-1,\infty}(D)$ and that
	\begin{equation} \nonumber
	\sup_{|\alpha|\leq r-1} 
	\left\| \sum _{j \in \NN} \rho_{r;j} |D^\alpha \psi_j| \right\|_{L^\infty(D)} 
	<\infty.
	\end{equation}	
	Then 
		\begin{equation} \label{beta_r,s}
		\sum_{\bs\in\FF} (\sigma_{r;\bs} \|u_\bs\|_{W^r})^2 <\infty, \quad 
		\sigma_{r;\bs} := \brho_r^\bs\prod _{j \in \NN} c_{s_j}^{a,b}.
		\end{equation}	
\end{lemma}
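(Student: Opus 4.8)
The plan is to follow the route of \cite{BCM17} for $r=1$ and of \cite{BCDS17} for $r>1$: first establish quantitative holomorphic dependence of the solution map $\by \mapsto u(\by)$ on the parameters, then convert these bounds into decay estimates for the Jacobi coefficients $u_\bs$, and finally carry out a weighted summation over $\FF$. As a preliminary step I would record that the strict inequality $\big\|\sum_{j}\rho_{1;j}|\psi_j|/\bar a\big\|_{L^\infty(D)} < 1$ together with $\operatorname{ess}\inf \bar a > 0$ guarantees uniform ellipticity of $a(\by)$ not only over $\by \in \IIi$ but over a complex neighborhood obtained by replacing each $y_j$ by a point in a Bernstein ellipse of radius tied to $\rho_{1;j}$, so that by Lax--Milgram the solution $u(\by) \in V$ is well defined, uniformly bounded, and depends holomorphically on $\by$ there.

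The next step is to bound the mixed partial derivatives $\partial^\bs u(\by)$. Differentiating the parametric variational identity and using that the affine structure \eqref{affine} gives $\partial_{y_j} a = \psi_j$ and $\partial_{y_j}^2 a = 0$, one obtains for $\bs \neq \bzero$ the recursion
\begin{equation} \nonumber
\int_D a(\by)\,\nabla\big(\partial^\bs u\big)\cdot\nabla v\,\rd\bx
\ = \
-\sum_{j \in \supp(\bs)} s_j \int_D \psi_j\,\nabla\big(\partial^{\bs-\be_j} u\big)\cdot\nabla v\,\rd\bx .
\end{equation}
Testing with $v = \partial^\bs u$ and using ellipticity yields, by induction on $|\bs|$, the $V$-norm estimates underlying the case $r=1$. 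For $r>1$ I would bootstrap the same recursion in the stronger norm: since $D$ has $C^{r-2,1}$ smoothness, $f \in H^{r-2}(D)$ and $\bar a,\psi_j \in W^{r-1,\infty}(D)$, elliptic regularity controls $\|\cdot\|_{W^r}$ through $\|\Delta\,\cdot\,\|_{H^{r-2}(D)}$, and the hypothesis $\sup_{|\alpha|\le r-1}\big\|\sum_j\rho_{r;j}|D^\alpha\psi_j|\big\|_{L^\infty(D)} < \infty$ provides exactly the product-rule bounds needed to propagate the estimate into $W^r$, uniformly in $\by \in \IIi$.

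It then remains to pass from $u(\by)$ to its Jacobi coefficients. Applying the Rodrigues formula recalled above coordinatewise and integrating by parts $s_j$ times in each active variable, the boundary terms vanish (because $(y^2-1)^{s_j}$ and its lower derivatives vanish at $\pm1$ against the Jacobi weight), which expresses $u_\bs$ as an average of $\partial^\bs u$ weighted by $\prod_j c_{s_j}^{a,b}(1-y_j^2)^{s_j}/(s_j!\,2^{s_j})$; equivalently, the uniform holomorphic bounds give a Cauchy/Plancherel-type estimate on tensor Bernstein polyellipses. Either way one reaches a bound of the schematic form $\|u_\bs\|_{W^r} \lesssim \big(\prod_j c_{s_j}^{a,b}\big)^{-1}\brho_r^{-\bs}\,d_\bs$, so that multiplying by $\beta_{r;\bs}=\brho_r^{\bs}\prod_j c_{s_j}^{a,b}$ cancels the geometric and normalization factors and reduces the claim to $\sum_{\bs\in\FF} d_\bs^2 < \infty$.

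The main obstacle is precisely this final weighted summation, which is where the improvement accounting for the overlap of the supports of the $\psi_j$ enters. A naive single-polyellipse Cauchy estimate is too lossy: the infinite product of the one-dimensional factors $\sum_k (c_k^{a,b})^2\rho_k^{-2s_k}$ diverges, so the radii cannot be frozen. Following \cite{BCM17}, I would instead choose the ellipse radii adaptively in $\bs$ and exploit the strict inequality $\big\|\sum_j\rho_{1;j}|\psi_j|/\bar a\big\|_{L^\infty(D)} < 1$ in its $L^\infty$ form, so that the combinatorial factors $s_j!\,2^{s_j}$ and the polynomial growth of $c_{s_j}^{a,b}$ are absorbed into a convergent series; the case $r>1$ requires in addition that the elliptic-regularity bootstrap be uniform in $\bs$, which is the refinement supplied by \cite{BCDS17}. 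Keeping all constants uniform in $\bs$ while preserving the $L^\infty$-type control of the diffusion perturbation is the delicate point, and it is exactly what the cited references carry out, so in the paper this lemma is invoked by reference rather than reproved.
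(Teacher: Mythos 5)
The paper does not prove this lemma at all: it is imported verbatim, with the sentence ``The following lemma has been proven in \cite{BCM17} for $r=1$ and in \cite{BCDS17} for $r > 1$'' serving as the entire justification. So there is no in-paper argument to compare against, and your closing observation that the lemma ``is invoked by reference rather than reproved'' is accurate. Judged as a reconstruction of the cited proofs, your outline gets several ingredients right: the recursion for $\partial^{\bs}u$ obtained by differentiating the variational identity (using that $a$ is affine in $\by$), the elliptic-regularity bootstrap that converts the hypotheses on $D$, $f$, $\bar a$ and the $\psi_j$ into control of $\|\cdot\|_{W^r}$, and the Rodrigues-formula/integration-by-parts representation of $u_\bs$ in terms of $\partial^{\bs}u$ against the weight $\prod_j(1-y_j^2)^{s_j}$.

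Where your sketch goes wrong is precisely at the step you flag as the main obstacle. The mechanism in \cite{BCM17} is \emph{not} an adaptive choice of Bernstein-ellipse radii: that is the older Cohen--DeVore--Schwab complex-analytic route, which only sees $\big(\|\psi_j\|_{L^\infty}\big)_j$ and therefore cannot produce the support-overlap improvement this lemma encodes. The actual argument is real-variable. One first proves, by induction on the recursion and using the pointwise domination $\sum_j \rho_{1;j}|\psi_j|\le \kappa\,\bar a$ with $\kappa<1$, a bound of the form
\begin{equation} \nonumber
\sum_{\bs\in\FF}\frac{\brho^{2\bs}}{\bs!}\int_D \bar a\,\big|\nabla \partial^{\bs}u(\by)\big|^2\,\rd\bx \;\le\; C
\quad\text{uniformly in } \by\in\IIi ,
\end{equation}
and its $W^r$-analogue for $r>1$. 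One then combines this with the Rodrigues representation and the exact evaluation of the one-dimensional moments $\int_{-1}^{1}(1-y^2)^{s}\,\delta_{a,b}(y)\,\rd y$, which cancels the $s_j!\,2^{s_j}$ factors and leaves precisely the normalization constants $c_{s_j}^{a,b}$ appearing in $\beta_{r;\bs}$; Cauchy--Schwarz in $\by$ against the uniform weighted bound then yields $\sum_{\bs}(\beta_{r;\bs}\|u_\bs\|_{W^r})^2<\infty$. No holomorphic extension to complex polyellipses is needed, and replacing this step by an adaptive-radius Cauchy estimate, as you propose, would not reproduce the stated conclusion. If you intend to sketch the cited proof rather than merely cite it, this is the step to fix.
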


\begin{lemma} \label{bcdmJ}
	Let $0 < q <\infty$, 	$\brho=(\rho_j) _{j \in \NN}$ of  numbers larger than 1 such the sequence $(\rho_j^{-1}) _{j \in \NN}$ belongs to $\ell_q(\NN)$, $(p_\bs(\theta,\lambda))_{\bs \in \FF}$ is a sequence of the form  \eqref{[p_s]} with arbitrary nonnegative $\theta, \lambda$. Then for every $\nu \in \NN_0$, we have
	\begin{equation} \nonumber
	\sum_{\bs \in \FF_\nu} p_\bs(\theta,\lambda) ( \rho^{-\bs})^{q/\nu} 
 \ < \ \infty.
	\end{equation}
\end{lemma}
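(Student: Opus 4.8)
The plan is to exploit the complete product structure of the summand, which here is far cleaner than in Lemma~\ref{lemma[bcdm]} because $\rho^{-\bs} = \prod_{j \in \NN}\rho_j^{-s_j}$ factorizes directly, with no binomial sum to dominate. First I would observe that $\FF_\nu$ is precisely the set of finitely supported sequences whose every entry lies in $\NN_{0,\nu}$, and that both $p_\bs(\theta,\lambda)$ and $(\rho^{-\bs})^{q/\nu}$ are products over $j$ of factors depending only on $s_j$, each equal to $1$ when $s_j=0$. Hence I would write
\[
\sum_{\bs \in \FF_\nu} p_\bs(\theta,\lambda)(\rho^{-\bs})^{q/\nu} = \prod_{j \in \NN} B_j, \qquad B_j := \sum_{n \in \NN_{0,\nu}} (1+\lambda n)^\theta \rho_j^{-nq/\nu},
\]
which is the standard factorization of a sum over finitely supported multi-indices into an infinite product, valid because the $n=0$ term of each $B_j$ equals $1$.

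The key estimate is then to bound each $B_j$. Since $\NN_{0,\nu} = \{0\}\cup\{\nu,\nu+1,\dots\}$, the smallest positive contribution occurs at $n=\nu$, producing the factor $\rho_j^{-\nu q/\nu}=\rho_j^{-q}$; factoring it out gives
\[
B_j - 1 = \rho_j^{-q}\sum_{n \geq \nu}(1+\lambda n)^\theta \rho_j^{-(n-\nu)q/\nu}.
\]
Because $(\rho_j^{-1})_{j\in\NN}\in\ell_q(\NN)$ we have $\rho_j\to\infty$, so for any fixed $\rho_0>1$ there is $j^*$ with $\rho_j\geq\rho_0$ for all $j\geq j^*$. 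For such $j$ the remaining series is dominated termwise by $\sum_{n\geq\nu}(1+\lambda n)^\theta\rho_0^{-(n-\nu)q/\nu}=:C_0<\infty$, finiteness following since the geometric decay $\rho_0^{-(n-\nu)q/\nu}$ beats the polynomial growth $(1+\lambda n)^\theta$. This yields the uniform bound $B_j-1\le C_0\rho_j^{-q}$ for $j\geq j^*$.

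Finally I would split the product and conclude: the finite product $\prod_{j<j^*}B_j$ is finite since each $B_j<\infty$ (again by geometric domination, using $\rho_j>1$), while
\[
\prod_{j\geq j^*} B_j \le \prod_{j\geq j^*}\bigl(1 + C_0\rho_j^{-q}\bigr) \le \exp\Bigl(C_0\sum_{j\geq j^*}\rho_j^{-q}\Bigr) < \infty,
\]
the last series being finite by the $\ell_q$-summability hypothesis. Multiplying the two factors gives $\prod_{j\in\NN}B_j<\infty$, which is the claim.

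The only delicate point is the uniform-in-$j$ control of the tail series, i.e.\ ensuring the polynomial weight $(1+\lambda n)^\theta$ does not spoil the geometric summability; this is handled by passing to the cofinitely many indices with $\rho_j\geq\rho_0>1$, where a single $j$-independent comparison constant $C_0$ suffices. Note that, in contrast to Lemma~\ref{lemma[bcdm]}, no lower bound on the index-budget parameter beyond $\nu\in\NN$ is needed, since the leading index $n=\nu$ already supplies exactly the critical exponent $\rho_j^{-q}$ matched to the $\ell_q$-hypothesis.
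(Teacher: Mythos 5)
Your proof is correct and follows essentially the same route as the paper's: factorize the sum over $\FF_\nu$ into the infinite product $\prod_j B_j$, bound each factor by $1+C\rho_j^{-q}$ using that the $n=\nu$ term contributes exactly $\rho_j^{-q}$ and the tail is geometrically dominated, then conclude via $\prod_j(1+C\rho_j^{-q})\le\exp(C\sum_j\rho_j^{-q})<\infty$. The only cosmetic difference is that the paper extracts a single uniform constant from $\min_j\rho_j>1$ (which holds since $\rho_j>1$ for all $j$ and $\rho_j\to\infty$), whereas you split off the finitely many indices with $\rho_j<\rho_0$; the two devices are interchangeable.
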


\begin{proof}
	We have
	\begin{equation} \nonumber
	\sum_{\bs \in \FF_\nu} p_\bs(\theta,\lambda) ( \rho^{-\bs})^{q/\nu} =
	\prod_{j \in \NN}\ \sum_{s_j \in \NN_{0,\nu}} \ \rho_j^{-s_j q/\nu} (1 + \lambda s_j)^\theta =:  \prod_{j \in \NN} A_j.
	\end{equation}
	Since $\brho=(\rho_j) _{j \in \NN}$ of  numbers larger than one, and  such the sequence $(\rho_j^{-1}) _{j \in \NN}$ belongs to $\ell_q(\NN)$, we have $\min_{j \in \NN} \rho_j > 1$. Hence, there exists a constant $C$ independent of $j$ such that
	\begin{equation} \nonumber
	A_j =
	1 + \sum_{k = \nu}^\infty \ \rho_j^{-kq/\nu} (1 + \lambda k)^\theta \le 1 + C  \rho_j^{-q},
	\end{equation}
	and consequently,	
	\begin{equation} \nonumber
	\sum_{\bs \in \FF_\nu} p_\bs(\theta,\lambda) ( \rho^{-\bs})^{q/\nu}    \le
	\prod_{j \in \NN} (1 + C  \rho_j^{-q}) \le
	\exp\Big(\sum_{j \in \NN} C  \rho_j^{-q}\Big) < \infty.
	\end{equation}
	\hfill
\end{proof}

We assume that there holds  the following approximation property for $V$ and $W^r$ with $r > 1$.  

\noindent
{\bf Assumption III}  \,  
There are a sequence $(V_n)_{n \in \NN_0}$ of subspaces $V_n \subset V $ of dimension $\le n$, and a sequence $(P_n)_{n \in \NN_0}$ of linear operators from $V$ into $V_n$, 
and a number $\alpha>0$ such that
\begin{equation} \label{spatialappnJ}
\|P_n(v)\|_V \leq 
C\|v\|_V , \quad
\|v-P_n(v)\|_V \leq 
Cn^{-\alpha} \|v\|_{W^r}, \quad \forall n \in \NN_0, \quad \forall v \in W^r.
\end{equation}

In this section,  we make use the abbreviations: $\Ll_p(V) := L_p(\IIi,V,\nu_{a,b})$ and $\Ll_p(W^r) := L_p(\IIi,W^r,\nu_{a,b})$ and assume that $r > 1$. From Lemmata \ref{theorem[summability]J} and \ref{bcdmJ} we can prove the following results on non-adaptive fully and non-fully  discrete Jacobi GPC expansion and polynomial interpolation approximations and integration for the affine case.

\begin{theorem}\label{thm[L_2-approx]pdeJ}
	Let $0 < p \le 2$. Let Assumption III hold. Let the assumptions of Lemma~\ref{theorem[summability]J} hold  for the spaces $W^1=V$ and $W^r$
	with some $0<q_1\leq q_r <\infty$. For $\xi>1$, let $G(\xi)$ be the set defined by \eqref{G(xi)G} for $\sigma_{1;\bs}:= \beta_{1;\bs}$ and $\sigma_{2;\bs}:= \beta_{r;\bs}$ as in \eqref{beta_r,s}. Then there exists a constant $C$ such that
	for each $n \in \NN$, there exists a number $\xi_n$  such that   $\dim(\Vv(G(\xi_n)) \le n$ and
	\begin{equation} 	\label{L_2-rate-pdeJ}
	\|u-\Ss_{G(\xi_n)}u\|_{\Ll_p(V)} \leq C \times
	\begin{cases}
		n^{-\alpha} &\text{if } \alpha \leq  1/q_2,
		\\
		n^{-\beta}  &\text{if } \alpha >  1/q_2.
	\end{cases} 
	\end{equation}
	The rate $\alpha$ corresponds to the spatial approximation of a single function in $W^r$ as given by \eqref{spatialappnJ}, and
	the rate $\beta$ is given by \eqref{[beta]2}.
\end{theorem}

The rate $\min(\alpha,\beta)$ in \eqref{L_2-rate-pdeJ} is the same rate of fully discrete best adaptive  $n$-term approximation in $\Ll_2(V)$ based on $\ell_{p_1}$-summability of $(\|u_\bs\|_V)_{\bs \in \FF}$ and $\ell_{p_r}$-summability of $(\|u_\bs\|_{W^r})_{\bs \in \FF}$ proven in \cite{BCDS17}, where  $1/p_1 = 1/q_1 + 1/2$ and $1/p_r = 1/q_r + 1/2$. This rate can be achieved by linear fully discrete non-adaptive approximation when $(\|u_\bs\|_V)_{\bs \in \FF}$ and  $(\|u_\bs\|_{W^r})_{\bs \in \FF}$ have $\ell_{p_1}$-summable and  $\ell_{p_r}$-summable majorant sequences, respectively \cite{ZDS18}.

\begin{theorem}\label{thm[V_p-approx]pdeJ}
	Let $1 \le p \le \infty$. 
	Let Assumption III hold. Let the assumptions of Lemma~\ref{theorem[summability]J} hold  for the spaces $W^1=V$ and $W^r$
	with some $0<q_1\leq q_r <\infty$ with $q_1 <2$. For $\xi>1$, let $G(\xi)$ be the set defined by  in \eqref{G_(xi)} for $\sigma_{1;\bs}:= \beta_{1;\bs}$ and $\sigma_{2;\bs}:= \beta_{r;\bs}$ as in \eqref{beta_r,s}. 
	Then there exists a constant $C$ such that for each $n \in \NN$, there exists a number $\xi_n$  such that  $\dim(\Vv(G(\xi_n)) \le n$ and
	\begin{equation} \label{u-S_Gu-pdeJ}
	\|u-\Ss_{G(\xi_n)}u\|_{\Ll_p(V)} \leq C \times
	\begin{cases}
		n^{-\alpha} &\text{if } \alpha \leq  1/q_2 - 1/2,
		\\
		n^{-\beta} &\text{if } \alpha >  1/q_2 - 1/2.
	\end{cases} 
	\end{equation}
	The rate $\alpha$ corresponds to the spatial approximation of a single function in $W^r$ as given by \eqref{spatialappnJ}.
	The rate $\beta$ is given by \eqref{[beta]1}
\end{theorem}

For polynomial interpolation approximation and integration, we keep all definitions and notations in Section~\ref{interpolation} with a proper modification for the affine case. For example,  for univariate interpolation and integration we take  a sequence of points
$Y_m = (y_{m;k})_{k=0}^m$  in $\II$ such that 
\begin{equation} \nonumber
- \infty < y_{m;0} < \cdots < y_{m;m-1} < y_{m;m} < + \infty; \quad y_{0;0} = 0.
\end{equation}
Sequences of points  
$Y_m = (y_{m;k})_{k=0}^m$ satisfying  the inequality \eqref{ineq[lambda_m]}, are  the symmetric sequences of the Chebyshev points, the symmetric sequences of the Gauss-Lobatto (Clenshaw-Curtis) points and the nested sequence of the $\Re$-Leja points, see  \cite{CD15} for details. 

\begin{theorem}\label{thm[coll-approx]pdeJ}
		Let $1 \le p \le \infty$. 
	Let  Assumption III hold.  Let the assumptions of Lemma~\ref{theorem[summability]J} hold  for the spaces $W^1=V$ and $W^r$
	with some $0<q_1\leq q_r <\infty$ with $q_1 <2$. Assume that
	$(Y_m)_{m \in \NN_0}$ is a sequence satisfying the condition \eqref{ineq[lambda_m]} for some positive numbers $\tau$ and $C$. For $\xi>1$, let $G(\xi)$ be the set defined by \eqref{G_(xi)} for $\sigma_{1;\bs}:= \beta_{1;\bs}$ and $\sigma_{2;\bs}:= \beta_{r;\bs}$ as in \eqref{beta_r,s}. 
	 	Then there exists a constant $C$ such that for each $n \in \NN$, there exists a number $\xi_n$  such that  
for the operator $\Ii_{G(\xi_n)}: \Ll_2(W^r) \to \Vv(G(\xi_n)$, we have that $\dim \Vv(G(\xi_n)) \le  n$ and 
	\begin{equation} \label{u-I_Gu, p le 2-pdeJ}
	\|u -\Ii_{G(\xi_n)}u\|_{\Ll_p(V)} \leq C \times
	\begin{cases}
		n^{-\alpha} &\text{if } \alpha \leq  1/q_2 - 1/2,
		\\
		n^{-\beta} \log n  &\text{if } \alpha >  1/q_2 - 1/2.
	\end{cases} 
	\end{equation}
	The rate $\alpha$ corresponds to the spatial approximation of a single function in $W^r$ as given by \eqref{spatialappnJ}.
	The rate $\beta$ is given by \eqref{[beta]1}.
\end{theorem}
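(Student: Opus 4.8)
The plan is to obtain Theorem~\ref{thm[coll-approx]pdeJ} as the affine counterpart of Theorem~\ref{thm[coll-approx]}, applied to the solution map $\by \mapsto u(\by)$ with the identifications $X^1 = V$, $X^2 = W^r$, the weight sequences $\sigma_{1;\bs} := \beta_{1;\bs}$ and $\sigma_{2;\bs} := \beta_{r;\bs}$ of \eqref{beta_r,s}, and $q_2 := q_r$. All the constructions of Section~\ref{interpolation} carry over verbatim to $\IIi$ equipped with $\nu_{a,b}$: the univariate operators $I_m$ and their differences $\Delta^{{\rm I}}_m$ on $\II$, the tensorized $\Delta^{{\rm I}}_\bs$, and the sparse operator $\Ii_{G(\xi)} = \sum_{(k,\bs)\in G(\xi)}\delta_k\Delta^{{\rm I}}_\bs$. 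Since $\nu_{a,b}$ is a probability measure, $\|\cdot\|_{\Ll_p(V)}\le\|\cdot\|_{\Ll_\infty(V)}$ for every $p<\infty$, so it suffices to establish the bound for $p=\infty$; the reduction to $p=2$ used in the Gaussian proof is here replaced by a reduction to $p=\infty$. The unconditional convergence of the Jacobi gpc series and of the associated $\delta_k$-series, needed to justify the rearrangement $\Ii_{G(\xi)}u = \Ss_{G(\xi)}u + (\text{remainder})$, follows from the affine analogues of Lemmata~\ref{lemma[AbsConv]} and \ref{lemma[AbsConv]2}.

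First I would verify the three structural hypotheses of the abstract theorem. The spatial approximation property is immediate, since Assumption III is precisely Assumption II for the pair $(V,W^r)$, so \eqref{spatialappnJ} supplies the operators $P_n$ and the scale differences $\delta_k$ with the required boundedness and decay. The weighted $\ell_2$-summability $\sum_{\bs\in\FF}(\beta_{r;\bs}\|u_\bs\|_{W^r})^2<\infty$ is exactly \eqref{beta_r,s} of Lemma~\ref{theorem[summability]J}, invoked for $r=1$ and for the chosen $r$. The polynomial-weighted summability $(p_\bs(2\theta,\lambda)\beta_{r;\bs}^{-1})_{\bs\in\FF}\in\ell_{q_r}(\FF)$ will follow from Lemma~\ref{bcdmJ} with $\nu=1$ and $q=q_r$: writing $\beta_{r;\bs}^{-q_r}=\brho_r^{-q_r\bs}\prod_j(c_{s_j}^{a,b})^{-q_r}$ and noting that for the Legendre and Chebyshev families $c_{s_j}^{a,b}\ge 1$, so that this extra product is at most $1$ (in general it is polynomially bounded and absorbed into the weight $p_\bs$), the remaining sum $\sum_{\bs}p_\bs(2\theta q_r,\lambda)\brho_r^{-q_r\bs}$ is finite by Lemma~\ref{bcdmJ} because $(\rho_{r;j}^{-1})\in\ell_{q_r}(\NN)$. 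As in the proof of Theorem~\ref{thm[L_2-approx]pde}, multiplying each $\brho_r$ by a suitable positive constant makes both $\rho_{r;j}>1$ (so that Lemma~\ref{bcdmJ} applies) and $\beta_{r;\bs}>1$; moreover $\beta_{r;\bs}$ is manifestly increasing in $\bs$, which is what guarantees that the level sets $\Lambda_k(\xi)$ are downward closed and nested in the inverse order — the property used to exploit $I_\Lambda J_\bs = J_\bs$ on downward closed $\Lambda$.

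The main obstacle, and the only genuinely new input, is to supply the Jacobi analogues of the univariate estimates feeding the chain \eqref{[|Delta_{bs'}|<]4}, now taken in the unweighted $L_\infty(\II)$ norm rather than with the $\sqrt{g}$ weight. Concretely I would show that $\|J_m\|_{L_\infty(\II)}$ grows at most polynomially in $m$ (replacing Lemma~\ref{lemma[H_bs|_infty]}) and that $\|\Delta^{{\rm I}}_m(v)\|_{L_\infty(\II)}\le (C_\varepsilon m+1)^{\tau+\varepsilon}\|v\|_{L_\infty(\II)}$ directly from the Lebesgue-constant bound \eqref{ineq[lambda_m]} on $\II$ (replacing Lemma~\ref{lemma[Delta_{bs}]}); on the compact interval no Nikol'skii-type inequality is needed. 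Tensorizing over $j$ then yields $\|\Delta^{{\rm I}}_{\bs'}(J_\bs)\|_{L_\infty(\IIi)}\le p_\bs(\theta,\lambda)$ with $\theta,\lambda$ as in \eqref{theta,lambda2} suitably adjusted, whence $\|I_{\Lambda\cap R_\bs}J_\bs\|_{L_\infty(\IIi)}\le p_\bs(\theta,\lambda)$ as in \eqref{I_Lambda_k}. Once this estimate is in hand, the two cases $\alpha\le 1/q_r-1/2$ and $\alpha> 1/q_r-1/2$ are treated exactly as in Theorem~\ref{thm[coll-approx]}: one splits off $\Ss_{G(\xi)}u$, bounds $\|u-\Ss_{G(\xi)}u\|_{\Ll_\infty(V)}$ and the remainder by the $\ell_{q_r}$-summable weighted sums through Hölder's inequality, estimates $\dim\Vv(G(\xi))$, and selects $\xi_n$ so that $|G(\xi_n)|\le n$, yielding \eqref{u-I_Gu, p le 2-pdeJ} with rate $\min(\alpha,\beta)$ and $\beta$ as in \eqref{[beta]1}.
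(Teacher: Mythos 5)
Your proposal is correct and follows essentially the same route as the paper, which itself only indicates that Theorem~\ref{thm[coll-approx]pdeJ} is derived from the general interpolation machinery of Section~\ref{interpolation} together with Lemmata~\ref{theorem[summability]J} and \ref{bcdmJ}, with the identifications $X^1=V$, $X^2=W^r$, $\sigma_{1;\bs}=\beta_{1;\bs}$, $\sigma_{2;\bs}=\beta_{r;\bs}$ and the obvious replacements of the Gaussian-weighted univariate estimates by their unweighted analogues on $\II$. Your additional observations (reduction to $p=\infty$ instead of $p=2$, absorption of the $c_{s_j}^{a,b}$ factors into $p_\bs(\theta,\lambda)$, and the fact that no Nikol'skii-type inequality is needed on the compact interval) are exactly the adjustments the paper leaves implicit.
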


The rates in \eqref{L_2-rate-pdeJ}--\eqref{u-I_Gu, p le 2-pdeJ} for some non-adaptive approximations have been proven in the case when  $(\|u_\bs\|_V)_{\bs \in \FF}$ and $(\|u_\bs\|_{W^r})_{\bs \in \FF}$ have $\ell_{p_1}$-summable  and  $\ell_{p_r}$-summable majorant sequences, respectively, which are derived from the analyticity of the solution $u$, where  $1/p_1 = 1/q_1 + 1/2$ and $1/p_r = 1/q_r + 1/2$, see \cite{ZDS18}.

\begin{theorem}\label{thm[quadrature]pdeJ}
	Let  Assumption III hold.  Let $a=b$ for the Jacobi probability measure $\nu_{a,b}(\by)$, and the assumptions of Lemma~\ref{theorem[summability]J} hold  for the spaces $W^1=V$ and $W^r$
	with some $0<q_1\leq q_r <\infty$ with $q_1 < 4$. 
	Assume that
	$(Y_m)_{m \in \NN_0}$ is a sequence satisfying the condition \eqref{ineq[lambda_m]} for some positive numbers $\tau$ and $C$, and such that $Y_m$ is symmetric for every $m \in \NN_0$. For $\xi>1$, let $G_\rev^*(\xi)$ be the set defined  by \eqref{G_rev(xi)2} for $q_2=q_r$ and for $\sigma_{1;\bs}:= \beta_{1;\bs}$ and $\sigma_{2;\bs}:= \beta_{r;\bs}$ as in \eqref{beta_r,s}. 	Then for  the quadrature operator $\Qq_{G_\rev^*(\xi)}$ generated by the interpolation operator $\Ii^*_{G_\rev^*(\xi)}: \Ll_2(W^r) \to \Vv(G_\rev^*(\xi))$, we have the following.
	\begin{itemize}
		\item[{\rm (i)}]
		There exists a constant $C$ such that for each $n \in \NN$, there exists a number $\xi_n$  such that  $\dim\Vv(G_\rev^*(\xi_n))\le n$ and 
\begin{equation} \label{u-Q_Gu-quadratureJ}
\left\|\int_{\IIi}u(\by)\, \rd \nu_{a,b}(\by) - \Qq_{G_\rev^*(\xi_n)}u\right\|_V \leq C \times
	\begin{cases}
		n^{-\alpha} &\text{if } \alpha \leq  2/q_r - 1/2,
		\\
		n^{-\beta} \log n  &\text{if } \alpha >  2/q_r - 1/2.
	\end{cases} 
\end{equation}			
		\item[{\rm (ii)}] 
Let $\phi \in V'$  be a bounded linear functional on $V$. 	Then there exists a constant $C$ such that for  each $n \in \NN$, there exists a number $\xi_n$  such that  $\dim\Vv(G_\rev^*(\xi_n))\le n$ and 
\begin{equation} \label{u-Q_Gu_phiJ}
\left|\int_{\IIi} \langle \phi,  u(\by) \rangle\, \rd \nu_{a,b}(\by) - \Qq_{G_\rev^*(\xi_n)} \langle \phi,  u \rangle\right| \leq C \|\phi\|_{V'} \times
	\begin{cases}
		n^{-\alpha} &\text{if } \alpha \leq  2/q_r - 1/2,
		\\
		n^{-\beta} \log n &\text{if } \alpha >  2/q_r - 1/2.
	\end{cases} 
\end{equation}
	\end{itemize}	
	The rate $\alpha$ corresponds to the spatial approximation of a single function in $W^r$ as given by
	\eqref{spatialappnJ}. The rate $\beta$   is  given by 
	\begin{equation} 	\nonumber
	\beta := \left(\frac 2 {q_1} - \frac{1}{2}\right)\frac{\alpha}{\alpha + \delta}, \quad 
	\delta := \frac 2 {q_1} - \frac 2 {q_r}.
	\end{equation}	 
\end{theorem}

The rate in \eqref{u-Q_Gu-quadratureJ}--\eqref{u-Q_Gu_phiJ} can be improved as 
$\min(\alpha,  \frac 2 {q_1} \frac{\alpha}{\alpha + \delta})$ if   $(\|u_\bs\|_V)_{\bs \in \FF}$ and $(\|u_\bs\|_{W^r})_{\bs \in \FF}$ have $\ell_{p_1}$- and  $\ell_{p_r}$-summable majorant sequences, respectively, where  $1/p_1 = 1/q_1 + 1/2$ and $1/p_r = 1/q_r + 1/2$, see \cite{ZDS18}.

\begin{corollary}\label{corollary[quadrature]pdeJ}
Let $a=b$ for the Jacobi probability measure $\nu_{a,b}(\by)$, and the assumptions of Lemma~\ref{theorem[summability]J}  hold for the spaces $W^1=V$ with some $0 < q_1 = q  < 4$.  Assume that
$(Y_m)_{m \in \NN_0}$ is a sequence satisfying the condition \eqref{ineq[lambda_m]} for some positive numbers $\tau$ and $C$, and such that $Y_m$ is symmetric for every $m \in \NN_0$.
 For $\xi>1$, let $\Lambda_\rev^*(\xi)$ be the set defined by \eqref{Lambda_rev(xi)2} for $\sigma_\bs:= \beta_{1;\bs}$ as in \eqref{beta_r,s}. Then we have the following.
			\begin{itemize}
		\item[{\rm (i)}]	
	There exists a constant $C$ such that for each $n \in \NN$, there exists a number $\xi_n$  such that  $|\Gamma(\Lambda_\rev^*(\xi_n))| \le n$ and 
\begin{equation} \label{u-Q_Lambdau_pdeJ}
\left\|\int_{\IIi}u(\by)\, \rd \nu_{a,b}(\by) - Q_{\Lambda_\rev^*(\xi_n)}u\right\|_V
\ \le \
Cn^{-(2/q - 1/2)}.
\end{equation}
	\item[{\rm (ii)}] 
		Let  $\phi \in V'$ be a bounded linear functional on $V$. 
		Then there exists a constant $C$ such that for each  $n \in \NN$, there exists a number $\xi_n$  such that  $|\Gamma(\Lambda_\rev^*(\xi_n))| \le n$ and 
	\begin{equation} \label{u-Q_Lambdau_phi-pdeJ}
	\left|\int_{\IIi} \langle \phi, u (\by) \rangle\, \, \rd \nu_{a,b}(\by) - Q_{\Lambda_\rev^*(\xi_n)} \langle \phi, u \rangle \right|
	\ \le \
	C\|\phi\|_{V'} n^{-(2/q - 1/2)}.
	\end{equation}	
	\end{itemize}	
\end{corollary}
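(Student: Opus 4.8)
The plan is to derive this as the affine, non-fully discrete counterpart of Corollary~\ref{corollary[quadrature]pde}, reducing the quadrature error to an $\Ll_1(V)$ interpolation error on the even index set $\FF_\rev$ and then importing the rate from the Jacobi analogue of Corollary~\ref{corollary[coll-approx]ev}. First I would record the two structural consequences of the symmetry hypotheses. Since $a=b$, the weight $\delta_{a,b}$ is even, so $\int_\II v\,\rd\nu_{a,b}=0$ for every odd $v$; tensorising this yields the affine form of \eqref{int v = int v_ev}, namely $\int_{\IIi}u\,\rd\nu_{a,b}=\int_{\IIi}u_\rev\,\rd\nu_{a,b}$ with $u_\rev:=\sum_{\bs\in\FF_\rev}u_\bs J_\bs$. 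Since each $Y_m$ is symmetric, the univariate rule annihilates odd polynomials, so $\Delta^{{\rm Q}}_{\bs'}J_\bs=0$ for $\bs\notin\FF_\rev$ (the analogue of \eqref{Delta^QH_s=0}) and hence $Q_{\Lambda_\rev(\xi)}u=Q_{\Lambda_\rev(\xi)}u_\rev$. Exactly as in the proof of Theorem~\ref{thm[quadrature]}, these combine to give
\[\int_{\IIi}u\,\rd\nu_{a,b}-Q_{\Lambda_\rev(\xi_n)}u=\int_{\IIi}\bigl(u_\rev-I_{\Lambda_\rev(\xi_n)}u_\rev\bigr)\,\rd\nu_{a,b},\]
whence $\bigl\|\int_{\IIi}u\,\rd\nu_{a,b}-Q_{\Lambda_\rev(\xi_n)}u\bigr\|_V\le\|u_\rev-I_{\Lambda_\rev(\xi_n)}u_\rev\|_{\Ll_1(V)}$, and part~(i) reduces to an $\Ll_1(V)$ interpolation bound for $u_\rev$.

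For that bound I would apply the affine, even-index interpolation estimate (the Jacobi analogue of Corollary~\ref{corollary[coll-approx]ev}) with weight $\sigma_\bs=\beta_{1;\bs}$ of \eqref{beta_r,s}. The source of the improved exponent is that $\FF_\rev\subset\FF_2$, which upgrades the summability exponent from $q$ to $q/2$: with $\theta,\lambda$ as in \eqref{theta,lambda2}, and using $c_{s_j}^{a,b}\ge1$ so that $\beta_{1;\bs}^{-1}\le\brho_1^{-\bs}$, Lemma~\ref{bcdmJ} with $\nu=2$ gives
\[\sum_{\bs\in\FF_\rev}\bigl(p_\bs(2\theta,\max(2,\lambda))\,\beta_{1;\bs}^{-1}\bigr)^{q/2}\le\sum_{\bs\in\FF_2}p_\bs(\theta q,\max(2,\lambda))\,(\brho_1^{-\bs})^{q/2}<\infty,\]
the finiteness holding because $(\rho_{1;j}^{-1})_{j\in\NN}\in\ell_q(\NN)$ by Lemma~\ref{theorem[summability]J} (after a harmless rescaling of $\brho_1$ arranging $\rho_{1;j}>1$). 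Thus the weighted sequence lies in $\ell_{q/2}(\FF_\rev)$, and the hypothesis $q<4$ is precisely what makes $q/2<2$, so the interpolation corollary is applicable with summability exponent $q/2$ in place of $q$: it furnishes, for each $n$, a threshold $\xi_n$ with $|\Gamma(\Lambda_\rev(\xi_n))|\le n$ and $\|u_\rev-I_{\Lambda_\rev(\xi_n)}u_\rev\|_{\Ll_1(V)}\le Cn^{-(1/(q/2)-1/2)}=Cn^{-(2/q-1/2)}$, which with the reduction above proves \eqref{u-Q_Lambdau_pdeJ}. Part~(ii) is identical: from $\langle\phi,Q_{\Lambda_\rev(\xi_n)}u\rangle=Q_{\Lambda_\rev(\xi_n)}\langle\phi,u\rangle$ and $|\langle\phi,w\rangle|\le\|\phi\|_{V'}\|w\|_V$ one gets $\bigl|\int_{\IIi}\langle\phi,u\rangle\,\rd\nu_{a,b}-Q_{\Lambda_\rev(\xi_n)}\langle\phi,u\rangle\bigr|\le\|\phi\|_{V'}\,\|u_\rev-I_{\Lambda_\rev(\xi_n)}u_\rev\|_{\Ll_1(V)}$, giving \eqref{u-Q_Lambdau_phi-pdeJ}.

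The main obstacle is not this reduction, which is formal, but first securing the Jacobi analogue of Corollary~\ref{corollary[coll-approx]ev}, i.e.\ transplanting the auxiliary machinery of Section~\ref{interpolation} from $(\gamma,H_\bs)$ to $(\nu_{a,b},J_\bs)$. The estimate that must be reproved is an $L_2$-bound of the shape $\|\Delta^{{\rm I}}_{\bs'}J_\bs\|_{L_2(\IIi,\nu_{a,b})}\le p_\bs(\theta,\lambda)$ replacing the Hermite-specific Lemmata~\ref{lemma[N-ineq]}--\ref{lemma[Delta_{bs}]}; on the compact interval $\II$ this is in fact easier than over $\RR$, following from the uniform boundedness of the normalised Jacobi polynomials together with the Lebesgue-constant hypothesis \eqref{ineq[lambda_m]}, the resulting polynomial weights again being absorbed into the factor $p_\bs(2\theta,\lambda)$. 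The only ingredient genuinely special to this corollary is the evenness forced by $a=b$, which simultaneously annihilates the odd integrals and legitimises the restriction to $\FF_\rev\subset\FF_2$ responsible for the improved exponent $2/q-1/2$.
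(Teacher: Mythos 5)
Your proposal is correct and follows essentially the same route the paper intends: the paper gives no separate argument for this corollary, deriving it ``in the same way'' as Corollary~\ref{corollary[quadrature]pde}, namely by the symmetry/evenness reduction of the quadrature error to $\|u_\rev-I_{\Lambda_\rev(\xi_n)}u_\rev\|_{\Ll_1(V)}$ and then the observation $\FF_\rev\subset\FF_2$, which via the $\nu=2$ case of Lemma~\ref{bcdmJ} halves the summability exponent to $q/2<2$ and yields the rate $1/(q/2)-1/2=2/q-1/2$. Your identification of $q<4$ as exactly the condition making the even-index interpolation corollary applicable is the right reading.

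Two local misstatements are worth repairing, though neither affects the architecture. First, $c_k^{a,b}\ge 1$ is false in general (e.g.\ for $a=b=5$ one has $c_1^{a,b}<1$ and $c_2^{a,b}<1$), so $\beta_{1;\bs}^{-1}\le\brho_1^{-\bs}$ does not hold verbatim; since $c_k^{a,b}\to\infty$ one instead has $c_*:=\inf_k c_k^{a,b}>0$ and $\prod_j(c_{s_j}^{a,b})^{-1}\le c_*^{-|\supp\bs|}\le\prod_j(\min(c_*,1))^{-s_j}$, so the factor is absorbed by applying Lemma~\ref{bcdmJ} to the rescaled sequence $(\min(c_*,1)\rho_{1;j})_j$, whose inverse is still $\ell_q$-summable and exceeds $1$ for all but finitely many $j$ (finitely many exceptional factors $B_j$ remain finite). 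Second, the normalised Jacobi polynomials are not uniformly bounded on $\II$: $\|J_k\|_{L_\infty(\II)}=|J_k(1)|$ grows polynomially in $k$ (like $k^{a+1/2}$ for $a\ge -1/2$). This is still harmless, as you note, because the polynomial growth is absorbed into $p_\bs(\theta,\lambda)$ exactly as the Nikol'skii factor is in the Hermite case, but the justification should invoke polynomial rather than uniform bounds.
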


 The rate $2/q - 1/2$ in \eqref{u-Q_Lambdau_pdeJ} in Corollary~\ref{corollary[quadrature]pdeJ} improves the rate $2/q- 1/2- \varepsilon$ with arbitrary $\varepsilon >0$, which has been obtained in \cite[Corollary 3.13]{ZS17}. 
 
 We can also prove counterparts of Theorem \ref{thm[quadrature]pde2} and Corollary \ref{corollary[quadrature]pde2} for the parameterized diffusion elliptic equation \eqref{p-ellip} with the affine inputs \eqref{affine}.

\medskip
\noindent
{\bf Acknowledgments.}  This work is funded by Vietnam National Foundation for Science and Technology Development (NAFOSTED) under  Grant No. 102.01-2020.03.  It was partially supported by a grant from the Simon Foundation and  EPSRC Grant Number EP/R014604/1. The author would like to thank the Isaac Newton Institute for Mathematical Sciences for partial support and hospitality during the program Approximation, sampling and compression in data science when work on this paper was partially undertaken.   He expresses special thanks to Christoph Schwab, Nguyen Van Kien and Jacob Zech for valuable remarks, comments and suggestions.


\begin{thebibliography}{99}
	
\bibitem{BCDS17}
M. Bachmayr, A. Cohen, D. D\~ung and C. Schwab,
Fully discrete approximation of parametric and stochastic elliptic PDEs, SIAM J. Numer. Anal. {\bf 55}(2017), 2151-2186.

\bibitem{BCM17}
M. Bachmayr, A. Cohen, and G. Migliorati, Sparse polynomial approximation of
parametric elliptic PDEs. Part I: affine coefficients, ESAIM Math. Model. Numer. Anal. {\bf 51}(2017), 321-339.

 \bibitem{BCDM17}
 M. Bachmayr, A. Cohen, R. DeVore and G. Migliorati, Sparse polynomial approximation of
 parametric elliptic PDEs. Part II: lognormal coefficients, ESAIM Math. Model. Numer. Anal. {\bf 51}(2017), 341-363.

\bibitem{Ch18}
P. Chen, Sparse quadrature for high-dimensional integration with Gaussian  measure.  ESAIM Math. Model. Numer. Anal. {\bf 52}(2018), 631-657.


\bibitem{CCS15}
A. Chkifa, A. Cohen and C. Schwab, Breaking the curse of dimensionality in sparse polynomial
approximation of parametric PDEs, Journal de Math\'ematiques Pures et Appliqu\'ees {\bf 103}(2)(2015), 400-428.

\bibitem{CCDS13}
A. Chkifa, A. Cohen, R. DeVore, Ch. Schwab, Sparse adaptive Taylor approximation algorithms for
parametric and stochastic elliptic PDEs, ESAIM Math. Model. Numer. Anal. {\bf 47}(1)(2013), 253-280.

\bibitem{CCS13}
A. Chkifa, A. Cohen, and C. Schwab. High-dimensional adaptive sparse polynomial interpolation and
applications to parametric PDEs. Found. Comp. Math {\bf 14}(2014), 601-633.

\bibitem{Cia78} P. Ciarlet,
{\it The Finite Element Method for Elliptic Problems}, North Holland Publ., 1978.

\bibitem{CD15}
A. \ Cohen and R. \ DeVore.
 Approximation of high-dimensional parametric PDEs. 
  Acta Numerica {\bf 24}(2015), 1-159.

\bibitem{CDS10}
A. \ Cohen, R. \ DeVore, Ch. \ Schwab. 
Convergence rates of best $N$-term Galerkin approximations for a class of elliptic sPDEs. 
Found. Comput. Math. {\bf 10}(6)(2010), 615-646.

\bibitem{CDS10}
A. Cohen, R. DeVore, Ch. Schwab, Convergence rates of best $N$-term Galerkin approximations for a class of elliptic sPDEs, Found. Comput. Math. {\bf 10}(6)(2010), 615-646.

\bibitem{CDS11}
A. Cohen, R. DeVore, Ch. Schwab, Analytic regularity and polynomial approximation of parametric and stochastic PDEs, Anal. Appl. {\bf 9}(2011), 11-47.

\bibitem{DILP18}
J. Dick, Ch. Irrgeher, G. Leobacher
and F. Pillichshammer,
On the optimal order of integration in Hermite spaces
with finite smoothness, SIAM Journal on
Numerical Analysis, {\bf 56}(2)(2018), 684-707.

\bibitem{DKS13}
J. Dick, F. Kuo, and I. Sloan. High-dimensional integration: the quasi-Monte
Carlo way. Acta Numerica, {\bf 22}(2013),133-288.

\bibitem{Di15}
D. D\~ung,
Linear collective collocation and Galerkin approximations for parametric and stochastic elliptic PDEs (2015),
 arXiv:1511.03377v5 [math.NA].
 
 \bibitem{Di18a}
 D. D\~ung, Galerkin approximation for parametric and stochastic elliptic PDEs, Bulletin of L.N. Gumilyov Euroasian National University, Mathematics-Computer Science-Mechanics Series, No 1(122)(2018), 76-89.

 \bibitem{Di18b}
D. D\~ung, Linear collocation approximation for parametric and stochastic elliptic PDEs,  Mat. Sb. {\bf 210}(2019), 103-227.

\bibitem{EMOT55}
A. Erd\'elyi, W. Magnus, F. Oberhettinger, F. G. Tricomi,
Higher transcendental functions, Vol. II, McGraw-Hill, 1955.

\bibitem{EST18}
O. Ernst, B. Sprungk, and L. Tamellini. Convergence of sparse collocation for functions of
countably many Gaussian random variables - with application to lognormal elliptic diffusion
problems, SIAM J. Numer. Anal. {\bf 56}(2018), 887-905.


\bibitem{Gr85}
P. Grisvard, Elliptic Problems in Nonsmooth Domains, Monogr. Stud. Math. 24, Pitman
(Advanced Publishing Program), Boston, 1985.

\bibitem{GWZ14}
M. Gunzburger, C. Webster and G. Zang, 
Stochastic finite element methods for partial differential equations with random input data,
Acta Numerica {\bf 23}(2014), 521-650.

\bibitem{HNTT2016a}
A.-L. Haji-Ali, F. Nobile, L. Tamellini, and R. Tempone. Multi-index stochastic collocation
convergence rates for random PDEs with parametric regularity. Found. Comput.  Math., {\bf 16}(6)(2016),1555-1605.
	
\bibitem{HNTT2016b}	
 A.-L. Haji-Ali, F. Nobile, L. Tamellini, and R. Tempone. Multi-Index Stochastic Collocation
for random PDEs. Comput. Methods Appl. Mech. Engrg., {\bf 306}(2016), 95-122.

\bibitem{HS65}
E. Hewitt and K. Stromberg, Real and abstract analysis, Springer, 1965.

\bibitem{HoS14}
V.H. Hoang, Ch. Schwab, 
$N$-term Wiener chaos approximation rate for elliptic PDEs with lognormal Gausian random inputs, Math. Models Methods Applied Sciences, {\bf 24}(2014), 796-826.

\bibitem{Lu07}
D. S. Lubinsky, 
A survey of weighted polynomial approximation with exponential
weights, Surveys in Approximation Theory  {\bf 3}(2007), 1-105.

\bibitem{Mat1994a}
D. M. Matjila, Bounds for the weighted Lebesgue functions for Freud weights, J. Approx.
Theory, {\bf 79}(1994), 385--406.

\bibitem{Mat1994b}
D. M. Matjila, Convergence of Lagrange interpolation for Freud weights in weighted $L_p$, $0 < p \le 1$, in: Nonlinear numerical methods and rational approximation, Kluwer, Dordrecht,
1994, pp. 25--35.

\bibitem{NTW08}
F. Nobile, R. Tempone, C.G. Webster, An anisotropic sparse grid stochastic collocation method for
elliptic partial differential equations with random input data, SIAM J. Numer. Anal. {\bf 46}(2008), 2411-2442.

\bibitem{SG11}
Ch. Schwab and C. Gittelson, Sparse tensor discretizations high-dimensional parametric and stochastic PDEs, 
Acta Numerica {\bf 20}(2011), 291-467.


\bibitem{Sza1997}
J. Szabados, Weighted Lagrange and Hermite-Fej\'er interpolation on the real line, J. Inequal.
and Applns., {\bf 1}(1997), 99-123.

\bibitem{Sz39}
G. Szeg\"o, Orthogonal Polynomials, Amer. Math. Soc. Colloq. Publ., Vol. 23, American Mathematical Society, Providence, Rhode Island, 1939.

\bibitem{Ze18}
J. Zech,
Sparse-Grid  Approximation of High-Dimensional Parametric 
PDEs, Dissertation 25683, ETH Zurich, 2018.

\bibitem{ZDS18}
J. Zech, D. D\~ung and Ch. Schwab,
Multilevel approximation of parametric and stochastic
PDEs, Math. Models  Methods in Applied Sciences {\bf 29}(2019), 1753-1817.

\bibitem{ZS17}
J. Zech and C. Schwab, Convergence rates of high dimensional Smolyak quadrature,
 ESAIM Math. Model. Numer. Anal. 
{\bf 54}(2020), 1259-1307.
\end{thebibliography}
\end{document}